\documentclass[12pt,reqno]{amsart}
\usepackage{amsmath,amssymb,amsthm,amscd,amsfonts,geometry,color}
\newtheorem{theorem}{Theorem}[section]

\newtheorem{lemma}[theorem]{Lemma}
\newtheorem{proposition}[theorem]{Proposition}

\newtheorem{thmA}{Theorem~A}
 
\newtheorem{thmB}{Theorem~B}
 
\newtheorem{thmC}{Theorem~C}

\theoremstyle{definition}

\newcommand{\R}{\mathbb{R}}                       
\newcommand{\N}{\mathbb{N}}                       

\newcommand{\0}{\mathbf{0}}                       

\newcommand{\fin}{\operatorname{Fin}}             
\newcommand{\doubl}{\operatorname{D}}             

\newcommand{\conti}{\operatorname{C}}             
\newcommand{\uc}{\operatorname{UC}}             
\newcommand{\lip}{\operatorname{LIP}}             
\newcommand{\lipc}{\operatorname{lip}}            
\newcommand{\pseudo}{\operatorname{PM}}             
\newcommand{\metr}{\operatorname{M}}             
\newcommand{\adm}{\operatorname{AM}}             
\newcommand{\upm}{\operatorname{UPM}}             
\newcommand{\lpm}{\operatorname{LPM}}             

\begin{document}

\title[Banach-Stone type theorems on pseudometrics]{Banach-Stone type theorems on uniformly continuous and lipschitz continuous pseudometrics}
\author{Katsuhisa Koshino}
\address[Katsuhisa Koshino]{Faculty of Engineering, Kanagawa University, 3-27-1 Rokkakubashi, Kanagawa-ku, Yokohama-shi, 221-8686, Japan}
\email{ft160229no@kanagawa-u.ac.jp}
\subjclass[2020]{Primary 46B04; Secondary 46E15, 54C35, 54D35, 54E15, 54E35}
\keywords{isometric, pseudometric, uniformly continuous, lipschitz continuous, sup-metric, the Banach-Stone theorem, the Smirnov (Samuel) compactification}
\maketitle

\begin{abstract}
In this paper, we shall establish Banach-Stone type theorems on spaces of uniformly continuous and lipschitz continuous pseudometrics.
\end{abstract}

\section{Introduction}

The Banach-Stone theorem \cite{Banac,MHSto} asserts that metric structures of spaces of continuous functions determine the topologies on their underlying spaces.
This is one of the most important results and has many developments in functional analysis, see \cite{FJ,GJ1,CJV} for the history and some variations.
Throughout the paper, spaces $Z = (Z,\mathcal{U}_Z)$ are uniform spaces,
 which are topologized by their uniformities $\mathcal{U}_Z$.
An isometry is a surjective isometry,
 and the symbols $\R$ and $\N$ stand for the set of real numbers and the one of positive integers, respectively.
We let $\conti(Z) = (\conti(Z),\|\cdot\|)$ be the space of real-valued bounded continuous functions on $Z$ with the sup-norm $\|\cdot\|$.
In the case where $Z = (Z,d_Z)$ is a metric space with an admissible metric $d_Z$, let $\uc(Z,d_Z) \subset \conti(Z)$ be the subspace consisting of uniformly continuous functions.
In the paper \cite{Her} (cf.~\cite{AF}), a Banach-Stone type theorem, which states that the metric structure of $\uc(Z,d_Z)$ characterizes the uniform structure of $Z$,
 was obtained as follows:

\begin{theorem}
Suppose that $X = (X,d_X)$ and $Y = (Y,d_Y)$ are complete metric spaces.
The following are equivalent:
\begin{enumerate}
 \item $X$ and $Y$ are uniformly homeomorphic;
 \item there exists an isometry $T : \uc(X,d_X) \to \uc(Y,d_Y)$.
\end{enumerate}
In this case, for each isometry $T : \uc(X,d_X) \to \uc(Y,d_Y)$, there exists a uniform homeomorphism $\phi : Y \to X$ and $\alpha \in \uc(Y,d_Y)$ with $\alpha(Y) \subset \{1,-1\}$ such that for any $f \in \uc(X,d_X)$ and for any $y \in Y$,
 $$T(f)(y) = \alpha(y)f(\phi(y)).$$
\end{theorem}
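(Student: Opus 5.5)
The plan is to reduce the statement to the classical Banach-Stone theorem for the Smirnov (Samuel) compactification. For a metric space $(Z,d_Z)$, the algebra $\uc(Z,d_Z)$ is a unital, closed subalgebra of $\conti(Z)$ that separates points from closed sets, and it is closed under lattice operations. Let $sZ$ be its maximal ideal space, that is, the Samuel compactification. Restriction then gives an isometric isomorphism $\conti(sZ)\cong\uc(Z,d_Z)$. Consequently, an isometry $T:\uc(X,d_X)\to\uc(Y,d_Y)$ induces an isometry $\conti(sX)\to\conti(sY)$.

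Before applying Banach-Stone we normalize $T$. By the Mazur--Ulam theorem, $T-T(0)$ is a linear isometry, so we may assume $T$ is linear; a representation of the stated form then requires $T(0)=0$. The classical Banach-Stone theorem yields a homeomorphism $\Phi:sY\to sX$ and a continuous $\alpha:sY\to\{1,-1\}$ such that
$$T(f)(y)=\alpha(y)f(\Phi(y)).$$
Put $\alpha$ restricted to $Y$; it lies in $\uc(Y,d_Y)$ because $\alpha=T(1)$.

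The main obstacle is showing that $\Phi$ maps $Y$ onto $X$. This is where completeness enters. I would characterize points of $Z$ inside $sZ$ intrinsically: for complete $Z$, a point $p\in sZ$ lies in $Z$ if and only if $p$ has a countable neighbourhood base in $sZ$, i.e.\ $p$ is a $G_\delta$-point.

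The two directions of the characterization go as follows.
\begin{itemize}
\item For $z\in Z$, the functions $\max(0,1-n\,d_Z(z,\cdot))$ are uniformly continuous. Their cozero sets form a countable local base at $z$ in $sZ$, since $Z$ is open in $sZ$ when $Z$ is locally compact. In general, one uses instead that $\{z\}=\bigcap_n \overline{B(z,1/n)}^{sZ}$.
\item For $p\in sZ\setminus Z$, take a net in $Z$ converging to $p$. Because $Z$ is complete, this net is not Cauchy, so there is $\varepsilon>0$ with the net frequently $\varepsilon$-apart. From this one builds, for any countable family of neighbourhoods of $p$, uniformly separated sequences. Using uniformly continuous bump functions supported on disjoint $\varepsilon/3$-balls, one shows that $p$ is not a $G_\delta$-point. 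This is the standard fact that remote points of the Samuel compactification of a complete space are not $G_\delta$.
\end{itemize}
Since $\Phi$ is a homeomorphism, it preserves $G_\delta$-points, and hence $\Phi(Y)=X$. Set $\phi=\Phi|_Y$.

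Finally, we prove that $\phi$ and $\phi^{-1}$ are uniformly continuous. For each $g\in\uc(X,d_X)$, the composite $g\circ\phi=\alpha\,T(g)$ is uniformly continuous on $Y$. A map between metric spaces is uniformly continuous provided every bounded uniformly continuous real function composed with it is uniformly continuous. If $\phi$ were not uniformly continuous, there would be sequences $y_n,y_n'$ with $d_Y(y_n,y_n')\to0$ but $d_X(\phi(y_n),\phi(y_n'))\ge\varepsilon$. Passing to a subsequence where the points $\phi(y_n)$ are uniformly discrete, or converge, one finds a $g$ equal to $1$ near $\phi(y_n)$ and $0$ at $\phi(y_n')$, which contradicts uniform continuity of $g\circ\phi$. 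The symmetric argument using $T^{-1}$ handles $\phi^{-1}$. This proves (2)$\Rightarrow$(1) together with the representation. For (1)$\Rightarrow$(2), given a uniform homeomorphism $\phi$, the map $f\mapsto f\circ\phi$ is a linear isometry.
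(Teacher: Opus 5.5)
Your proposal is correct and follows essentially the route the paper relies on. The paper only cites \cite{Her} for this statement; in proving Theorem~A it likewise passes to the Smirnov compactification, applies the compact-space result, and restricts to $Y$ via Hern\'andez's argument and Efremovi\v{c}'s proximity theorem, and your countable-character characterization of $Z$ in $sZ$ (using completeness) and your sequence argument for uniform continuity make those cited steps explicit.

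The one step to tighten is the uniformly discrete case at the end. Making $(\phi(y_n))$ uniformly discrete is not enough: if, say, $\phi(y_n)=n$ and $\phi(y_n')=n+1$ in $X=\R$, no function is $1$ at every $\phi(y_n)$ and $0$ at every $\phi(y_n')$. You must first pass, by a Ramsey-type extraction (this is exactly Efremovi\v{c}'s lemma), to an infinite index set $N$ with $r=\inf_{n,m\in N}d_X(\phi(y_n),\phi(y_m'))>0$. Then take $g=\min\{1,d_X(\cdot,\{\phi(y_m')\mid m\in N\})/r\}$.
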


In this paper, we shall establish Banach-Stone type theorems on spaces of uniformly continuous and lipschitz continuous metrics.
Let $\pseudo(Z) \subset \conti(Z^2)$ be the space consisting of pseudometrics on a space $Z$,
 and let $\adm(Z) \subset \pseudo(Z)$ be the subspace of admissible metrics.
Set
\begin{align*}
 {\rm Pc}(Z) = \{d \in \pseudo(Z) \mid &\text{ there are a doubleton} \{z,w\} \subset Z \text{ and a compact subset } K \subset Z\\
 &\ \ \ \ \text{ such that } d(z,w) = \|d\| \text{ and if } d(x,y) = \|d\|,
 \text{ then } x, y \in K\},
\end{align*}
 $${\rm Pp}(Z) = \{d \in \pseudo(Z) \mid \text{ there uniquely exists } \{z,w\} \subset Z \text{ such that } d(z,w) = \|d\|\}.$$
Remark that ${\rm Pp}(Z) \subset {\rm Pc}(Z)$,
 and that $\pseudo(Z) = {\rm Pc}(Z)$ if $Z$ is compact.
We denote either ${\rm Pc}(Z)$ or ${\rm Pp}(Z)$ by ${\rm P}(Z)$.
M.E.~Shanks \cite{Sh} established a Banach-Stone type theorem on spaces of pseudometrics and admissible metrics of compact metrizable spaces,
 and the author \cite{Kos29} generalized his result and described isometries by using homeomorphisms as follows:

\begin{theorem}\label{isometry}
Let $X$ and $Y$ be metrizable spaces.
The following are equivalent:
\begin{enumerate}
 \item $X$ and $Y$ are homeomorphic;
 \item there exists an isometry $T : \pseudo(X) \to \pseudo(Y)$ with $T({\rm P}(X) \cap \adm(X)) = {\rm P}(Y) \cap \adm(Y)$;
 \item there exists an isometry $T : \adm(X) \to \adm(Y)$ with $T({\rm P}(X) \cap \adm(X)) = {\rm P}(Y) \cap \adm(Y)$;
 \item there exists an isometry $T : {\rm P}(X) \cap \adm(X) \to {\rm P}(Y) \cap \adm(Y)$.
\end{enumerate}
In this case, for each isometry $T : \pseudo(X) \to \pseudo(Y)$ with $T({\rm P}(X) \cap \adm(X)) = {\rm P}(Y) \cap \adm(Y)$, there is a homeomorphism $\phi : Y \to X$ such that for any $d \in \pseudo(X)$ and for any $x, y \in Y$,
 $$T(d)(x,y) = d(\phi(x),\phi(y)).$$
Except for the case where the cardinality of $X$ or $Y$ is equal to $2$, the homeomorphism $\phi$ can be chosen uniquely.
\end{theorem}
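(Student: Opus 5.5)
The implications $(1)\Rightarrow(2)\Rightarrow(3)\Rightarrow(4)$ are essentially formal. Given a homeomorphism $\phi : Y \to X$, define $T(d)(x,y) = d(\phi(x),\phi(y))$. Then $T$ is a surjective isometry $\pseudo(X)\to\pseudo(Y)$ for the sup-metric. Since $\phi$ is a homeomorphism, $T$ preserves admissibility. It also preserves the conditions defining ${\rm Pc}$ and ${\rm Pp}$, because compact sets correspond to compact sets and the doubleton realizing $\|d\|$ is transported by $\phi$. Restricting $T$ gives (3) and (4). The substance is therefore $(4)\Rightarrow(1)$ together with the representation formula. My plan is to reconstruct points and doubletons of $X$ from the metric geometry of ${\rm P}(X)\cap\adm(X)$, and then show that the induced point map is a homeomorphism.

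\textbf{Encoding doubletons metrically.} For a doubleton $\{z,w\}\subset X$, consider the set of $d\in{\rm P}(X)\cap\adm(X)$ attaining their norm at $\{z,w\}$. I would try to characterize these classes purely in terms of the sup-metric. The natural tool is a ``peak'' characterization in the spirit of Banach-Stone proofs via extreme points: for $d,e\in{\rm P}(X)\cap\adm(X)$, one should show that $\|d-e\|$ together with the norms $\|d\|$ and $\|e\|$ detects whether $d$ and $e$ peak at a common doubleton. The main candidate is an additivity-type condition, for example whether $\|d+e\|=\|d\|+\|e\|$ holds, or some metric midpoint/intersection-of-balls substitute for it, since $T$ need not be linear. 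The point of working in ${\rm P}$ is that maxima are attained on compact sets, so sums of two metrics peaking at the same pair actually attain $\|d\|+\|e\|$, while metrics peaking at ``far'' pairs do not. Since $T$ is only an isometry and not additive, I would first normalize by showing that $T$ preserves norms, or at least that $T$ maps the relevant maximal families (cliques under the detected relation) onto maximal families. This yields a bijection $\Phi$ between doubletons of $Y$ and doubletons of $X$.

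\textbf{From doubletons to points and the formula.} Two doubletons share a point iff a certain triangle configuration exists: metrics peaking near $\{z,w\}$ and near $\{z,v\}$ can be glued. Using this, $\Phi$ induces a point bijection $\phi:Y\to X$ once $|X|\ge3$; the two-point case is handled separately and explains the non-uniqueness remark. Next I would prove $T(d)(x,y)=d(\phi(x),\phi(y))$. Perturbing $d$ by adding a small bump metric peaking at $\{\phi(x),\phi(y)\}$ and tracking norms under the isometry should let one read off the values $d(\phi(x),\phi(y))$ as limits of norms. Finally, continuity of $\phi$ and $\phi^{-1}$ follows from admissibility: $T(d)$ is admissible on $Y$ and equals $d\circ(\phi\times\phi)$, so $\phi$ maps convergent sequences to convergent sequences in both directions. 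Under hypothesis (2), the formula extends to all of $\pseudo(X)$ by density and continuity of $T$.

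\textbf{Main obstacle.} The hardest step is the metric characterization of ``peaking at the same doubleton'' using only distances, since $T$ is not assumed linear. I would first try a Mazur--Ulam style argument: show that $T$ extends to, or is affine on, suitable convex pieces, using the fact that the relevant sets are convex cones inside $\conti(X^2)$. If that fails, I would fall back on an intersection-of-balls characterization of the sets $\{d : d(z,w)=\|d\|\}$.
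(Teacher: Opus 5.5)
Your outline (doubletons, then points, then the formula, then continuity) has the right shape. The construction for (1)$\Rightarrow$(2),(3),(4) and the sequential continuity argument at the end are fine. Note that (2)$\Rightarrow$(3) is not formal, but (2)$\Rightarrow$(4) and (3)$\Rightarrow$(4) by restriction suffice.

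The genuine gap is the central step: you never actually produce the bijection $\Phi:\doubl(Y)\to\doubl(X)$, and the tools you name would not produce it. Your main candidate $\|d+e\|=\|d\|+\|e\|$ cannot be transported, because $T$ is not additive. A Mazur--Ulam argument also does not make $T$ affine on convex pieces of a cone. What one can get, and all one needs, is $T(\0)=\0$ (hence $\|T(d)\|=\|d\|$) and $T(td)=tT(d)$. Even these are not available on ${\rm P}(X)\cap\adm(X)$ itself, which contains no $\0$ and is not a cone. The missing first move is that ${\rm P}(X)\cap\adm(X)$ is dense in the complete space $\pseudo(X)$. So an isometry as in (4) extends to an isometry $\pseudo(X)\to\pseudo(Y)$ that still maps ${\rm P}(X)\cap\adm(X)$ onto ${\rm P}(Y)\cap\adm(Y)$. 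That is (4)$\Rightarrow$(2), and the peaking argument is then run on the full cone, as in Section~\ref{peak} (following \cite{Kos29}).

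You say $\|d-e\|$ and the norms should detect common peaks; no symmetric test of that kind exists. The usable relation is one-directional: $\|d-e\|=\|d\|-\|e\|$. If $d(p)=\|d\|$, then $\|d\|-e(p)\le\|d-e\|$ forces $e(p)=\|e\|$, and this relation survives a norm-preserving isometry. Take $d=\frac1n\sum_i d_i$ and $e=d_i/n$ with all $d_i$ peaking at $\{x,y\}$. Since $T(d)\in{\rm P}(Y)$ attains its norm at some $\{z,w\}$, homogeneity gives $T(d_i)(z,w)=\|T(d_i)\|$ for every $i$. So the peak sets $\mathcal{F}(Y,T(d_i))$ have the finite intersection property.

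This is where ${\rm P}$ really enters: images attain their norms, and peak sets are compact, so the whole family meets. Your stated reason, that metrics with a common peak satisfy $\|d+e\|=\|d\|+\|e\|$, holds trivially and is not the point.

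Showing that the intersection is a single doubleton, and proving the formula, both need a strict-peaking lemma as in Lemma~\ref{transl.}. For each $d$ and each $x\neq y$ there is an admissible $\rho$ with $\rho(x,y)=\|\rho\|$ such that $d+\rho$ attains its norm only at $\{x,y\}$. This $\rho$ must be large, not a small bump: a small perturbation of $d$ still peaks where $d$ nearly peaks, so norms never reveal the value at a prescribed pair.

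With $\sigma=T(d)+\rho$, $T^{-1}(\sigma)$ peaks at $\Phi(\{x,y\})$ with value $\|\sigma\|=\sigma(x,y)$. Then $\sigma(x,y)-d(\Phi(\{x,y\}))\le\|T^{-1}(\sigma)-d\|=\|\rho\|=\rho(x,y)$, which gives $T(d)(x,y)\le d(\Phi(\{x,y\}))$; the reverse inequality is symmetric.

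Only after this formula does the point map appear. It comes from the triangle inequality for $T(d)$, with $d$ equal to $1,1,3$ on $\Phi(\{y,z_1\}),\Phi(\{y,z_2\}),\Phi(\{z_1,z_2\})$. Your ``gluing'' criterion for shared points, placed before the formula, has no mechanism behind it. Uniqueness of $\phi$ for $|X|\neq 2$ is also left unaddressed, though it is easy.
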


A pseudometric $d$ on $Z$ is \textit{uniformly continuous} with respect to a uniformity $\mathcal{U}_Z$ (respectively, an admissible metric $d_Z$) of $Z$ provided that for each $\epsilon > 0$, there is an entourage $U \in \mathcal{U}_Z$ (respectively, a positive number $\delta > 0$) such that if $(x,y) \in U$ (respectively, $d_Z(x,y) < \delta$),
 then $d(x,y) < \epsilon$.
Furthermore, we say that $d$ is \textit{lipschitz continuous} with respect to $d_Z$ if there exists $k > 0$ such that for every pair $(x,y) \in Z^2$ with $x \neq y$, $d(x,y) \leq kd_Z(x,y)$,
 where
 $$\lipc(d) = \sup_{(x,y) \in Z^2 \text{ with } x \neq y} \frac{d(x,y)}{d_Z(x,y)}$$
 is called the \textit{lipschitz constant} of $d$.
Let $\upm(Z,\mathcal{U}_Z)$ (respectively, $\upm(Z,d_Z)$) be the space consisting of bounded uniformly continuous pseudometrics on $Z$ with respect to $\mathcal{U}_Z$ (respectively, $d_Z$),
 and let $\lpm(Z,d_Z)$ be the one consisting of bounded lipschitz continuous pseudometrics on $Z$ with respect to $d_Z$.
Moreover, for positive number $k > 0$, put
 $$\lpm_k(Z,d_Z) = \{d \in \lpm(Z,d_Z) \mid \lipc(d) < k\},$$
 $$\overline{\lpm}_k(Z,d_Z) = \{d \in \lpm(Z,d_Z) \mid \lipc(d) \leq k\}.$$
Clearly, $\lpm_k(Z,d_Z) \subset \overline{\lpm}_k(Z,d_Z)$.
By ${\rm U}(Z,d_Z)$, denote the space consisting of $d \in \pseudo(Z)$ which satisfies the following:
 \begin{itemize}
  \item For every $\epsilon > 0$, there exist a finite set $F \subset \uc(Z,d_Z)$ and $\delta > 0$ such that if $|f(x) - f(y)| < \delta$ for any $f \in F$,
  then $d(x,y) < \epsilon$.
 \end{itemize}
We shall show the following:

\begin{thmA}
Let $X = (X,\mathcal{U}_X)$ and $Y = (Y,\mathcal{U}_Y)$ be spaces.
The following are equivalent:
\begin{enumerate}
 \item $X$ and $Y$ are uniformly homeomorphic;
 \item there is an isometry $T : \upm(X,\mathcal{U}_X) \to \upm(Y,\mathcal{U}_Y)$ with $T({\rm P}(X) \cap \upm(X,\mathcal{U}_X)) = {\rm P}(Y) \cap \upm(Y,\mathcal{U}_X)$.
\end{enumerate}
Moreover, if $X = (X,d_X)$ and $Y = (Y,d_Y)$ be complete metric spaces,
 the following is also equivalent to the above.
\begin{enumerate}
\setcounter{enumi}{2}
 \item there exists an isometry $T : {\rm U}(X,d_X) \to {\rm U}(Y,d_X)$.
\end{enumerate}
In these cases, for every isometry $T : \upm(X,\mathcal{U}_X) \to \upm(Y,\mathcal{U}_Y)$ with $T({\rm P}(X) \cap \upm(X,\mathcal{U}_X)) = {\rm P}(Y) \cap \upm(Y,\mathcal{U}_X)$ (respectively, $T : {\rm U}(X,d_X) \to {\rm U}(Y,d_Y)$), there exists a uniform homeomorphism $\phi : Y \to X$ such that for any $d \in \upm(X,\mathcal{U}_X)$ (respectively, $d \in {\rm U}(X,d_X)$) and for any $x, y \in Y$,
 $$T(d)(x,y) = d(\phi(x),\phi(y)).$$
Except for the case where the cardinality of $X$ or $Y$ is equal to $2$, the map $\phi$ can be chosen uniquely.
\end{thmA}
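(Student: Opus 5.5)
The plan is to reduce Theorem~A to the compact case of Theorem~\ref{isometry} through the Smirnov (Samuel) compactification. Let $sX$ be the Samuel compactification of $(X,\mathcal{U}_X)$, i.e.\ the completion of $X$ with respect to the precompact uniformity generated by bounded uniformly continuous real functions. The first key observation is a natural isometric correspondence
$$\upm(X,\mathcal{U}_X) \cong \text{the subspace of } \pseudo(sX) \text{ of pseudometrics satisfying a density condition},$$
sending $d$ to its unique continuous extension $\tilde d$ on $(sX)^2$. To justify it, note that every bounded uniformly continuous pseudometric $d$ makes $z\mapsto d(z,w)$ a family of uniformly equicontinuous bounded functions. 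Hence $d$ is uniformly continuous with respect to the precompact reflection of $\mathcal{U}_X$: cover $(X,d)$ by finitely many $d$-small balls, which is possible because $d$ is totally bounded for that uniformity. Therefore $d$ extends continuously to $sX\times sX$. Since $X$ is dense in $sX$ and $\|\tilde d\| = \|d\|$, the map is an isometry onto its image. For ${\rm U}(X,d_X)$, the defining condition says exactly that $d$ is uniformly continuous with respect to the uniformity generated by finitely many functions in $\uc(X,d_X)$, which is the precompact reflection. Thus ${\rm U}(X,d_X)$ corresponds to all of $\pseudo(sX)$ restricted to $X$.

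For (3)$\Rightarrow$(1), apply Theorem~\ref{isometry} on the compact space $sX$, where $\pseudo(sX)={\rm Pc}(sX)$. The condition on admissible metrics has to be handled separately. The isometry $T:{\rm U}(X,d_X)\to{\rm U}(Y,d_Y)$ induces an isometry $\pseudo(sX)\to\pseudo(sY)$, and I would show it automatically preserves ${\rm P}\cap\adm$. This should follow from the metric characterization used in \cite{Kos29}: points of $sX$ correspond to maximal families of pseudometrics attaining their norm at a common pair. Theorem~\ref{isometry} then yields a homeomorphism $\tilde\phi: sY\to sX$ with $T(d)(x,y)=\tilde d(\tilde\phi(x),\tilde\phi(y))$. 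The main obstacle is showing that $\tilde\phi(Y)=X$. For this I would use completeness: points of $X$ are exactly the points of $sX$ having a countable neighborhood base, in the sense of being $G_\delta$ points. This is a known property of Samuel compactifications of complete metric spaces, and it is preserved by homeomorphisms. The restriction $\phi=\tilde\phi|_Y$ is a homeomorphism which extends to the Samuel compactifications, so it is a uniform homeomorphism.

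For (2)$\Rightarrow$(1) with general uniform spaces, the remainder points cannot be detected in this way. Instead I would work directly with the isometry, repeating the argument of \cite{Kos29}. For each pair $\{z,w\}\subset Y$, consider the pseudometrics in ${\rm P}(Y)\cap\upm$ peaking exactly at $\{z,w\}$. The hypothesis $T({\rm P}\cap\upm)={\rm P}\cap\upm$, combined with the distance formula $\|d_1-d_2\|$, should force these families to correspond to peak pairs in $X$. This identifies doubletons, and hence points when $|X|\ne 2$, and so produces a bijection $\phi$ with $T(d)=d\circ(\phi\times\phi)$. Continuity of $\phi$ in both directions would follow from the formula: $T$ maps $\upm(X)$ onto $\upm(Y)$, and the uniformity is generated by its uniformly continuous pseudometrics, so $\phi$ and $\phi^{-1}$ are uniformly continuous. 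Uniqueness follows from the peak-point identification when the cardinality is not $2$.

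The implications (1)$\Rightarrow$(2) and (1)$\Rightarrow$(3) are routine via $d\mapsto d\circ(\phi\times\phi)$, checking that composition preserves uniform continuity, the classes ${\rm Pc}$ and ${\rm Pp}$, and the finite-$F$ condition. The expected hardest step is the peak-pair identification in the noncompact case, since there norms need not be attained.
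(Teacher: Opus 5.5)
Your overall architecture matches the paper's:
\begin{itemize}
\item composition operators for (1)$\Rightarrow$(2),(3);
\item the peaking argument plus ``the uniformity is generated by its bounded uniformly continuous pseudometrics'' for (2)$\Rightarrow$(1);
\item the identification ${\rm U}(X,d_X)=\pseudo(sX)$ followed by restriction for (3)$\Rightarrow$(1).
\end{itemize}
However, the (3)$\Rightarrow$(1) step fails as written. Theorem~\ref{isometry} concerns \emph{metrizable} spaces, and its proof runs on admissible peaking metrics built from $d_Z$. The Samuel compactification $sX$ of a complete metric space is metrizable only when $X$ is totally bounded, i.e.\ compact. Indeed, an $\epsilon$-separated sequence in $X$ is uniformly discrete, and each of its subsets is separated from its complement by a bounded uniformly continuous function. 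Hence its closure in $sX$ is a copy of $\beta\N$. So in general $\adm(sX)=\emptyset$, the requirement that $T$ preserve ${\rm P}\cap\adm$ is vacuous, and Theorem~\ref{isometry} yields nothing. Your remark about maximal families of norm-attaining pseudometrics does not repair this.

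The fix is what the paper does: run the uniform-space peaking argument (your own (2)$\Rightarrow$(1)) on the compact uniform spaces $sX$ and $sY$. There $\upm(sX)=\pseudo(sX)={\rm Pc}(sX)$, as you already noted. So with ${\rm P}={\rm Pc}$ the hypothesis $T({\rm P}\cap\upm)={\rm P}\cap\upm$ holds for free, and no admissible metrics are needed. After that, your characterization of $X$ as the $G_\delta$-points of $sX$ (valid for complete $X$) is correct. It is a clean, explicit replacement for the paper's appeal to Hern\'andez. The final step, that a homeomorphism $Y\to X$ which extends to the Samuel compactifications is uniformly continuous, is Efremovi\v{c}'s theorem, which the paper also invokes.

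Second, your ``first key observation'' is false. A bounded uniformly continuous pseudometric need not be uniformly continuous for the precompact reflection. On $\N$ with the $0$--$1$ metric, that metric lies in $\upm$ but has no continuous extension to $s\N=\beta\N$. Such an extension would vanish at $(p,p)$ for $p\in\beta\N\setminus\N$, yet equal $1$ at off-diagonal pairs $(m,n)$ in every neighborhood of $(p,p)$. Your covering argument presupposes that $(X,d)$ is totally bounded, which is exactly what fails. By Proposition~\ref{unif.}, the pseudometrics that do extend are precisely those in ${\rm U}(X,\mathcal{U}_X)$, which is why (2) cannot be pushed to the compactification. You do not actually use this claim for (2)$\Rightarrow$(1), so simply drop it.

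Also, in (2)$\Rightarrow$(1) your ``should force'' hides the real content of Section~3:
\begin{itemize}
\item $T(\0)=\0$ and positive homogeneity (via Mazur--Ulam);
\item the finite intersection property of $\{\mathcal{F}(Y,T(d))\}$, with nonemptiness of the intersection coming from the compact set $K$ in ${\rm Pc}$ (or uniqueness in ${\rm Pp}$);
\item above all, Lemma~\ref{transl.}, which adds to an arbitrary $d$ a pseudometric making the peak at a prescribed pair strict. This is what pins down a single doubleton and gives $T(d)(x,y)=d(\Phi(\{x,y\}))$;
\item a triangle-inequality argument to pass from doubletons to points.
\end{itemize}
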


Furthermore, we will prove the following theorems on lipschitz continuous pseudometrics:

\begin{thmB}
Suppose that $X = (X,d_X)$ and $Y = (Y,d_Y)$ are bounded metric spaces.
The following are equivalent:
\begin{enumerate}
 \item $X$ and $Y$ are bi-lipschitz;
 \item there exists an isometry $T : \lpm(X,d_X) \to \lpm(Y,d_Y)$ with $T({\rm P}(X) \cap \lpm(X,d_X)) = {\rm P}(Y) \cap \lpm(Y,d_Y)$.
\end{enumerate}
In this case, for each isometry $T : \lpm(X,d_X) \to \lpm(Y,d_Y)$ with $T({\rm P}(X) \cap \lpm(X,d_X)) = {\rm P}(Y) \cap \lpm(Y,d_Y)$, there exists a bi-lipschitz homeomorphism $\phi : Y \to X$ such that for all $d \in \lpm(X,d_X)$ and for all $x, y \in Y$,
 $$T(d)(x,y) = d(\phi(x),\phi(y)).$$
Except for the case that the cardinality of $X$ or $Y$ is equal to $2$, the map $\phi$ can be chosen uniquely.
\end{thmB}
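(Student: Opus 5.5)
The direction (1)$\Rightarrow$(2) is direct. Given a bi-lipschitz homeomorphism $\phi : Y \to X$, we define $T(d)(x,y) = d(\phi(x),\phi(y))$. Since $\phi$ is lipschitz, $T(d)$ is a bounded lipschitz pseudometric on $Y$. Since $\phi^{-1}$ is lipschitz, the inverse map $d \mapsto d(\phi^{-1}(\cdot),\phi^{-1}(\cdot))$ carries $\lpm(Y,d_Y)$ back to $\lpm(X,d_X)$. Because $\phi$ is a bijection, $T$ preserves the sup-norm distance, so it is a surjective isometry. The map $\phi$ is a homeomorphism, so it sends doubletons realizing $\|d\|$ to doubletons realizing $\|T(d)\|$ and compact sets to compact sets. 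Hence $T$ preserves both ${\rm Pc}$ and ${\rm Pp}$.

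For (2)$\Rightarrow$(1) and the representation formula, I will follow the scheme behind Theorem~\ref{isometry} from \cite{Kos29}, working inside $\lpm$ instead of $\pseudo$. The key steps are as follows.

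\textbf{Step (a).} For a doubleton $\{z,w\} \subset X$, I consider the set $\mathcal{P}_{z,w}$ of $d \in {\rm P}(X) \cap \lpm(X,d_X)$ whose maximum is attained at $\{z,w\}$. I will characterize these sets metrically, for instance as maximal subsets of ${\rm P}(X)\cap\lpm$ on which $\|d - d'\|$ behaves like $|\|d\| - \|d'\||$ in a suitable sense, or by using that $\|d+e\|$-type norm relations are metric invariants. This yields a bijection between doubletons of $X$ and of $Y$. The needed test pseudometrics must be lipschitz, and here boundedness of $d_X$ is used. Typical examples are $d_X$ itself, truncations $\min\{d_X, c\}$, and pseudometrics of the form $|f(x)-f(y)|$ with $f$ lipschitz, such as $f = d_X(z,\cdot)$.

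\textbf{Step (b).} From the doubleton correspondence I extract a bijection $\phi : Y \to X$ with $T(d)(x,y) = d(\phi(x),\phi(y))$. For $|X| \neq 2$, points correspond to families of doubletons sharing a common element. The value formula then comes from evaluating at the attained maxima and perturbing with lipschitz bumps. I also expect to normalize $T$ by $T(\0) = \0$, which should follow from the isometry structure, e.g.\ $\0$ is the unique element at distance $\|d\|$ from each $d$. Uniqueness of $\phi$ away from cardinality $2$ follows as in Theorem~\ref{isometry}.

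\textbf{Step (c).} With the formula in hand, bi-lipschitzness follows quickly. Apply it to $d = d_X \in \lpm(X,d_X)$. Then $T(d_X)(x,y) = d_X(\phi(x),\phi(y))$ lies in $\lpm(Y,d_Y)$, so it is bounded by $\lipc(T(d_X))\,d_Y(x,y)$, and $\phi$ is lipschitz. Applying the same argument to $T^{-1}$ and $d_Y$ shows that $\phi^{-1}$ is lipschitz.

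I expect Step (a) to be the main obstacle: showing that the metric notions used in \cite{Kos29} survive restriction to lipschitz pseudometrics. The constructions there use arbitrary continuous pseudometrics peaking at prescribed pairs, and these must be replaced by lipschitz ones. My first attempt will be to use pseudometrics of the form $\min\{d_X(x,y), \; |g(x)-g(y)| + \dots\}$, where $g$ is a lipschitz cone function $\max\{0, r - d_X(z,\cdot)\}$. These are lipschitz and can be arranged to attain their maximum uniquely at a chosen pair, so they lie in ${\rm Pp}$.
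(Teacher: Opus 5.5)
Your direction (1)$\Rightarrow$(2) and your Step (c) are correct and match the paper: Proposition~\ref{oper.}(2), and $\lambda=\max\{\lipc(T(d_X)),\lipc(T^{-1}(d_Y))\}$ applied to the composition formula. The gap is in Steps (a)--(b), which carry all the real content of Section~\ref{peak}. There you only gesture at an argument, and the concrete ingredients you offer do not work.

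\begin{itemize}
\item Your normalization ``$\0$ is the unique element at distance $\|d\|$ from each $d$'' is circular, because transporting it through $T$ already requires $\|T(d)\|=\|d\|$. What works is positivity. For nonnegative $f,g$ one has $\|f-g\|\le\max\{\|f\|,\|g\|\}$, so $\|\rho\|>\|T^{-1}(\0)\|$ forces $\|T^{-1}(\rho)\|\ge\|\rho\|$. If $c=T(\0)\neq\0$, take $\rho=tc$ with $t$ large; then $t\|c\|\le\|T^{-1}(tc)\|=\|T^{-1}(tc)-T^{-1}(c)\|=(t-1)\|c\|$, a contradiction.
\item Your suggested characterization of $\mathcal{P}_{z,w}$ as a maximal set on which $\|d-d'\|$ equals $|\|d\|-\|d'\||$ fails as stated: two distinct elements of $\mathcal{P}_{z,w}$ with equal norm violate it.
\item $\min\{d_X,\sigma\}$ of two pseudometrics is in general not a pseudometric.
\item You never say where the hypothesis on ${\rm P}$ enters, and it must enter substantively. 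The restriction map $\lpm([0,1])\to\lpm((0,1])$ (usual metrics) is a surjective isometry over non-homeomorphic bounded spaces. It is excluded only because $d_{[0,1]}\in{\rm Pp}([0,1])$ restricts to a pseudometric that does not attain its supremum.
\end{itemize}

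The missing mechanism is a one-sided relation: if $\|d-d'\|=\|d\|-\|d'\|$, then $d'$ attains its norm at every pair where $d$ does. Suppose $d_1,\dots,d_n\in{\rm P}(X)\cap\lpm(X,d_X)$ all peak at $\{x,y\}$. Then $d=\sum_i d_i$ and each $d-d_i$ peak there too, and $d\in{\rm P}(X)$. By the hypothesis, $T(d)$ attains its norm at some $\{z,w\}$. Since $\|T(d)-T(d_i)\|=\|d\|-\|d_i\|=\|T(d)\|-\|T(d_i)\|$, this forces $T(d_i)(z,w)=\|T(d_i)\|$. This is Lemma~\ref{f.i.p.}; the paper averages and uses Proposition~\ref{scalar}, but in the cone $\lpm$ the sum suffices. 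The same holds for $T^{-1}$. Compactness of peak sets (for ${\rm Pc}$) or their uniqueness (for ${\rm Pp}$) then makes $\bigcap\{\mathcal{F}(X,T^{-1}(\rho))\mid\rho\in\mathcal{P}(Y,\{x,y\})\cap{\rm P}(Y)\}$ nonempty.

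To make this intersection a single doubleton $\Phi(\{x,y\})$, to get bijectivity, and to get $T(d)(x,y)=d(\Phi(\{x,y\}))$, you need the translation Lemma~\ref{transl.}. It says: for each $\sigma\in\lpm(Y,d_Y)$ and $x\neq y$, there is a lipschitz $\rho$ with $\rho(x,y)=\|\rho\|$ such that $\sigma+\rho$ peaks only at $\{x,y\}$. With $\sigma=T(d)$, the identity $\|T^{-1}(T(d)+\rho)-d\|=\|\rho\|=\rho(x,y)$ gives $T(d)(x,y)\le d(\Phi(\{x,y\}))$, and the reverse inequality follows symmetrically.

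Your tent idea does supply this lemma in the lipschitz case, and more simply than the paper's annulus construction, which is built to cover the uniformly continuous case too. Set $2r=d_Y(x,y)$ and $f=K(r-d_Y(x,\cdot))_+-K(r-d_Y(y,\cdot))_+$. Then $\rho(u,v)=|f(u)-f(v)|$ is a $K$-lipschitz pseudometric. Choosing $K>\max\{\lipc(\sigma),2\|\sigma\|/d_Y(x,y)\}$ makes $\sigma+\rho$ peak only at $\{x,y\}$.
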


\begin{thmC}
Let $X = (X,d_X)$ and $Y = (Y,d_Y)$ be bounded metric spaces,
 and let $k > 0$.
The following are equivalent:
\begin{enumerate}
 \item $X$ and $Y$ are isometric;
 \item there exists an isometry $T : \overline{\lpm}_k(X,d_X) \to \overline{\lpm}_k(Y,d_Y)$ with $T(\lpm_k(X,d_X)) = \lpm_k(Y,d_Y)$ and $T({\rm P}(X) \cap \lpm_k(X,d_X)) = {\rm P}(Y) \cap \lpm_k(Y,d_Y)$;
 \item there exists an isometry $T : \lpm_k(X,d_X) \to \lpm_k(Y,d_Y)$ with $T({\rm P}(X) \cap \lpm_k(X,d_X)) = {\rm P}(Y) \cap \lpm_k(Y,d_Y)$.
\end{enumerate}
In this case, for each isometry $T : \overline{\lpm}_k(X,d_X) \to \overline{\lpm}_k(Y,d_Y)$ with $T(\lpm_k(X,d_X)) = \lpm_k(Y,d_Y)$ and $T({\rm P}(X) \cap \lpm_k(X,d_X)) = {\rm P}(Y) \cap \lpm_k(Y,d_Y)$, there is an isometry $\phi : Y \to X$ such that for any $d \in \overline{\lpm}_k(X,d_X)$ and for any $x, y \in Y$,
 $$T(d)(x,y) = d(\phi(x),\phi(y)).$$
Except for the case where the cardinality of $X$ or $Y$ is equal to $2$, the isometry $\phi$ can be chosen uniquely.
\end{thmC}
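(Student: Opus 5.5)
We prove Theorem~C; the plan is to transfer the argument behind Theorem~\ref{isometry} to the Lipschitz-bounded setting. The implications (1)$\Rightarrow$(2) and (1)$\Rightarrow$(3) are direct. If $\phi : Y \to X$ is an isometry, then $T(d)(x,y) = d(\phi(x),\phi(y))$ gives a sup-norm isometry. It preserves $\lipc$ exactly, so it maps $\overline{\lpm}_k$ onto $\overline{\lpm}_k$ and $\lpm_k$ onto $\lpm_k$. It also preserves the set of pairs where the norm is attained, so it preserves ${\rm Pp}$; and since $\phi$ maps compact sets to compact sets, it preserves ${\rm Pc}$. The implication (2)$\Rightarrow$(3) is restriction of $T$.

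So the real content is (3)$\Rightarrow$(1), together with the representation formula.

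\textbf{Step 1: extend $T$ to $\overline{\lpm}_k$.} Given an isometry $T : \lpm_k(X,d_X) \to \lpm_k(Y,d_Y)$, extend it to the sup-norm closures. Each of $\overline{\lpm}_k$ and $\lpm_k$ is convex and contains $0$, and $\lipc$ is lower semicontinuous under uniform convergence. Hence the closure of $\lpm_k$ is exactly $\overline{\lpm}_k$: any $d$ with $\lipc(d)\le k$ is the limit of $t d$ as $t \uparrow 1$. Both spaces are complete, so $T$ extends to an isometry of $\overline{\lpm}_k$ carrying $\lpm_k$ onto $\lpm_k$. From here on it suffices to work with $T$ as in (2).

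\textbf{Step 2: find point-detecting pseudometrics.} Following the strategy of \cite{Kos29}, characterize metrically, inside ${\rm P}(X)\cap\lpm_k$, the pseudometrics whose maximum is attained essentially only at one doubleton $\{z,w\}$. The natural test family is
\[
d_{z,w,r}(x,y) = \min\{r,\ k' d_X(x,y),\ \ldots\},
\]
or pseudometrics of the form $|f(x)-f(y)|$ with $f$ a $k'$-Lipschitz tent function peaked near $z$ and vanishing near $w$, where $k'<k$. The aim is to recover each doubleton $\{z,w\}$ from the sup-norm geometry, namely from which elements lie at distance exactly $\|d\|$ from $0$ and how the norm behaves under perturbations. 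The same method as in \cite{Kos29} should then produce a bijection $\phi : Y\to X$ with $T(d)(x,y)=d(\phi(x),\phi(y))$. I will take this part of the earlier argument as adaptable, the key requirement being that the test pseudometrics can still be chosen inside $\lpm_k$; $k'<k$ ensures this. The cardinality-$2$ exception and the uniqueness of $\phi$ are handled exactly as in Theorem~\ref{isometry}.

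\textbf{Step 3: show $\phi$ is an isometry.} This is the main obstacle. The formula gives $\lipc(d\circ(\phi\times\phi)) = \lipc(d)$ in the extended space, since $T$ maps $\overline{\lpm}_k$ onto $\overline{\lpm}_k$ and $\lpm_k$ onto $\lpm_k$. Apply this to $d = \min\{k d_X, c\}$ with $c$ at least the diameter of $X$ scaled by $k$; this is possible because $X$ is bounded, and this is exactly where boundedness is used. Then $d = k d_X$ lies in $\overline{\lpm}_k \setminus \lpm_k$.

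It follows that $k d_X(\phi(x),\phi(y)) \le k d_Y(x,y)$, so $\phi$ is $1$-Lipschitz. Running the same argument with $T^{-1}$ gives the reverse inequality, using the pseudometric $k d_Y$ and the inverse representation by $\phi^{-1}$. Hence $\phi$ is an isometry.

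If the boundary distinction between $\lpm_k$ and $\overline{\lpm}_k$ proves delicate, I will instead use the following fact. For every pair $a\neq b$ in $X$ and every $t<k d_X(a,b)$, there is $d\in\lpm_k$ with $d(a,b)>t$; this gives $d_Y \ge d_X\circ\phi$ by contradiction, and symmetry gives the other direction.
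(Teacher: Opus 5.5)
Your outer structure matches the paper's. You get (1)$\Rightarrow$(2) by composing with $\phi$ (Proposition~\ref{oper.}) and (2)$\Rightarrow$(3) by restriction. For (3)$\Rightarrow$(1) you first write $T(d)=d\circ(\phi\times\phi)$ and then test on multiples of $d_X$ and $d_Y$. Step~3 is correct, and your fallback is exactly the paper's argument. Since $k'd_X\in\lpm_k(X,d_X)$ for $k'<k$, you get $k'd_X(\phi(z),\phi(w))=T(k'd_X)(z,w)<kd_Y(z,w)$; letting $k'\to k$, and arguing symmetrically with $T^{-1}$, shows $\phi$ is an isometry. So Step~1 is only needed afterwards, to pass the formula from $\lpm_k$ to $\overline{\lpm}_k$ by density; the peaking argument itself should run on $\lpm_k$, where the ${\rm P}$-hypothesis lives. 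Your claim that $\lipc(d\circ(\phi\times\phi))=\lipc(d)$ follows merely from $T$ preserving $\lpm_k$ and $\overline{\lpm}_k$ is not justified as stated, but you only use $T(kd_X)\in\overline{\lpm}_k(Y,d_Y)$, which suffices. Step~3 is the easy part, not the main obstacle.

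The genuine gap is Step~2. You call it ``adaptable'', but it carries essentially all of the content, and your diagnosis of what must be adapted is off. Putting test pseudometrics inside $\lpm_k$ is easy: extend them with Theorem~\ref{ext.} and scale down. The real problem is that $\lpm_k$ is convex but not a cone, so the cone argument of \cite{Kos29} does not transfer verbatim. Four ingredients are missing.
(i) You need $T(\0_{X^2})=\0_{Y^2}$, hence $\|T(d)\|=\|d\|$. The paper gets this in Proposition~\ref{norm} from the inequality of \cite[Lemma~2.3]{HMM}, applied to $tT(\0_{X^2})$ for a specific $t>1$ that keeps it in $\lpm_k$.
(ii) You need homogeneity $T(td)=tT(d)$ whenever $td\in\lpm_k$ (Proposition~\ref{scalar}).
(iii) You need a Lipschitz peaking lemma: for each $d$ and pair $(x,y)$, a Lipschitz $\rho$ such that $d+\rho$ attains its norm only at $\{x,y\}$ (Lemma~\ref{transl.}). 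Its Lipschitz constant is not controlled by $k$.
(iv) Because $T(d)+\rho$ and $\sum_i d_i$ leave $\lpm_k$, the finite-intersection and evaluation arguments must be run with $\frac{1}{\lambda}(T(d)+\rho)$ and $\frac{1}{n}\sum_i d_i$, using (i) and (ii) to move back and forth (Lemma~\ref{f.i.p.}, Proposition~\ref{eq.}). Then a triangle-inequality argument with prescribed values $1,1,3$ on a finite set turns the doubleton bijection $\Phi$ into a point map $\phi$ (Proposition~\ref{corresp.}).

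Your proposed test family does not replace (iii). A pseudometric $|f(x)-f(y)|$ attains its norm at every pair $(a,b)$ with $f(a)=\max f$ and $f(b)=\min f$. For your tent functions the second set contains a whole neighbourhood of $w$, so such a pseudometric does not isolate a doubleton. Without these steps, the representation $T(d)(x,y)=d(\phi(x),\phi(y))$ on which your Step~3 rests is unproved.
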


\section{Uniformities and pseudometrics}\label{unif.metr.}

Before our main argument, let us review the study on uniformities and pseudometrics.
For the definition and basic theory of uniform spaces, refer to Chapter~8 of \cite{En}.
A space $Z = (Z,\mathcal{U}_Z)$ induces the following topology,
 which is Tychonoff,\footnote{Conversely, a Tychonoff space admits a uniformity compatible with its original topology, see \cite[Theorem~8.1.20]{En}.} (cf.~\cite[Corollary~8.1.13]{En}):
 $$\mathcal{T}(\mathcal{U}_Z) = \{U \subset Z \mid \text{for any } x \in U, \text{ there is } V \in \mathcal{U}_Z \text{ such that } \{y \in Z \mid (x,y) \in V\} \subset U\}.$$
Uniformity is a generalization of metric.
It is known that for a metric space $Z = (Z,d_Z)$, its metric $d_Z$ can generate a uniformity as follows (cf.~\cite[Proposition~8.1.18]{En}):
 $$\mathcal{U}_{d_Z} = \{U \subset Z^2 \mid U \text{ is an entourage and } \{(x,y) \in Z^2 \mid d_Z(x,y) < \epsilon\} \subset U \text{ for some } \epsilon > 0\}.$$
The uniform continuity of a pseudometric $d$ on $Z$ with respect to $\mathcal{U}_{d_Z}$ coincides with the one with respect to $d_Z$, that is, $\upm(Z,d_Z) = \upm(Z,\mathcal{U}_{d_Z})$.
We similarly define the function spaces $\uc(Z,\mathcal{U}_Z)$ and ${\rm U}(Z,\mathcal{U}_Z)$,
 and notice that for a metric space $Z = (Z,d_Z)$, $\uc(Z,d_Z) = \uc(Z,\mathcal{U}_{d_Z})$ and ${\rm U}(Z,d_Z) = {\rm U}(Z,\mathcal{U}_{d_Z})$.
As is easily observed,
 a uniformly continuous pseudometric $d$ is continuous,
 so $\upm(Z,\mathcal{U}_Z) \subset \pseudo(Z)$ (cf.~\cite[Proposition~8.1.9]{En}).

A space $Z = (Z,\mathcal{U}_Z)$ is \textit{totally bounded} if for each entourage $U \in \mathcal{U}_Z$, there is a finite subset $A \subset Z$ such that for every $x \in Z$, there exists $y \in A$ such that the pair $(x,y) \in U$.
We say that $Z$ is \textit{complete} if the following condition holds:
\begin{itemize}
 \item Let $\mathcal{A}$ be any family of closed sets in $Z$ which has the finite intersection property.
 If for each $U \in \mathcal{U}_Z$, there exists $A \in \mathcal{A}$ such that $A^2 \subset U$,
 then $\bigcap \mathcal{A} \neq \emptyset$.
\end{itemize}
Recall that in the case where $\mathcal{U}_Z = \mathcal{U}_{d_Z}$ for some metric $d_Z$ on $Z$, the total boundedness (respectively, completeness) on a uniform space coincides with the one on a metric space, see Propositions~8.3.1 and 8.3.5 of \cite{En}.
A space $Z$ is compact if and only if it is totally bounded and complete (cf.~Theorem~8.3.16 of \cite{En}),
 and then there exists the only one uniformity compatible with the original topology on $Z$ (cf.~\cite[Theorem~8.3.13]{En}).

For a subset $W$ of a space $Z = (Z,\mathcal{U}_Z)$, we induce the relative uniformity on $W$ by
 $$\mathcal{U}_Z|_W = \{U \cap W^2 \mid U \in \mathcal{U}_Z\}.$$
Obviously, for each $d \in \upm(Z,\mathcal{U}_Z)$, the restriction $d|_{W^2} \in \upm(W,\mathcal{U}_Z|_W)$.
It is known that for every space $Z$, there exists exactly one complete space $\overline{Z} = (\overline{Z},\overline{\mathcal{U}_Z})$ such that $Z$ is uniformly homeomorphic to some dense subspace $W = (W,\overline{\mathcal{U}_Z}|_W)$ in $\overline{Z}$,
 where we call $\overline{Z}$ the \textit{completion} of $Z$ (refer to \cite[Theorem~8.3.12]{En}).
By virtue of \cite[Exercise~8.1.F and Problem~8.5.7.(a)]{En}, we can obtain the coarsest uniformity $(\mathcal{U}_Z)_0$ among uniformities $\mathcal{V}_Z$ on $Z$ satisfying that all functions belonging in $\uc(Z,\mathcal{U}_Z)$ are also uniformly continuous with respect to $\mathcal{V}_Z$.
Then $(\mathcal{U}_Z)_0$ is the finest uniformity that is totally bounded and coarser than $\mathcal{U}_Z$.
The completion $uZ = (\overline{Z},\overline{(\mathcal{U}_Z)_0})$ of $Z = (Z,(\mathcal{U}_Z)_0)$ is called \textit{the Smirnov (or Samuel) compactification} of $Z$.
Given a finite subset $F \subset \uc(Z,\mathcal{U}_Z)$, we define a function $d(F) : Z^2 \to \R$ by
 $$d(F)(x,y) = \max\{|f(x) - f(y)| \mid f \in F\}.$$
Observe that $d(F) \in \upm(Z,\mathcal{U}_Z)$.
Set
 $$\mathcal{B}_Z = \{\{(x,y) \in Z^2 \mid d(F)(x,y) < 2^{-i}\} \mid F \text{ is a finite set in } \uc(Z,\mathcal{U}_Z) \text{ and } i \in \N\}.$$
Then we have the following:

\begin{proposition}\label{base}
For a space $Z = (Z,\mathcal{U}_Z)$,
 $$(\mathcal{U}_Z)_0 = \{U \subset Z^2 \mid U \text{ is an entourage and contains some } B \in \mathcal{B}_Z\}.\footnote{Refer to Example~8.1.19 of \cite{En} for a similar uniformity.}$$
\end{proposition}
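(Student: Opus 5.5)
Let $\mathcal{V}$ denote the family on the right-hand side. The plan is to show three things: $\mathcal{V}$ is a uniformity, every function in $\uc(Z,\mathcal{U}_Z)$ is uniformly continuous with respect to $\mathcal{V}$, and $\mathcal{V}$ is contained in every uniformity $\mathcal{W}$ having that property. Since $(\mathcal{U}_Z)_0$ is by definition the coarsest uniformity with this property, these three facts give $\mathcal{V} = (\mathcal{U}_Z)_0$.

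\emph{Step 1: $\mathcal{V}$ is a uniformity.} Here $\mathcal{B}_Z$ should serve as a base, so it suffices to verify the base axioms. Each $B \in \mathcal{B}_Z$ contains the diagonal and is symmetric, because $d(F)$ is a pseudometric. Next, $\mathcal{B}_Z$ is closed under finite intersections up to refinement. Indeed, for $\{d(F) < 2^{-i}\}$ and $\{d(G) < 2^{-j}\}$ we put $H = F \cup G$ and $k = \max\{i,j\}$; then $d(H) = \max\{d(F),d(G)\}$ gives $\{d(H) < 2^{-k}\} \subset \{d(F) < 2^{-i}\} \cap \{d(G) < 2^{-j}\}$. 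For the "half" axiom, the triangle inequality gives $\{d(F) < 2^{-i-1}\} \circ \{d(F) < 2^{-i-1}\} \subset \{d(F) < 2^{-i}\}$. Finally, $\mathcal{V}$ is upward closed by definition. Together these show that $\mathcal{V}$ is a uniformity with base $\mathcal{B}_Z$.

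\emph{Step 2: uniform continuity with respect to $\mathcal{V}$.} Take $f \in \uc(Z,\mathcal{U}_Z)$ and $\epsilon > 0$. Choose $i$ with $2^{-i} < \epsilon$. On the entourage $\{d(\{f\}) < 2^{-i}\} \in \mathcal{V}$ we have $|f(x) - f(y)| < \epsilon$, which is exactly uniform continuity of $f$ for $\mathcal{V}$.

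\emph{Step 3: minimality.} Let $\mathcal{W}$ be any uniformity for which all $f \in \uc(Z,\mathcal{U}_Z)$ are uniformly continuous. Fix a finite $F \subset \uc(Z,\mathcal{U}_Z)$ and $i \in \N$. For each $f \in F$ there is $W_f \in \mathcal{W}$ with $|f(x) - f(y)| < 2^{-i}$ whenever $(x,y) \in W_f$. The finite intersection $\bigcap_{f \in F} W_f$ lies in $\mathcal{W}$ and is contained in $\{d(F) < 2^{-i}\}$, so every member of $\mathcal{B}_Z$ belongs to $\mathcal{W}$. Since $\mathcal{W}$ is upward closed, $\mathcal{V} \subset \mathcal{W}$. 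In particular $\mathcal{V} \subset (\mathcal{U}_Z)_0$, while Step 2 together with the defining minimality of $(\mathcal{U}_Z)_0$ gives $(\mathcal{U}_Z)_0 \subset \mathcal{V}$, hence equality.

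None of the steps is really an obstacle. The only point requiring care is bookkeeping: one must check that the paper's definition of "entourage", as used in the statement, matches the axioms from \cite{En}, namely that it contains the diagonal and is symmetric where the convention requires it. In particular, the upward closure in the right-hand side must be taken within entourages, so that it yields exactly the uniformity generated by the base $\mathcal{B}_Z$.
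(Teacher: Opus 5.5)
Your proof is correct and is essentially the paper's argument: Step~2 plus minimality gives $(\mathcal{U}_Z)_0 \subset \mathcal{V}$ exactly as in the paper, Step~3 is the paper's converse inclusion stated for an arbitrary $\mathcal{W}$ instead of only $(\mathcal{U}_Z)_0$, and Step~1 spells out what the paper dismisses as ``easily observed''. Step~1 omits one axiom: Engelking's definition also requires separation, $\bigcap\mathcal{V} = \{(z,z) \mid z \in Z\}$, and this holds because for $x \neq y$, \cite[Corollary~8.1.11]{En} gives a uniformly continuous pseudometric $\rho$ with $\rho(x,y) \geq 1$, so $z \mapsto \min\{\rho(x,z),1\}$ is a function in $\uc(Z,\mathcal{U}_Z)$ separating $x$ and $y$.
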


\begin{proof}
As is easily observed,
 the right hand side is a uniformity on $Z$.
First, to prove that $(\mathcal{U}_Z)_0$ is contained in the right hand side, fix $f \in \uc(Z,\mathcal{U}_Z)$ arbitrarily.
For each positive integer $i \in \N$, take the following entourage:
 $$B = \{(x,y) \in Z^2 \mid d(\{f\})(x,y) < 2^{-i}\} \in \mathcal{B}_Z.$$
Then for all pairs $(x,y) \in B$,
 $$|f(x) - f(y)| = d(\{f\})(x,y) < 2^{-i},$$
 which implies that $f$ is uniformly continuous with respect to the right hand side.
By the minimality of $(\mathcal{U}_Z)_0$, the right hand side contains $(\mathcal{U}_Z)_0$.

Next, we will show the converse inclusion.
Let any entourage $U$ in the right hand side,
 so we can find a finite subset $F \subset \uc(Z,\mathcal{U}_Z)$ and a positive integer $i \in \N$ so that if $d(F)(x,y) < 2^{-i}$,
 then $(x,y) \in U$.
Since each $f \in F$ is uniformly continuous with respect to $(\mathcal{U}_Z)_0$,
 there is $V \in (\mathcal{U}_Z)_0$ such that if $(x,y) \in V$,
 then $|f(x) - f(y)| < 2^{-i}$ for all $f \in F$,
 that is $d(F)(x,y) < 2^{-i}$.
Then $V \subset U$,
 and hence $U \in (\mathcal{U}_Z)_0$.
The right hand side is contained in $(\mathcal{U}_Z)_0$.
The proof is completed.
\end{proof}

We can describe the space ${\rm U}(Z,\mathcal{U}_Z)$ as follows:

\begin{proposition}\label{unif.}
For a space $Z = (Z,\mathcal{U}_Z)$,
 $${\rm U}(Z,\mathcal{U}_Z) = \{d \in \upm(Z,\mathcal{U}_Z) \mid d \text{ is uniformly continuous with respect to } (\mathcal{U}_Z)_0\}.$$
\end{proposition}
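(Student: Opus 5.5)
We prove the two inclusions separately. Both rest on Proposition~\ref{base}, which says that the sets in $\mathcal{B}_Z$ form a base of $(\mathcal{U}_Z)_0$. We also use that $(\mathcal{U}_Z)_0$ is coarser than $\mathcal{U}_Z$.

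For the inclusion $\subset$, take $d \in {\rm U}(Z,\mathcal{U}_Z)$; by definition $d \in \pseudo(Z)$, so $d$ is bounded. Fix $\epsilon > 0$. The defining condition of ${\rm U}(Z,\mathcal{U}_Z)$ gives a finite set $F \subset \uc(Z,\mathcal{U}_Z)$ and $\delta > 0$ such that $d(F)(x,y) < \delta$ implies $d(x,y) < \epsilon$. Choose $i \in \N$ with $2^{-i} \le \delta$. Then the entourage $B = \{(x,y) \in Z^2 \mid d(F)(x,y) < 2^{-i}\}$ lies in $\mathcal{B}_Z$, hence in $(\mathcal{U}_Z)_0$ by Proposition~\ref{base}, and $d(x,y) < \epsilon$ on $B$. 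So $d$ is uniformly continuous with respect to $(\mathcal{U}_Z)_0$. Since $(\mathcal{U}_Z)_0 \subset \mathcal{U}_Z$, the same entourage witnesses uniform continuity with respect to $\mathcal{U}_Z$. Thus $d \in \upm(Z,\mathcal{U}_Z)$, which proves this inclusion.

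For the inclusion $\supset$, take $d \in \upm(Z,\mathcal{U}_Z)$ that is uniformly continuous with respect to $(\mathcal{U}_Z)_0$, and fix $\epsilon > 0$. There is $V \in (\mathcal{U}_Z)_0$ with $d(x,y) < \epsilon$ whenever $(x,y) \in V$. By Proposition~\ref{base}, $V$ contains some $B \in \mathcal{B}_Z$, say $B = \{(x,y) \mid d(F)(x,y) < 2^{-i}\}$ with $F \subset \uc(Z,\mathcal{U}_Z)$ finite and $i \in \N$. Put $\delta = 2^{-i}$. If $|f(x)-f(y)| < \delta$ for all $f \in F$, then $d(F)(x,y) < \delta$, so $(x,y) \in B \subset V$ and hence $d(x,y) < \epsilon$. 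This is exactly the defining condition of ${\rm U}(Z,\mathcal{U}_Z)$. Since also $d \in \pseudo(Z)$, we get $d \in {\rm U}(Z,\mathcal{U}_Z)$.

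No step is a real obstacle; the argument is a direct translation through the base $\mathcal{B}_Z$. The only points to check are bookkeeping. The membership $d \in \pseudo(Z)$ must be read in the same way in both sets: it means bounded and continuous, and continuity follows from uniform continuity. One must also confirm that $\upm(Z,\mathcal{U}_Z) \subset \pseudo(Z)$, which the paper has already noted.
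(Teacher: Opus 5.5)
Your proof is correct and matches the paper's argument essentially step for step: both inclusions go through the base $\mathcal{B}_Z$ of $(\mathcal{U}_Z)_0$ from Proposition~\ref{base}, choosing $2^{-i} \le \delta$ in one direction and taking $\delta = 2^{-i}$ in the other. Your extra remark that $(\mathcal{U}_Z)_0 \subset \mathcal{U}_Z$ gives $d \in \upm(Z,\mathcal{U}_Z)$ supplies a small step that the paper leaves implicit.
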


\begin{proof}
For each $d \in {\rm U}(Z,\mathcal{U}_Z)$ and for each $\epsilon > 0$, we can choose a finite set $F$ in $\uc(Z,\mathcal{U}_Z)$ and a positive number $\delta > 0$ so that if $|f(x) - f(y)| < \delta$ for any $f \in F$,
 then $d(x,y) < \epsilon$.
Let
 $$B = \{(x,y) \in Z^2 \mid d(F)(x,y) < 2^{-i}\} \in \mathcal{B}_Z,$$
 where $i \in \N$ satisfies that $2^{-i} \leq \delta$,
 so $B \in (\mathcal{U}_Z)_0$ by Proposition~\ref{base}.
Obviously, for any $(x,y) \in B$ and any $f \in F$,
 $$|f(x) - f(y)| \leq d(F)(x,y) < 2^{-i} \leq \delta,$$
 which implies that $d(x,y) < \epsilon$.
Hence $d$ is uniformly continuous with respect to $(\mathcal{U}_Z)_0$.

Fix any uniformly continuous pseudometric $d$ with respect to $(\mathcal{U}_Z)_0$.
Then for every $\epsilon > 0$, there is $U \in (\mathcal{U}_Z)_0$ such that if $(x,y) \in U$,
 $d(x,y) < \epsilon$.
According to Proposition~\ref{base}, for the entourage $U$, we can take a finite subset $F \subset \uc(Z,\mathcal{U}_Z)$ and $i \in \N$ which fulfill the condition that $d(F)(x,y) < 2^{-i}$ implies that $(x,y) \in U$.
Then if $|f(x) - f(y)| < 2^{-i}$ for all functions $f \in F$,
 then $d(F)(x,y) < 2^{-i}$,
 and hence $(x,y) \in U$.
Therefore $d(x,y) < \epsilon$ and $d \in {\rm U}(Z,\mathcal{U}_Z)$.
We complete the proof.
\end{proof}

Remark that the spaces
 $${\rm U}(Z,\mathcal{U}_Z) \subset \upm(Z,\mathcal{U}_Z) \subset \pseudo(Z)$$
 are closed cones in the Banach space $\conti(Z^2)$,
 which are containing the zero function.
Their sup-metrics induced by $\|\cdot\|$ are complete.
When $Z$ is totally bounded,
 $\mathcal{U}_Z = (\mathcal{U}_Z)_0$,
 and hence $\upm(Z,\mathcal{U}_Z) = {\rm U}(Z,\mathcal{U}_Z)$ by Proposition~\ref{unif.}.
Moreover, due to \cite[Theorem~8.3.13]{En}, $\pseudo(Z) = \upm(Z,\mathcal{U}_Z)$ if $Z$ is compact.
When $Z = (Z,d_Z)$ is a metric space,
 in $\upm(Z,d_Z)$, the space $\lpm(Z,d_Z)$ is a cone,
 and the spaces $\lpm_k(Z,d_Z)$ and $\overline{\lpm}_k(Z,d_Z)$ are convex sets,
 all of which are also contain the zero function.
The space $\overline{\lpm}_k(Z,d_Z)$ is a closed subset of $\pseudo(Z)$.
In fact, for each $d \in \pseudo(Z) \setminus \overline{\lpm}_k(Z,d_Z)$, there is a pair $(x,y) \in Z^2$ of distinct points such that $d(x,y) > kd_Z(x,y)$.
Then for all pseudometrics $\rho \in \pseudo(Z)$ with
 $$\|\rho - d\| < d(x,y) - kd_Z(x,y),$$
 $\rho(x,y) > kd_Z(x,y)$,
 which means that $\rho \in \pseudo(Z) \setminus \overline{\lpm}_k(Z,d_Z)$,
 and that $\overline{\lpm}_k(Z,d_Z)$ is closed in $\pseudo(Z)$.
Observe that $\lpm_k(Z,d_Z)$ is dense in $\overline{\lpm}_k(Z,d_Z)$.
Indeed, for every $d \in \overline{\lpm}_k(Z,d_Z) \setminus \lpm_k(Z,d_Z)$ and every $\epsilon > 0$, taking a positive number $\delta < \min\{1,\epsilon/\|d\|\}$, we have that for all $x, y \in Z$ with $x \neq y$,
 $$\frac{(1 - \delta)d(x,y)}{d_Z(x,y)} \leq (1 - \delta)\lipc(d) < \lipc(d) = k.$$
Hence $(1 - \delta)d \in \lpm_k(Z,d_Z)$.
Furthermore,
 $$\|d - (1 - \delta)d\| = \|\delta d\| = \delta\|d\| < \frac{\epsilon}{\|d\|} \cdot \|d\| = \epsilon.$$

The Hausdorff metric extension theorem \cite{Hau} plays an important role in the previous paper \cite{Kos29} to construct some nice metrics instead of Urysohn's lemma frequently used in proving various Banach-Stone type theorems.
We can obtain the uniformly continuous and lipschitz continuous pseudometric versions of it, refer to Lemma~1.4 of \cite{Isb1} (cf.~Problem~8.5.6(a) of \cite{En}) and Theorem~4.1 of \cite{BBST}.

\begin{theorem}\label{ext.}
Suppose that $Z = (Z,\mathcal{U}_Z)$ is a space and $A \subset Z$ is a closed subset.
For each $d \in \upm(A,\mathcal{U}_Z|_A)$, there is $\tilde{d} \in \upm(Z,\mathcal{U}_Z)$ such that $\tilde{d}|_{A^2} = d$ and $\|\tilde{d}\| = \|d\|$.
Moreover, if $Z = (Z,d_Z)$ is a metric space,
 then for each $d \in \lpm(A,d_Z|_{A^2})$, there is $\tilde{d} \in \lpm(Z,d_Z)$ such that $\tilde{d}|_{A^2} = d$, $\|\tilde{d}\| = \|d\|$ and $\lipc(\tilde{d}) = \lipc(\tilde{d}|_{A^2})$.
\end{theorem}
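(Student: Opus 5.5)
We prove Theorem~\ref{ext.} by a Hausdorff/McShane-type construction adapted to pseudometrics: first extend through a metric-like gauge on $Z$, then truncate at $\|d\|$ so that the norm does not grow. Throughout, $d$ is a bounded pseudometric on $A$ and $r = \|d\|$.

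\emph{Lipschitz case.} Put $k = \lipc(d)$ and define on $Z^2$
$$\tilde{d}(x,y) = \min\Bigl\{ r,\ k\,d_Z(x,y),\ \inf_{a,b \in A}\bigl(k\,d_Z(x,a) + d(a,b) + k\,d_Z(b,y)\bigr)\Bigr\}.$$
We will verify four facts in order.
\begin{enumerate}
 \item $\tilde{d}$ is symmetric, vanishes on the diagonal, and is bounded by $r$.
 \item The triangle inequality holds. We check it term by term: a path through $A$ concatenated with a direct segment of length $k\,d_Z$ is again dominated by a term of the minimum, using the triangle inequality for $d_Z$. Concatenating two paths through $A$, namely $(a,b)$ and then $(a',b')$, is handled by $d(a,b') \le d(a,b) + d(b,a') + d(a',b')$ together with $d(b,a') \le k\,d_Z(b,a') \le k\,d_Z(b,y) + k\,d_Z(y,a')$. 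Truncating at the constant $r$ preserves the triangle inequality.
 \item $\tilde{d}$ agrees with $d$ on $A^2$. On $A$ we have $d \le k\,d_Z$ and $d \le r$, and the infimum term is at least $d(x,y)$, again because $d \le k\,d_Z$ on $A$ and by the triangle inequality for $d$.
 \item The Lipschitz constant does not grow. We have $\tilde{d} \le k\,d_Z$, so $\lipc(\tilde{d}) \le k$; since $\tilde{d}$ extends $d$, the reverse inequality is immediate, so $\lipc(\tilde{d}) = k = \lipc(\tilde{d}|_{A^2})$.
\end{enumerate}
Finally $\|\tilde{d}\| = r$, because $\tilde{d} \le r$ and $\tilde{d}$ attains the values of $d$ on $A^2$.

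\emph{Uniform case.} There is no metric on $Z$ to play the role of $k\,d_Z$, so we first build a substitute gauge. Using uniform continuity of $d$ on $A$, choose entourages $V_n \in \mathcal{U}_Z$ with $V_{n+1}^3 \subset V_n$ such that $(a,b) \in V_n \cap A^2$ implies $d(a,b) < 2^{-n}$. By the metrization lemma for uniformities (Engelking, Theorem~8.1.10), there is a uniformly continuous pseudometric $\rho$ on $Z$ with $d \le \rho$ on $A^2$. Then $\rho' = \max\{\rho, d'\}$ dominates $d$ on $A$ while remaining uniformly continuous. (The pseudometric $d'$ used here is obtained after a rescaling of $\rho$, which is where the main obstacle below sits.) We then apply the same formula as in the Lipschitz case with $k\,d_Z$ replaced by $\rho'$:
$$\tilde{d}(x,y) = \min\Bigl\{ r,\ \rho'(x,y),\ \inf_{a,b \in A}\bigl(\rho'(x,a) + d(a,b) + \rho'(b,y)\bigr)\Bigr\}.$$
The same four checks apply. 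The triangle inequality and $\|\tilde{d}\| = r$ go through verbatim, and $\tilde{d} \le \rho'$ gives uniform continuity on $Z$.

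The main obstacle is showing that $\tilde{d}|_{A^2} = d$. This requires $d(a,b) \le \rho'(a,b)$ for $a, b \in A$, so that the infimum term never undercuts $d$. The Engelking construction only gives $\rho \ge$ something comparable to $d$ up to a factor and an additive $2^{-n}$ error. I would handle this by running the chain construction directly for $d$: take
$$\rho(x,y) = \inf \sum_i \max\{\text{gauge}(x_i, x_{i+1}),\ \cdots\}$$
over chains, and verify on $A$ that $d \le \rho$, using $d(a,b) \le \sum_i d(x_i, x_{i+1})$ along chains inside $A$. Chains that leave $A$ are controlled by choosing the gauge values at least $r$ outside the relevant entourages. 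Closedness of $A$ is not needed for the inequality itself, only to match the hypotheses as stated.
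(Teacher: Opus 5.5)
Your Lipschitz half is correct and complete. The untruncated part $\sigma=\min\{k\,d_Z,P\}$, where $P$ is your path term, is a pseudometric. Of the four concatenation cases only path--path is nontrivial, and it uses $d\le k\,d_Z$ on $A$. Moreover $\sigma$ restricts to $d$ on $A^2$. Truncating at $r=\|d\|$ preserves the triangle inequality and gives $\|\tilde d\|=r$. Finally $\tilde d\le k\,d_Z$ gives $\lipc(\tilde d)=k$. The paper does not prove this theorem. It cites Isbell's Lemma~1.4 (Engelking, Problem~8.5.6(a)) for the uniform part and Theorem~4.1 of Banakh--Brodskiy--Stasyuk--Tymchatyn for the Lipschitz part. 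Your explicit formula is a more elementary, self-contained substitute for the latter, and it shows that closedness of $A$ plays no role.

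The gap is in the uniform half. Everything there rests on a uniformly continuous pseudometric $\rho'$ on $Z$ with $\rho'\ge d$ on $A^2$. You need it for $\tilde d|_{A^2}=d$, and also for the path--path step of the triangle inequality, which in the Lipschitz case used $d(b,a')\le k\,d_Z(b,a')$. So your claim that the checks go through ``verbatim'' holds only once $\rho'$ is in hand. This domination is not a side issue: it is the whole content of the uniform case, since any extension is itself such a $\rho'$. Your text first asserts it ($\rho'=\max\{\rho,d'\}$ with $d'$ undefined) and then flags it as the main obstacle. You then propose a chain construction whose gauge, ``$\cdots$'', and treatment of chains leaving $A$ are never specified, so nothing there can be checked.

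The missing step is short, and your worry about an ``additive $2^{-n}$ error'' is unfounded. Choose symmetric $V_n\in\mathcal{U}_Z$ with $V_0=Z^2$, $V_{n+1}^3\subset V_n$, and $V_n\cap A^2\subset\{d<2^{-n}\}$ for $n\ge 1$. The metrization lemma (Engelking, Theorem~8.1.10) gives a uniformly continuous pseudometric $\rho$ with $\{\rho<2^{-n}\}\subset V_n$ for $n\ge1$; an index shift would only change constants. Now take $a,b\in A$ and split into three cases.
\begin{itemize}
\item If $2^{-n-1}\le\rho(a,b)<2^{-n}$ for some $n\ge1$, then $d(a,b)<2^{-n}\le 2\rho(a,b)$.
\item If $\rho(a,b)=0$, then $d(a,b)<2^{-n}$ for every $n$, so $d(a,b)=0$.
\item If $\rho(a,b)\ge 1/2$, then boundedness gives $d(a,b)\le r\le 2r\rho(a,b)$.
\end{itemize}
Because $n$ is chosen according to the value $\rho(a,b)$, the comparison is multiplicative at small scales, and boundedness of $d$ handles large scales. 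Hence $\rho'=\max\{2,2r\}\rho$ dominates $d$ on $A^2$. Your formula $\min\{r,\rho',\inf_{a,b}(\rho'(x,a)+d(a,b)+\rho'(b,y))\}$ then finishes the uniform case exactly as in the Lipschitz case, with $\tilde d\le\rho'$ giving uniform continuity.
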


\section{The peaking function argument}\label{peak}

Our strategy to prove Theorems~A, B and C is the peaking function argument,
 which traces its history back to Stone's method.
From now on, let $X = (X,\mathcal{U}_X)$ and $Y = (Y,\mathcal{U}_Y)$ be non-degenerate spaces,
 and let a pair $\metr(X)$ and $\metr(Y)$ be $\upm(X,\mathcal{U}_X)$ and $\upm(Y,\mathcal{U}_Y)$, in the case that $X = (X,d_X)$ and $Y = (Y,d_Y)$ are metric spaces, or $\lpm(X,d_X)$ and $\lpm(Y,d_Y)$, or $\lpm_k(X,d_X)$ and $\lpm_k(Y,d_Y)$,
 where $k > 0$ is arbitrary.
Suppose that $T : \metr(X) \to \metr(Y)$ is an isometry.
When the pair $\metr(X)$ and $\metr(Y)$ is $\upm(X,\mathcal{U}_X)$ and $\upm(Y,\mathcal{U}_Y)$ or $\lpm(X,d_X)$ and $\lpm(Y,d_Y)$,
 the results of this section can be obtained in the similar way as in Section~3 and 4 of \cite{Kos29}.
Therefore we will investigate mainly the case where $\metr(X) = \lpm_k(X,d_X)$ and $\metr(Y) = \lpm_k(Y,d_Y)$, respectively.

\begin{proposition}\label{norm}
The isometry $T$ is norm-preserving,
 that is, for any $d \in \metr(X)$ and any $\rho \in \metr(Y)$, $\|T(d)\| = \|d\|$ and $\|T^{-1}(\rho)\| = \|\rho\|$.
\end{proposition}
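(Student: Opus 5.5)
The plan is to show first that $T$ maps the zero pseudometric $\0$ to $\0$. Once that is known, norm preservation is immediate. Indeed, for $d \in \metr(X)$,
 $$\|T(d)\| = \|T(d) - T(\0)\| = \|d - \0\| = \|d\|,$$
and the same computation applied to the isometry $T^{-1}$ gives $\|T^{-1}(\rho)\| = \|\rho\|$. So everything reduces to characterizing $\0$ metrically inside $\metr(X)$, in a way that holds in all three cases $\upm$, $\lpm$ and $\lpm_k$.

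The characterization I would use is \emph{unbounded distance in one step}, measured through the convex and cone structure. The key observation is that $\metr(X)$ is a convex set containing $\0$ and invariant under multiplication by $t \in [0,1]$. (It is a cone in the $\upm$ and $\lpm$ cases, and for $\lpm_k$ scaling down keeps $\lipc$ below $k$.) The cleanest invariant seems to be the following: $\0$ is the unique point $p$ such that for every $d \in \metr(X)$ there is $e \in \metr(X)$ with
 $$\|e - p\| = \|e - d\| + \|d - p\|,$$
namely $e = 2d$ when $p = \0$.

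This fails for $\lpm_k$, since $2d$ may leave the set. So instead I would use an extremality property that works in all cases: every $d$ is the metric midpoint of $\0$ and some point of the space. That also fails near the boundary of $\lpm_k$.

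Let me pick a different invariant, one based on pseudometric nonnegativity. For $d \neq \0$ choose $(x,y)$ with $d(x,y) > 0$. Let $\rho$ be the pseudometric $\rho(u,v) = c\,|f(u) - f(v)|$, where $f$ is a bounded Lipschitz function with small constant that is large at $x$ and vanishes at $y$. Then $\rho \in \metr(X)$, and I would aim to show that the radius of $\metr(X)$ about a point, $r(p) = \sup_{q \in \metr(X)} \|q - p\|$, is infinite for $p = \0$. That, however, is also infinite for all $p$ in the cone cases.

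The approach I will actually follow is this: the point $\0$ is characterized as the unique $p$ such that for all $q_1, q_2 \in \metr(X)$,
 $$\|q_1 - q_2\| \le \max\{\|q_1 - p\|, \|q_2 - p\|\}.$$
This holds at $\0$ because pseudometrics are nonnegative, so $|q_1 - q_2| \le \max\{q_1, q_2\}$ pointwise. For $p \neq \0$, pick $(x,y)$ with $p(x,y) = a > 0$ nearly maximal. Then take $q_1 = p + \varepsilon\rho$, where $\rho$ is a small Lipschitz pseudometric of the form $|f(u) - f(v)|$ with $\rho(x,y) > 0$, chosen small enough that $q_1$ stays in $\lpm_k$ (this uses $\lipc(p) < k$, which is exactly where openness of the strict inequality in $\lpm_k$ helps). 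Also take $q_2 = (1-\eta)p$ scaled down. These violate the inequality on suitable pairs. Because the condition is isometry-invariant and $T$ is surjective, $T(\0)$ satisfies it in $\metr(Y)$, so $T(\0) = \0$.

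The main obstacle is constructing, for $p \neq \0$, the witnesses $q_1, q_2$ that break the ultrametric-type inequality while staying inside $\lpm_k$. The inequality compares values at possibly different pairs, so I expect to have to tune $q_1$ to exceed $p$ where $p$ is near its supremum and $q_2$ to fall below it there. I would try $q_2 = \0$ first: then the inequality reads $\|q_1\| \le \max\{\|q_1 - p\|, \|p\|\}$, and with $q_1 = (1+\varepsilon')p$ (when allowed) or $q_1 = p + \varepsilon\rho$ it fails, since $\|q_1\| > \|p\|$ while $\|q_1 - p\|$ is small.
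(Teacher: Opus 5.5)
Your final argument is correct. It rests on the same two facts as the paper's proof: $\|f-g\|\le\max\{\|f\|,\|g\|\}$ for nonnegative $f,g$, and the fact that a nonzero element of $\metr$ can be scaled by some $t>1$ without leaving $\metr$. The difference is organizational.

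\begin{itemize}
\item You turn the first fact into an isometry-invariant characterization of $\0$: it is the unique $p$ with $\|q_1-q_2\|\le\max\{\|q_1-p\|,\|q_2-p\|\}$ for all $q_1,q_2$. You use the second fact for uniqueness, taking $q_2=\0$ and $q_1=(1+\varepsilon)p$, since $(1+\varepsilon)\|p\|>\max\{\varepsilon,1\}\|p\|$ when $p\neq\0$. Surjectivity of $T$ then gives $T(\0_{X^2})=\0_{Y^2}$ in one step.
\item The paper follows the Hirota--Matsuzaki--Miura template. It shows that $\|T(\0_{X^2})\|<\|d\|$ forces $\|d\|\le\|T(d)\|$, and the analogue for $T^{-1}$. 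It then assumes $T^{-1}(\0_{Y^2})\neq\0_{X^2}$ and reaches a contradiction using $tT(\0_{X^2})$.
\end{itemize}

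Unwound, the paper's argument is exactly your test with $p=T(\0_{X^2})$, $q_1=tp$, $q_2=\0_{Y^2}$. Note that $\|T^{-1}(\0_{Y^2})\|=\|\0_{Y^2}-T(\0_{X^2})\|=\|T(\0_{X^2})\|$ holds automatically, so the paper's intermediate inequality is in fact an equality. Your version is therefore a shorter and more transparent packaging of the same computation. It avoids the detour through $T^{-1}(\0_{Y^2})$ and the auxiliary norm-comparison lemma.

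When you write it up, make these changes.
\begin{itemize}
\item Delete the abandoned invariants.
\item Delete the $q_1=p+\varepsilon\rho$, $q_2=(1-\eta)p$ plan. Its key claim $\|p+\varepsilon\rho\|>\|p\|$ is not automatic.
\item Drop the hedge ``when allowed'' and say why it is unnecessary. In the $\upm$ and $\lpm$ cases, $(1+\varepsilon)p$ always lies in $\metr(X)$. In the $\lpm_k$ case, $\lipc(p)<k$ gives $(1+\varepsilon)p\in\lpm_k(X,d_X)$ for every $\varepsilon>0$ with $(1+\varepsilon)\lipc(p)<k$.
\end{itemize}
With that, the ``main obstacle'' you anticipate does not arise.
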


\begin{proof}
We will prove only the case that $\metr(X) = \lpm_k(X,d_X)$ and $\metr(Y) = \lpm_k(Y,d_Y)$.
This proposition follows from that $T(\0_{X^2}) = \0_{Y^2}$ and $T^{-1}(\0_{Y^2}) = \0_{X^2}$ immediately,
 where $\0_{X^2}$ and $\0_{Y^2}$ are the zero functions on $X^2$ and $Y^2$ respectively.
Similar to \cite[Lemma~2.3]{HMM}, for every $d \in \lpm_k(X,d_X)$, if $\|T(\0_{X^2})\| < \|d\|$,
 then $\|d\| \leq \|T(d)\|$,
 and for every $\rho \in \lpm_k(Y,d_Y)$, if $\|T^{-1}(\0_{Y^2})\| < \|\rho\|$,
 then $\|\rho\| \leq \|T^{-1}(\rho)\|$.
Supposing that $T^{-1}(\0_{Y^2}) \neq \0_{X^2}$, we can get that
 $$\|T^{-1}(\0_{Y^2})\| > 0 = \|\0_{Y^2}\| = \|T(T^{-1}(\0_{Y^2}))\|.$$
Thus $\|T^{-1}(\0_{Y^2})\| \leq \|T(\0_{X^2})\|$ and $0 < \lipc(T(\0_{X^2})) < k$.
Let
 $$t = \frac{\lipc(T(\0_{X^2})) + k}{2\lipc(T(\0_{X^2}))} > 1,$$
 so $tT(\0_{X^2}) \in \lpm_k(Y,d_Y)$ and
 $$\|T^{-1}(\0_{Y^2})\| \leq \|T(\0_{X^2})\| < t\|T(\0_{X^2})\| = \|tT(\0_{X^2})\|.$$
Hence $\|tT(\0_{X^2})\| \leq \|T^{-1}(tT(\0_{X^2}))\|$.
Since $T$ is an isometry,
 \begin{align*}
  t\|T(\0_{X^2})\| &> (t - 1)\|T(\0_{X^2})\| = \|(t - 1)T(\0_{X^2})\| = \|tT(\0_{X^2}) - T(\0_{X^2})\|\\
  &= \|T^{-1}(tT(\0_{X^2})) - T^{-1}(T(\0_{X^2}))\| = \|T^{-1}(tT(\0_{X^2}))\|\\
  &\geq \|tT(\0_{X^2})\| = t\|T(\0_{X^2})\|,
 \end{align*}
 which is a contradiction.
It follows that $T^{-1}(\0_{Y^2}) = \0_{X^2}$,
 and thus $T(\0_{X^2}) = \0_{Y^2}$.
The rest of proof shall be left to the readers.
\end{proof}

Combining the above proposition and the same argument as the Mazur-Ulam theorem \cite{MU} (cf.~Theorem~1.3.5 of \cite{FJ}), we can establish the following:

\begin{proposition}\label{scalar}
The isometry $T$ is scalar-preserving,
 that is, for every $d \in \metr(X)$ and every $t \geq 0$ with $td \in \metr(X)$, $T(td) = tT(d)$.
\end{proposition}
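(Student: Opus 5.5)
The plan is to run the Mazur--Ulam midpoint argument inside the convex cone (or convex set) $\metr(X)$, using Proposition~\ref{norm} to pin down the origin. Fix $d \in \metr(X)$ and $t \geq 0$ with $td \in \metr(X)$. Since $\metr(X)$ contains $\0_{X^2}$ and is convex (a cone in the $\upm$ and $\lpm$ cases, a convex set containing zero for $\lpm_k$), every $sd$ with $0 \leq s \leq t$ lies in $\metr(X)$. So it suffices to prove $T(sd) = sT(d)$ along the segment $\{sd \mid 0 \le s \le \max\{1,t\}\}$. The same must hold for $T^{-1}$ on the $Y$ side.

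\textbf{Step 1 (midpoints).} First I show that $T$ preserves midpoints of segments lying in $\metr(X)$: for $d_1,d_2 \in \metr(X)$, $T((d_1+d_2)/2) = (T(d_1)+T(d_2))/2$. Here I would copy the proof of the Mazur--Ulam theorem (Theorem~1.3.5 of \cite{FJ}). Set $W_1 = \{u \mid \|u-d_1\| = \|u-d_2\| = \|d_1-d_2\|/2\}$ and define $W_{n+1}$ as the set of $u\in W_n$ with $\|u-v\|\le \operatorname{diam}W_n/2$ for all $v\in W_n$. The diameters halve, and the midpoint lies in every $W_n$ because the reflection $u\mapsto d_1+d_2-u$ preserves each $W_n$. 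The isometry $T$ carries these sets onto the corresponding sets built from $T(d_1),T(d_2)$.

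\textbf{Step 2 (the obstacle).} This is the step I expect to be hardest. The ambient sets are not linear spaces, so the reflection $u \mapsto d_1+d_2-u$ need not map $\metr(X)$ into itself. I would restrict to the special case needed, namely $d_1=\0_{X^2}$ and $d_2=d$, which by Proposition~\ref{norm} maps to $\0_{Y^2}$ and $T(d)$. I would then carry out the construction not for the full reflection but only inside $\metr(X)$.

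The cheap alternative I would try first avoids reflections entirely. Let $s\in[0,1]$ and $e=T(sd)$. By Proposition~\ref{norm}, $\|e\|=s\|d\|$, and by isometry $\|T(d)-e\|=(1-s)\|d\|$. Hence $e$ lies in the metric segment between $\0_{Y^2}$ and $\rho=T(d)$. In the sup-norm such points need not be unique, so I would apply the same reasoning to $T^{-1}$ and to the points $s'd$ for all $s'$ simultaneously. This yields a map $s\mapsto T(sd)$ that is an isometric embedding of $[0,\|d\|]$ with $T(0)=0$. I would then compare it with $sT(d)$ through the pseudometric structure: evaluate at a pair $(x,y)$ where $\rho(x,y)$ nearly attains $\|\rho\|$ to get $e(x,y)\ge s\rho(x,y)-$small. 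For the reverse inequality I would use the pseudometrics $\rho\pm$ perturbations available in $\metr(Y)$ (for instance $\max$-type combinations, which stay in the cone) to force $e\le s\rho$ pointwise. If this direct approach fails, I fall back to the Mazur--Ulam sets computed within the convex set $\metr(Y)$, where the reflection $u\mapsto \rho-u$ preserves the relevant set $\{u\colon 0\le u\le\rho\}$ of pseudometrics dominated by $\rho$.

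\textbf{Step 3 (conclusion).} Once $T(sd)=sT(d)$ holds for $s\in[0,1]$, the case $t>1$ follows by applying it to $td$ with $s=1/t$: $T(d)=T((1/t)\,td)=(1/t)T(td)$. This gives the proposition.
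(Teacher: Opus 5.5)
Your Step~1 is the route the paper takes: its entire justification is Proposition~\ref{norm} plus ``the same argument as the Mazur--Ulam theorem''. As you notice in Step~2, that argument needs the reflection $u\mapsto d_1+d_2-u$ to map the domain into itself, and $\metr(X)$ is not closed under it. You never supply a replacement, so the proposal has a genuine gap exactly at the step that carries the content. The difficulty is not cosmetic. If the sets $W_n$ are formed inside $\metr(X)$, which is what makes $T$ carry them onto the corresponding sets in $\metr(Y)$, they need not shrink to $d/2$. On $X=\{x,y,z\}$ take $d(x,y)=d(y,z)=1$ and $d(x,z)=0$, rescaled by a small factor in the $\lpm_k$ case. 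Then $W_1$ consists of the pseudometrics $u$ with $u(x,y)=u(y,z)=1/2$ and $0\le u(x,z)\le 1/2$, and its diameter is $1/2$. Consequently $W_2$ is the single pseudometric with $u(x,z)=1/4$, whereas $d/2$ has $u(x,z)=0$. So $d/2\in W_1$ but $d/2\notin W_2$, contradicting your claim that the midpoint lies in every $W_n$. The construction only tells you where $T$ sends that other pseudometric, not where it sends $d/2$.

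Neither fallback closes the gap. The ``cheap alternative'' uses only $\|e\|=s\|\rho\|$, $\|\rho-e\|=(1-s)\|\rho\|$, and the fact that $s\mapsto T(sd)$ is an isometric embedding. But $s\mapsto\min\{s\|\rho\|,\rho\}$ has all of these properties and stays in $\metr(Y)$, since truncating a pseudometric at a constant level gives a pseudometric dominated by it. It differs from $s\rho$ unless $\rho$ takes only the values $0$ and $\|\rho\|$. To force $e\le s\rho$ pointwise you would need the distances $\|e-v\|=\|sd-T^{-1}(v)\|$ for other test pseudometrics $v\in\metr(Y)$. At this stage the only points of $\metr(Y)$ whose preimages you know are $\0_{Y^2}$ and the points $T(td)$ of the curve itself. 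Your ``max-type perturbations'' of $\rho$ have unknown preimages, so no inequality follows.

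The second fallback rests on a false claim: for a pseudometric $u$ with $0\le u\le\rho$, the function $\rho-u$ need not satisfy the triangle inequality. On three points let $\rho(x,y)=\rho(y,z)=1$, $\rho(x,z)=2$ and $u(x,y)=u(y,z)=1$, $u(x,z)=0$. Then $(\rho-u)(x,z)=2>0=(\rho-u)(x,y)+(\rho-u)(y,z)$. Moreover, the order interval $\{u : 0\le u\le\rho\}$ is not defined metrically from $\0_{Y^2}$ and $\rho$, so nothing guarantees that $T$ respects it.

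What is missing is a genuine substitute for the reflection: an argument that uses $T$ on pseudometrics off the ray through $d$. The paper's proof is only the one-line appeal to Mazur--Ulam and does not address this point either, so your Step~2 has located a real hole rather than a detour.
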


For a space $Z$ and a pseudometric $d \in \pseudo(Z)$, let
 $$B_d(z,r) = \{w \in Z \mid d(z,w) < r\} \text{ and } \overline{B}_d(z,r) = \{w \in Z \mid d(z,w) \leq r\}.$$
The following is a key lemma for our argument,
 that can be proven by applying the similar method to Lemma~3.5 of \cite{Kos29}.

\begin{lemma}\label{transl.}
Let $Z = (Z,\mathcal{U}_Z)$ be a non-degenerate space.
For each pseudometric $d \in \upm(Z,\mathcal{U}_Z)$ and each pair $(x,y) \in Z^2$ of distinct points, there is $\rho \in \upm(Z,\mathcal{U}_Z)$ such that
\begin{itemize}
 \item[($\ast$)] for every pair $(z,w) \in Z^2 \setminus \{(x,y),(y,x)\}$,
 $$\rho(x,y) \geq \rho(z,w) \text{ and } d(x,y) + \rho(x,y) > d(z,w) + \rho(z,w).$$
\end{itemize}
Additionally, in the case where $Z = (Z,d_Z)$ is a metric space and $d \in \lpm(Z,d_Z)$, $\rho$ can be chosen as a lipschitz metric,
 and hence $d + \rho \in \lpm(Z,d_Z)$.
\end{lemma}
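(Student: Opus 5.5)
The plan is a bump-function construction in the spirit of Lemma~3.5 of \cite{Kos29}. I build a real function $u$ on $Z$ that takes its maximum only at $x$ and its minimum only at $y$. Then I set $\rho(z,w)=C|u(z)-u(w)|$ for a large constant $C$. Such a $\rho$ is automatically a pseudometric, and it is uniformly (respectively, lipschitz) continuous whenever $u$ is.

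\emph{Step 1.} Fix an auxiliary bounded pseudometric $e$ with $e(x,z)>0$ for all $z\neq x$ and $e(y,z)>0$ for all $z\neq y$. In the metric case take $e=\min\{d_Z,1\}$, which is lipschitz. Put $\sigma=d+e$, so that $\sigma\ge d$ and $r=\sigma(x,y)/2>0$. Define
$$u(z)=\max\{0,r-\sigma(x,z)\}-\max\{0,r-\sigma(y,z)\}.$$
Each term is $1$-lipschitz with respect to $\sigma$, so $u$ is uniformly continuous, and lipschitz in the metric case with $\lipc(u)\le 2(\lipc(d)+1)$. Its values satisfy $u(x)=r$, $u(y)=-r$ and $|u|\le r$, since the two balls $B_\sigma(x,r)$ and $B_\sigma(y,r)$ are disjoint. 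By the same disjointness,
$$r-u(z)\ge\min\{\sigma(x,z),r\}\quad\text{and}\quad r+u(w)\ge\min\{\sigma(y,w),r\}.$$

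\emph{Step 2.} Put $\rho(z,w)=C|u(z)-u(w)|$. Then $\rho(x,y)=2Cr\ge\rho(z,w)$ for every pair. By symmetry of $\rho$, we may assume $u(z)\le u(w)$ when estimating $\rho(z,w)$. Then
$$\rho(x,y)-\rho(z,w)=C\bigl[(r-u(w))+(r+u(z))\bigr]\ge C\bigl(\min\{\sigma(x,w),r\}+\min\{\sigma(y,z),r\}\bigr).$$
On the other side, the triangle inequality gives $d(z,w)-d(x,y)\le d(x,w)+d(y,z)$.

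\emph{Step 3.} Choose $C>\max\{1,2\|d\|/r\}$. If $\sigma(x,w)\ge r$ or $\sigma(y,z)\ge r$, the right-hand side of Step~2 is at least $Cr>2\|d\|\ge d(z,w)-d(x,y)$. Otherwise the right-hand side is at least $C(\sigma(x,w)+\sigma(y,z))\ge C(d(x,w)+d(y,z))$. This strictly exceeds $d(x,w)+d(y,z)$ unless $w=x$ and $z=y$, because $e$ separates $x$ and $y$ from all other points. Hence $d+\rho$ attains its maximum strictly at $\{(x,y),(y,x)\}$. The pairs $z=w$ are trivial, since $d+\rho$ vanishes there while $\rho(x,y)>0$. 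In the metric case $d+\rho$ is lipschitz with constant at most $\lipc(d)+2C\lipc(u)$, so $d+\rho\in\lpm(Z,d_Z)$. If a genuine metric $\rho$ is required, I would try to perturb $\rho$ by a small lipschitz metric that is itself maximized at $\{x,y\}$, for instance a small multiple of $\min\{d_Z,\cdot\}$ suitably capped, and recheck the strict inequalities.

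The main obstacle is Step~1 for a general uniform space. The strict inequality at pairs $(z,y)$ with $d(x,z)=0$ and $z\neq x$ forces $\rho(z,y)<\rho(x,y)$ for every $z\neq x$. So some uniformly continuous pseudometric must separate $x$ from every other point. In the metric case this is free, taking $e=\min\{d_Z,1\}$. In general I would try to obtain $e$ from a countable family of entourages together with Theorem~\ref{ext.} applied to the closed set $\{x,y\}$. I expect this step to need either an extra hypothesis or a modified reduction, since in a non-first-countable uniform space no uniformly continuous pseudometric need separate $x$ from all other points.
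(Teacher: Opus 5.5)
In the metric setting your argument is correct, and it takes a genuinely different and much lighter route than the paper's. With $e=\min\{d_Z,1\}$, the disjointness of $B_\sigma(x,r)$ and $B_\sigma(y,r)$ gives your one-sided bounds on $u$. Combined with $d(z,w)-d(x,y)\le d(x,w)+d(y,z)$ and $C>\max\{1,2\|d\|/r\}$, they give ($\ast$). The bound $\rho\le 2C\sigma$ gives uniform continuity and $\lipc(\rho)\le 2C(\lipc(d)+1)$.

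The paper instead sets $\rho=e+\sum_n\rho_n/2^n$. Each $\rho_n$ is a three-valued pseudometric on the closed set $W_n$ (two small $d'$-balls about $x,y$ against the complement of two larger ones), extended to $Z^2$ by Theorem~\ref{ext.}. The strict peak and the lipschitz bound are then checked through two long case analyses over dyadic $d'$-annuli. Your single tent function needs no extension theorem and only a three-line verification.

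The one thing the paper's series gives for free is that $\rho$ is a metric, because $e$ is a summand. Your hedged fix needs neither smallness nor rechecking: $\rho+\min\{d_Z/d_Z(x,y),1\}$ is a lipschitz metric. Since the added term attains its norm $1$ at $(x,y)$, both inequalities in ($\ast$) persist.

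Your worry about general uniform spaces is not a gap in your proof but a defect of the statement, and your own observation proves it. Since $d+\rho$ is a pseudometric, ($\ast$) forces $(d+\rho)(x,z)>0$ for $z\neq x$ and $(d+\rho)(y,z)>0$ for $z\neq y$. So $\{x\}$ and $\{y\}$ must be zero sets of continuous functions, i.e.\ $G_\delta$-points; first countability is not the exact condition.

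Here is a counterexample. Take the compact space $Z=[0,\omega_1]$ with its unique uniformity, so $\upm(Z,\mathcal{U}_Z)=\pseudo(Z)$, and let $x=\omega_1$, $y=0$, $d=0$. For any $\rho$, the zero set of $\rho(\omega_1,\cdot)$ is a $G_\delta$ containing $\omega_1$. Hence it contains a tail $(\alpha,\omega_1]$, and so some $z\notin\{x,y\}$. Then $\rho(z,y)\ge\rho(x,y)-\rho(x,z)=\rho(x,y)$, contradicting ($\ast$).

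The paper's proof of the first part has exactly this hole. Its $e$ only satisfies $e(x,y)=\|e\|=1$, so $d'=d+e$ may vanish at some $(x,z)$ with $z\neq x$. Such $z$ lie in none of the regions $\overline{B}_{d'}(x_k,a/2^{n-1})\setminus B_{d'}(x_k,a/2^n)$ or $Z\setminus(\overline{B}_{d'}(x,a/2)\cup\overline{B}_{d'}(y,a/2))$, so the case analysis never treats them. For them $\rho_n(z,y)=4b$ for every $n$ and $e(z,y)=1$, whence $d(z,y)+\rho(z,y)=d(x,y)+\rho(x,y)$.

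Conversely, your countable-entourage idea works exactly when $x$ and $y$ are $G_\delta$-points. If $\{x\}=\bigcap_n U_n$, choose by \cite[Corollary~8.1.11]{En} uniformly continuous pseudometrics $\sigma_n$ with $B_{\sigma_n}(x,1)\subset U_n$. Put $e=\sum_n2^{-n}\min\{\sigma_n,1\}$ plus the analogous sum for $y$, and your Steps~1--3 go through verbatim. So the correct general version of the lemma assumes that $x$ and $y$ are $G_\delta$-points. The metric case, which is what Theorems~B and~C use, is unaffected.
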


\begin{proof}
Firstly, we will prove the former part.
Using Theorem~\ref{ext.}, we can easily find a pseudometric $e \in \upm(Z,\mathcal{U}_Z)$ such that $e(x,y) = \|e\| = 1$.
Let $d' = d + e$, $a = d'(x,y)$ and
 $$b = \min\bigg\{\sup_{z \in Z} d'(x,z),\sup_{z \in Z} d'(y,z)\bigg\}.$$
For each natural number $n \in \N$, take the closed subset
 $$W_n = Z \setminus (B_{d'}(x,a/2^n) \cup B_{d'}(y,a/2^n)) \cup \overline{B}_{d'}(x,a/2^{n + 1}) \cup \overline{B}_{d'}(y,a/2^{n + 1})$$
 and define a pseudometric $\rho_n$ on $W_n$ as follows:
\begin{enumerate}
\renewcommand{\theenumi}{\roman{enumi}}
 \item $\rho_n(z,w) = 4b$ if $z \in \overline{B}_{d'}(x,a/2^{n + 1})$ and $w \in \overline{B}_{d'}(y,a/2^{n + 1})$;
 \item $\rho_n(z,w) = 2b$ if $z \in Z \setminus (B_{d'}(x,a/2^n) \cup B_{d'}(y,a/2^n))$ and $w \in \overline{B}_{d'}(x,a/2^{n + 1}) \cup \overline{B}_{d'}(y,a/2^{n + 1})$;
 \item $\rho_n(z,w) = 0$ if $z, w \in Z \setminus (B_{d'}(x,a/2^n) \cup B_{d'}(y,a/2^n))$, if $z, w \in \overline{B}_{d'}(x,a/2^{n + 1})$,
 or if $z, w \in \overline{B}_{d'}(y,a/2^{n + 1})$.
\end{enumerate}
Observe the following:
\begin{enumerate}
\renewcommand{\theenumi}{\roman{enumi}}
 \item If $z \in \overline{B}_{d'}(x,a/2^{n + 1})$ and $w \in \overline{B}_{d'}(y,a/2^{n + 1})$,
 then
 $$\frac{\rho_n(z,w)}{d'(z,w)} \leq \frac{4b}{a(1 - 2^{-n})} \leq \frac{8b}{a}.$$
 \item If $z \in Z \setminus (B_{d'}(x,a/2^n) \cup B_{d'}(y,a/2^n))$ and $w \in \overline{B}_{d'}(x,a/2^{n + 1}) \cup \overline{B}_{d'}(y,a/2^{n + 1})$,
 then
 $$\frac{\rho_n(z,w)}{d'(z,w)} \leq \frac{2b}{a(2^{-n} - 2^{-n - 1})} = \frac{2^{n + 2}b}{a}.$$
\end{enumerate}
Since $d' \in \upm(Z,\mathcal{U}_Z)$,
 $\rho_n \in \upm(W_n,\mathcal{U}_Z|_{W_n})$.
By virtue of Theorem~\ref{ext.}, each $\rho_n$ can be extended over $Z^2$ such that $\rho_n$ is uniformly continuous with respect to $\mathcal{U}_Z$ and $\|\rho_n\| = 4b$.
Since $\upm(Z,\mathcal{U}_Z)$ is complete,
 the sum $\sum_{n = 1}^\infty \rho_n/2^n$ is converging in it.
Then
 $$\rho = e + \sum_{n = 1}^\infty \frac{\rho_n}{2^n} \in \upm(Z,\mathcal{U}_Z)$$
 is the desired pseudometric.
Indeed,
 $$\rho(x,y) = e(x,y) + \sum_{n = 1}^\infty \frac{\rho_n(x,y)}{2^n} = 1 + \sum_{n = 1}^\infty \frac{4b}{2^n} = 4b + 1 = \|\rho\|.$$
Putting $x = x_0$ and $y = x_1$, we have the following:
\begin{enumerate}
 \item $z = x_k$ and $w = x_{1 - k}$, $k \in \{0,1\}$.
 $$d(z,w) + \rho(z,w) = d(x,y) + \rho(x,y) = d(x,y) + 4b + 1.$$
 \item $z = x_k$, $w \in Z \setminus (\overline{B}_{d'}(x,a/2) \cup \overline{B}_{d'}(y,a/2))$, $k \in \{0,1\}$.
 \begin{align*}
  d(z,w) + \rho(z,w) &= d'(x_k,w) + \sum_{i = 1}^\infty \frac{\rho_i(x_k,w)}{2^i} \leq d'(x,y) + b + \sum_{i = 1}^\infty \frac{2b}{2^i}\\
  &= d(x,y) + 1 + b + \sum_{i = 1}^\infty \frac{2b}{2^i} = d(x,y) + 3b + 1.
 \end{align*}
 \item $z = x_k$, $w \in \overline{B}_{d'}(x_k,a/2^{n - 1}) \setminus B_{d'}(x_k,a/2^n)$, $k \in \{0,1\}$, $n \geq 2$.
 \begin{align*}
  d(z,w) + \rho(z,w) &= d'(x_k,w) + \sum_{i = 1}^\infty \frac{\rho_i(x_k,w)}{2^i} \leq \frac{a}{2^{n - 1}} + \frac{4b}{2^{n - 1}} + \sum_{i = n}^\infty \frac{2b}{2^i}\\
  &\leq \frac{4b}{2^{n + 1}} + \frac{4b}{2^{n - 1}} + \sum_{i = n + 1}^\infty \frac{4b}{2^i} < \sum_{i = 1}^\infty \frac{4b}{2^i} = 4b.
 \end{align*}
 \item $z = x_k$, $w \in \overline{B}_{d'}(x_{1 - k},a/2^{n - 1}) \setminus B_{d'}(x_{1 - k},a/2^n)$, $k \in \{0,1\}$, $n \geq 2$.
 \begin{align*}
  d(z,w) + \rho(z,w) &= d'(x_k,w) + \sum_{i = 1}^\infty \frac{\rho_i(x_k,w)}{2^i} \leq d'(x,y) + d'(x_{1 - k},w) + \sum_{i = 1}^\infty \frac{\rho_i(x_k,w)}{2^i}\\
  &\leq d(x,y) + 1 + \frac{a}{2^{n - 1}} + \sum_{i = 1}^{n - 1} \frac{4b}{2^n} + \sum_{i = n}^\infty \frac{2b}{2^i}\\
  &\leq d(x,y) + 1 + \frac{4b}{2^{n + 1}} + \sum_{i = 1}^{n - 1} \frac{4b}{2^i} + \sum_{i = n + 1}^\infty \frac{4b}{2^i}\\
  &< d(x,y) + 1 + \sum_{i = 1}^\infty \frac{4b}{2^i} = d(x,y) + 4b + 1.
 \end{align*}
 \item $z, w \in Z \setminus (\overline{B}_{d'}(x,a/2) \cup \overline{B}_{d'}(y,a/2))$.
  $$d(z,w) + \rho(z,w) = d'(z,w) + \sum_{i = 1}^\infty \frac{\rho_i(z,w)}{2^i} \leq d'(x,y) + 2b = d(x,y) + 2b + 1.$$
 \item $z, w \in \overline{B}_{d'}(x_k,a/2^{n - 1}) \setminus B_{d'}(x_k,a/2^n)$, $k \in \{0,1\}$, $n \geq 2$.
 \begin{align*}
  d(z,w) + \rho(z,w) &= d'(z,w) + \sum_{i = 1}^\infty \frac{\rho_i(z,w)}{2^i} \leq d'(x_k,z) + d'(x_k,w) + \sum_{i = 1}^\infty \frac{\rho_i(z,w)}{2^i}\\
  &\leq \frac{a}{2^{n - 2}} + \frac{4b}{2^{n - 1}} \leq \frac{b}{2^{n - 2}} + \frac{4b}{2^{n - 1}} \leq 3b.
 \end{align*}
 \item $z \in \overline{B}_{d'}(x_k,a/2^{n - 1}) \setminus B_{d'}(x_k,a/2^n)$, $w \in \overline{B}_{d'}(x_{1 - k},a/2^{n - 1}) \setminus B_{d'}(x_{1 - k},a/2^n)$, $k \in \{0,1\}$, $n \geq 2$.
 \begin{align*}
  d(z,w) + \rho(z,w) &= d'(z,w) + \sum_{i = 1}^\infty \frac{\rho_i(z,w)}{2^i}\\
  &\leq d'(x_k,z) + d'(x,y) + d'(x_{1 - k},w) + \sum_{i = 1}^\infty \frac{\rho_i(z,w)}{2^i}\\
  &\leq d(x,y) + 1 + \frac{a}{2^{n - 2}} + \sum_{i = 1}^{n - 1} \frac{4b}{2^i} \leq d(x,y) + 1 + \frac{4b}{2^n} + \sum_{i = 1}^{n - 1} \frac{4b}{2^i}\\
  &< d(x,y) + 1 + \sum_{i = 1}^\infty \frac{4b}{2^i} = d(x,y) + 4b + 1.
 \end{align*}
 \item $z \in Z \setminus (\overline{B}_{d'}(x,a/2) \cup \overline{B}_{d'}(y,a/2))$ and $w \in (\overline{B}_{d'}(x,a/2) \cup \overline{B}_{d'}(y,a/2)) \setminus (B_{d'}(x,a/4) \cup B_{d'}(y,a/4))$.
 \begin{align*}
  d(z,w) + \rho(z,w) &= d'(z,w) + \sum_{i = 1}^\infty \frac{\rho_i(z,w)}{2^i} \leq d'(x,y) + b + \frac{a}{2} + \frac{4b}{2}\\
  &\leq d(x,y) + 1 + b + \frac{b}{2} + \frac{4b}{2} = d(x,y) + \frac{7b}{2} + 1.
 \end{align*}
 \item $z \in \overline{B}_{d'}(x_k,a/2^{n - 1}) \setminus B_{d'}(x_k,a/2^n)$, $w \in \overline{B}_{d'}(x_k,a/2^n) \setminus B_{d'}(x_k,a/2^{n + 1})$, $k \in \{0,1\}$, $n \geq 2$.
 \begin{align*}
  d(z,w) + \rho(z,w) &= d'(z,w) + \sum_{i = 1}^\infty \frac{\rho_i(z,w)}{2^i} \leq d'(x_k,z) + d'(x_k,w) + \sum_{i = 1}^\infty \frac{\rho_i(z,w)}{2^i}\\
  &\leq \frac{a}{2^{n - 1}} + \frac{a}{2^n} + \sum_{i = n - 1}^n \frac{4b}{2^i} \leq \frac{b}{2^{n - 1}} + \frac{b}{2^n} + \frac{4b}{2^{n - 1}} + \frac{4b}{2^n} \leq \frac{15b}{4}.
 \end{align*}
 \item $z \in \overline{B}_{d'}(x_k,a/2^{n - 1}) \setminus B_{d'}(x_k,a/2^n)$, $w \in \overline{B}_{d'}(x_{1 - k},a/2^n) \setminus B_{d'}(x_{1 - k},a/2^{n + 1})$, $k \in \{0,1\}$, $n \geq 2$.
 \begin{align*}
  d(z,w) + \rho(z,w) &= d'(z,w) + \sum_{i = 1}^\infty \frac{\rho_i(z,w)}{2^i}\\
  &\leq d'(x_k,z) + d'(x,y) + d'(x_{1 - k},w) + \sum_{i = 1}^\infty \frac{\rho_i(z,w)}{2^i}\\
  &\leq d(x,y) + 1 + \frac{a}{2^{n - 1}} + \frac{a}{2^n} + \sum_{i = 1}^n \frac{4b}{2^i}\\
  &\leq d(x,y) + 1 + \frac{4b}{2^{n + 1}} + \frac{4b}{2^{n + 2}} + \sum_{i = 1}^n \frac{4b}{2^i}\\
  &< d(x,y) + 1 + \sum_{i = 1}^\infty \frac{4b}{2^i} = d(x,y) + 4b + 1.
 \end{align*}
 \item $z \in Z \setminus (\overline{B}_{d'}(x,a/2) \cup \overline{B}_{d'}(y,a/2))$ and $w \in (\overline{B}_{d'}(x,a/2^{m - 1}) \cup \overline{B}_{d'}(y,a/2^{m - 1})) \setminus (B_{d'}(x,a/2^m) \cup B_{d'}(y,a/2^m))$, $m \geq 3$.
 \begin{align*}
  d(z,w) + \rho(z,w) &= d'(z,w) + \sum_{i = 1}^\infty \frac{\rho_i(z,w)}{2^i} \leq d'(x,y) + b + \frac{a}{2^{m - 1}} + \sum_{i = 1}^{m - 2} \frac{2b}{2^i} + \frac{4b}{2^{m - 1}}\\
  &\leq d(x,y) + 1 + b + \frac{4b}{2^{m + 1}} + \sum_{i = 2}^{m - 1} \frac{4b}{2^i} + \frac{4b}{2^{m - 1}}\\
  &= d(x,y) + 1 + \frac{4b}{2} - \frac{4b}{2^2} + \frac{4b}{2^{m + 1}} + \sum_{i = 2}^{m - 1} \frac{4b}{2^i} + \frac{4b}{2^{m - 1}}\\
  &< d(x,y) + 1 + \sum_{i = 1}^\infty \frac{4b}{2^i} = d(x,y) + 4b + 1.
 \end{align*}
 \item $z \in \overline{B}_{d'}(x_k,a/2^{n - 1}) \setminus B_{d'}(x_k,a/2^n)$, $w \in \overline{B}_{d'}(x_k,a/2^{m - 1}) \setminus B_{d'}(x_k,a/2^m)$, $k \in \{0,1\}$, $n \geq 2$, $m \geq n + 2$.
 \begin{align*}
  d(z,w) + \rho(z,w) &= d'(z,w) + \sum_{i = 1}^\infty \frac{\rho_i(z,w)}{2^i} \leq d'(x_k,z) + d'(x_k,w) + \sum_{i = 1}^\infty \frac{\rho_i(z,w)}{2^i}\\
  &\leq \frac{a}{2^{n - 1}} + \frac{a}{2^{m - 1}} + \frac{4b}{2^{n - 1}} + \sum_{i = n}^{m - 2} \frac{2b}{2^i} + \frac{4b}{2^{m - 1}}\\
  &\leq \frac{4b}{2^{n + 1}} + \frac{4b}{2^{m + 1}} + \frac{4b}{2^{n - 1}} + \sum_{i = n + 1}^{m - 1} \frac{4b}{2^i}+ \frac{4b}{2^{m - 1}}\\
  &= \frac{4b}{2^n} - \frac{4b}{2^{n + 1}} + \frac{4b}{2^{m + 1}} + \frac{4b}{2^{n - 1}} + \sum_{i = n + 1}^{m - 1} \frac{4b}{2^i} + \frac{4b}{2^{m - 1}}\\
  &< \sum_{i = 1}^\infty \frac{4b}{2^i} = 4b.
 \end{align*}
 \item $z \in \overline{B}_{d'}(x_k,a/2^{n - 1}) \setminus B_{d'}(x_k,a/2^n)$, $w \in \overline{B}_{d'}(x_{1 - k},a/2^{m - 1}) \setminus B_{d'}(x_{1 - k},a/2^m)$, $k \in \{0,1\}$, $n \geq 2$, $m \geq n + 2$.
 \begin{align*}
  d(z,w) + \rho(z,w) &= d'(z,w) + \sum_{i = 1}^\infty \frac{\rho_i(z,w)}{2^i}\\
  &\leq d'(x_k,z) + d'(x,y) + d'(x_{1 - k},w) + \sum_{i = 1}^\infty \frac{\rho_i(z,w)}{2^i}\\
  &\leq d(x,y) + 1 + \frac{a}{2^{n - 1}} + \frac{a}{2^{m - 1}} + \sum_{i = 1}^{n - 1} \frac{4b}{2^i} + \sum_{i = n}^{m - 2} \frac{2b}{2^i} + \frac{4b}{2^{m - 1}}\\
  &\leq d(x,y) + 1 + \frac{4b}{2^{n + 1}} + \frac{4b}{2^{m + 1}} + \sum_{i = 1}^{n - 1} \frac{4b}{2^i} + \sum_{i = n + 1}^{m - 1} \frac{4b}{2^i} + \frac{4b}{2^{m - 1}}\\
  &= d(x,y) + 1 + \frac{4b}{2^n} - \frac{4b}{2^{n + 1}} + \frac{4b}{2^{m + 1}} + \sum_{i = 1}^{n - 1} \frac{4b}{2^i} + \sum_{i = n + 1}^{m - 1} \frac{4b}{2^i} + \frac{4b}{2^{m - 1}}\\
  &< d(x,y) + 1 + \sum_{i = 1}^\infty \frac{4b}{2^i} = d(x,y) + 4b + 1.
 \end{align*}
\end{enumerate}
Consequently, for every pair $(z,w) \in Z^2 \setminus \{(x,y),(y,x)\}$,
 $$d(x,y) + \rho(x,y) = d(x,y) + 4b + 1 = \|d + \rho\| > d(z,w) + \rho(z,w).$$

In the latter case, replace the above metric $e$ with
 $$e = \min\bigg\{\frac{d_Z}{d_Z(x,y)},1\bigg\},$$
 and define the metric $d'$ on $Z$, the positive numbers $a, b > 0$, the closed subsets $W_n \subset Z$, and the metrics $\rho_n$ on $W_n$ by the same way.
Then $e \in \lpm(Z)$ with $e(x,y) = \|e\| = 1$ and $\lipc(e) = 1/d_Z(x,y)$.
For any $z, w \in W_n$ with $z \neq w$, we will check the following:
\begin{enumerate}
\renewcommand{\theenumi}{\roman{enumi}}
 \item When $z \in \overline{B}_{d'}(x,a/2^{n + 1})$ and $w \in \overline{B}_{d'}(y,a/2^{n + 1})$,
 $$\frac{\rho_n(z,w)}{d_Z(z,w)} = \frac{\rho_n(z,w)}{d'(z,w)} \frac{d'(z,w)}{d_Z(z,w)} \leq \frac{4b}{a(1 - 2^{-n})}(\lipc(d) + \lipc(e)) \leq \frac{8b(\lipc(d) + \lipc(e))}{a}.$$
 \item When $z \in Z \setminus (B_{d'}(x,a/2^n) \cup B_{d'}(y,a/2^n))$ and $w \in \overline{B}_{d'}(x,a/2^{n + 1}) \cup \overline{B}_{d'}(y,a/2^{n + 1})$,
 $$\frac{\rho_n(z,w)}{d_Z(z,w)} = \frac{\rho_n(z,w)}{d'(z,w)} \frac{d'(z,w)}{d_Z(z,w)} \leq \frac{2b}{a(2^{-n} - 2^{-n - 1})}(\lipc(d) + \lipc(e)) = \frac{2^{n + 2}b(\lipc(d) + \lipc(e))}{a}.$$
 \item When $z, w \in Z \setminus (B_{d'}(x,a/2^n) \cup B_{d'}(y,a/2^n))$,
 when $z, w \in \overline{B}_{d'}(x,a/2^{n + 1})$,
 or when $z, w \in \overline{B}_{d'}(y,a/2^{n + 1})$,
 $$\frac{\rho_n(z,w)}{d_Z(z,w)} = 0.$$
\end{enumerate}
Applying Theorem~\ref{ext.}, we can extend $\rho_n$ over $Z^2$ such that $\|\rho_n\| = 4b$ and
 $$\lipc(\rho_n) \leq \max\bigg\{\frac{8b(\lipc(d) + \lipc(e))}{a},\frac{2^{n + 2}b(\lipc(d) + \lipc(e))}{a}\bigg\} \leq \frac{2^{n + 2}b(\lipc(d) + \lipc(e))}{a}.$$

As has been observed,
 the metric
 $$\rho = e + \sum_{n = 1}^\infty \frac{\rho_n}{2^n} \in \upm(Z,d_Z)$$
 fulfills the property ($\ast$).
It remains to show that $\rho$ is lipschitz.
For all distinct points $z, w \in Z$, verify the following:
\begin{enumerate}
 \item $z = x_k$, $w = x_{1 - k}$, $k \in \{0,1\}$.
 \begin{align*}
  \frac{\sum_{i = 1}^\infty \rho_i(z,w)/2^i}{d_Z(z,w)} &= \frac{\sum_{i = 1}^\infty \rho_i(x,y)/2^i}{d_Z(x,y)} = \frac{\sum_{i = 1}^\infty 4b/2^i}{d_Z(x,y)} = \frac{4b}{d'(x,y)} \frac{d'(x,y)}{d_Z(x,y)}\\
  &\leq \frac{4b(\lipc(d) + \lipc(e))}{a}.
 \end{align*}
 \item $z = x_k$, $w \in Z \setminus (\overline{B}_{d'}(x,a/2) \cup \overline{B}_{d'}(y,a/2))$, $k \in \{0,1\}$.
 \begin{align*}
  \frac{\sum_{i = 1}^\infty \rho_i(z,w)/2^i}{d_Z(z,w)} &= \frac{\sum_{i = 1}^\infty \rho_i(x_k,w)/2^i}{d_Z(x_k,w)} = \frac{\sum_{i = 1}^\infty 2b/2^i}{d_Z(x_k,w)} = \frac{2b}{d'(x_k,w)} \frac{d'(x_k,w)}{d_Z(x_k,w)}\\
  &\leq \frac{2b(\lipc(d) + \lipc(e))}{a/2} = \frac{4b(\lipc(d) + \lipc(e))}{a}.
 \end{align*}
 \item $z = x_k$, $w \in \overline{B}_{d'}(x_k,a/2^{n - 1}) \setminus B_{d'}(x_k,a/2^n)$, $k \in \{0,1\}$, $n \geq 2$.
 \begin{align*}
  \frac{\sum_{i = 1}^\infty \rho_i(z,w)/2^i}{d_Z(z,w)} &= \frac{\sum_{i = 1}^\infty \rho_i(x_k,w)/2^i}{d_Z(x_k,w)} = \frac{\rho_{n - 1}(x_k,w)/2^{n - 1} + \sum_{i = n}^\infty 2b/2^i}{d_Z(x_k,w)}\\
  &= \frac{\rho_{n - 1}(x_k,w)/2^{n - 1}}{d_Z(x_k,w)} + \frac{2b/2^{n - 1}}{d'(x_k,w)} \frac{d'(x_k,w)}{d_Z(x_k,w)}\\
  &\leq \frac{2^{n + 1}b(\lipc(d) + \lipc(e))/2^{n - 1}}{a} + \frac{2b(\lipc(d) + \lipc(e))/2^{n - 1}}{a/2^n}\\
  &= \frac{8b(\lipc(d) + \lipc(e))}{a}.
 \end{align*}
 \item $z = x_k$, $w \in \overline{B}_{d'}(x_{1 - k},a/2^{n - 1}) \setminus B_{d'}(x_{1 - k},a/2^n)$, $k \in \{0,1\}$, $n \geq 2$.
 \begin{align*}
  \frac{\sum_{i = 1}^\infty \rho_i(z,w)/2^i}{d_Z(z,w)} &= \frac{\sum_{i = 1}^\infty \rho_i(x_k,w)/2^i}{d_Z(x_k,w)} = \frac{\sum_{i = 1}^{n - 2} 4b/2^i + \rho_{n - 1}(x_k,w)/2^{n - 1} + \sum_{i = n}^\infty 2b/2^i}{d_Z(x_k,w)}\\
  &= \frac{4b(1 - 1/2^{n - 2})}{d'(x_k,w)} \frac{d'(x_k,w)}{d_Z(x_k,w)} + \frac{\rho_{n - 1}(x_k,w)/2^{n - 1}}{d_Z(x_k,w)} + \frac{2b/2^{n - 1}}{d'(x_k,w)} \frac{d'(x_k,w)}{d_Z(x_k,w)}\\
  &\leq \frac{4b(\lipc(d) + \lipc(e))(1 - 1/2^{n - 2})}{a(1 - 1/2^{n - 1})} + \frac{2^{n + 1}b(\lipc(d) + \lipc(e))/2^{n - 1}}{a}\\
  &\ \ \ \ \ \ \ \ \ \ \ \ \ \ \ \ \ \ \ \ \ \ \ \ \ \ \ \ \ \ \ \ \ \ \ \ \ \ \ \ \ \ \ \ \ \ \ \ + \frac{2b(\lipc(d) + \lipc(e))/2^{n - 1}}{a(1 - 1/2^{n - 1})}\\
  &\leq \frac{12b(\lipc(d) + \lipc(e))}{a}.
 \end{align*}
 \item $z, w \in Z \setminus (\overline{B}_{d'}(x,a/2) \cup \overline{B}_{d'}(y,a/2))$.
 $$\frac{\sum_{i = 1}^\infty \rho_i(z,w)/2^i}{d_Z(z,w)} = 0.$$
 \item $z, w \in \overline{B}_{d'}(x_k,a/2^{n - 1}) \setminus B_{d'}(x_k,a/2^n)$, $k \in \{0,1\}$, $n \geq 2$.
 $$\frac{\sum_{i = 1}^\infty \rho_i(z,w)/2^i}{d_Z(z,w)} = \frac{\rho_{n - 1}(z,w)/2^{n - 1}}{d_Z(z,w)} \leq \frac{2^{n + 1}b(\lipc(d) + \lipc(e))/2^{n - 1}}{a} = \frac{4b(\lipc(d) + \lipc(e))}{a}.$$
 \item $z \in \overline{B}_{d'}(x_k,a/2^{n - 1}) \setminus B_{d'}(x_k,a/2^n)$, $w \in \overline{B}_{d'}(x_{1 - k},a/2^{n - 1}) \setminus B_{d'}(x_{1 - k},a/2^n)$, $k \in \{0,1\}$, $n \geq 2$.
 \begin{align*}
  \frac{\sum_{i = 1}^\infty \rho_i(z,w)/2^i}{d_Z(z,w)} &= \frac{\sum_{i = 1}^{n - 2} 4b/2^i + \rho_{n - 1}(z,w)/2^{n - 1}}{d_Z(z,w)}\\
  &= \frac{4b(1 - 1/2^{n - 2})}{d'(z,w)} \frac{d'(z,w)}{d_Z(z,w)} + \frac{\rho_{n - 1}(z,w)/2^{n - 1}}{d_Z(z,w)}\\
  &\leq \frac{4b(\lipc(d) + \lipc(e))(1 - 1/2^{n - 2})}{a(1 - 1/2^{n - 2})} + \frac{2^{n + 1}b(\lipc(d) + \lipc(e))/2^{n - 1}}{a}\\
  &\leq \frac{8b(\lipc(d) + \lipc(e))}{a}.
 \end{align*}
 \item $z \in Z \setminus (\overline{B}_{d'}(x,a/2) \cup \overline{B}_{d'}(y,a/2))$, $w \in (\overline{B}_{d'}(x,a/2) \cup \overline{B}_{d'}(y,a/2)) \setminus (B_{d'}(x,a/4) \cup B_{d'}(y,a/4))$.
 $$\frac{\sum_{i = 1}^\infty \rho_i(z,w)/2^i}{d_Z(z,w)} = \frac{\rho_1(z,w)/2}{d_Z(z,w)} \leq \frac{2^3b(\lipc(d) + \lipc(e))/2}{a} = \frac{4b(\lipc(d) + \lipc(e))}{a}.$$
 \item $z \in \overline{B}_{d'}(x_k,a/2^{n - 1}) \setminus B_{d'}(x_k,a/2^n)$, $w \in \overline{B}_{d'}(x_k,a/2^n) \setminus B_{d'}(x_k,a/2^{n + 1})$, $k \in \{0,1\}$, $n \geq 2$.
 \begin{align*}
  \frac{\sum_{i = 1}^\infty \rho_i(z,w)/2^i}{d_Z(z,w)} &= \frac{\rho_{n - 1}(z,w)/2^{n - 1} + \rho_n(z,w)/2^n}{d_Z(z,w)}\\
  &\leq \frac{2^{n + 1}b(\lipc(d) + \lipc(e))/2^{n - 1}}{a} + \frac{2^{n + 2}b(\lipc(d) + \lipc(e))/2^n}{a}\\
  &= \frac{8b(\lipc(d) + \lipc(e))}{a}.
 \end{align*}
 \item $z \in \overline{B}_{d'}(x_k,a/2^{n - 1}) \setminus B_{d'}(x_k,a/2^n)$, $w \in \overline{B}_{d'}(x_{1 - k},a/2^n) \setminus B_{d'}(x_{1 - k},a/2^{n + 1})$, $k \in \{0,1\}$, $n \geq 2$.
 \begin{align*}
  \frac{\sum_{i = 1}^\infty \rho_i(z,w)/2^i}{d_Z(z,w)} &= \frac{\sum_{i = 1}^{n - 2} 4b/2^i + \rho_{n - 1}(z,w)/2^{n - 1} + \rho_n(z,w)/2^n}{d_Z(z,w)}\\
  &= \frac{4b(1 - 1/2^{n - 2})}{d'(z,w)} \frac{d'(z,w)}{d_Z(z,w)} + \frac{\rho_{n - 1}(z,w)/2^{n - 1}}{d_Z(z,w)} + \frac{\rho_n(z,w)/2^n}{d_Z(z,w)}\\
  &\leq \frac{4b(\lipc(d) + \lipc(e))(1 - 1/2^{n - 2})}{a(1 - 1/2^{n - 1} - 1/2^n)} + \frac{2^{n + 1}b(\lipc(d) + \lipc(e))/2^{n - 1}}{a}\\
  &\ \ \ \ \ \ \ \ \ \ \ \ \ \ \ \ \ \ \ \ \ \ \ \ \ \ \ \ \ \ \ \ \ \ \ \ \ \ \ \ \ \ \ \ \ \ \ \ + \frac{2^{n + 2}b(\lipc(d) + \lipc(e))/2^n}{a}\\
  &\leq \frac{24b(\lipc(d) + \lipc(e))}{a}.
 \end{align*}
 \item $z \in Z \setminus (\overline{B}_{d'}(x,a/2) \cup \overline{B}_{d'}(y,a/2))$, $w \in (\overline{B}_{d'}(x,a/2^{m - 1}) \cup \overline{B}_{d'}(y,a/2^{m - 1})) \setminus (B_{d'}(x,a/2^m) \cup B_{d'}(y,a/2^m))$, $m \geq 3$.
 \begin{align*}
  \frac{\sum_{i = 1}^\infty \rho_i(z,w)/2^i}{d_Z(z,w)} &= \frac{\sum_{i = 1}^{m - 2} 2b/2^i + \rho_{m - 1}(z,w)/2^{m - 1}}{d_Z(z,w)}\\
  &\leq \frac{2b(1 - 1/2^{m - 2})}{d'(z,w)} \frac{d'(z,w)}{d_Z(z,w)} + \frac{\rho_{m - 1}(z,w)/2^{m - 1}}{d_Z(z,w)}\\
  &= \frac{2b(\lipc(d) + \lipc(e))(1 - 1/2^{m - 2})}{a(1 - 1/2 - 1/2^{m - 1})} + \frac{2^{m + 1}b(\lipc(d) + \lipc(e))/2^{m - 1}}{a}\\
  &\leq \frac{12b(\lipc(d) + \lipc(e))}{a}.
 \end{align*}
 \item $z \in \overline{B}_{d'}(x_k,a/2^{n - 1}) \setminus B_{d'}(x_k,a/2^n)$, $w \in \overline{B}_{d'}(x_k,a/2^{m - 1}) \setminus B_{d'}(x_k,a/2^m)$, $k \in \{0,1\}$, $n \geq 2$, $m \geq n + 2$.
 \begin{align*}
  \frac{\sum_{i = 1}^\infty \rho_i(z,w)/2^i}{d_Z(z,w)} &= \frac{\rho_{n - 1}(z,w)/2^{n - 1} + \sum_{i = n}^{m - 2} 2b/2^i + \rho_{m - 1}(z,w)/2^{m - 1}}{d_Z(z,w)}\\
  &= \frac{\rho_{n - 1}(z,w)/2^{n - 1}}{d_Z(z,w)} + \frac{2b(1/2^{n - 1} - 1/2^{m - 2})}{d'(z,w)} \frac{d'(z,w)}{d_Z(z,w)} + \frac{\rho_{m - 1}(z,w)/2^{m - 1}}{d_Z(z,w)}\\
  &\leq \frac{2^{n + 1}b(\lipc(d) + \lipc(e))/2^{n - 1}}{a} + \frac{2b(\lipc(d) + \lipc(e))(1/2^{n - 1} - 1/2^{m - 2})}{a(1/2^n - 1/2^{m - 1})}\\
  &\ \ \ \ \ \ \ \ \ \ \ \ \ \ \ \ \ \ \ \ \ \ \ \ \ \ \ \ \ \ \ \ \ \ \ \ \ \ \ \ \ \ \ \ \ \ \ \ + \frac{2^{m + 1}b(\lipc(d) + \lipc(e))/2^{m - 1}}{a}\\
  &\leq \frac{12b(\lipc(d) + \lipc(e))}{a}.
 \end{align*}
 \item $z \in \overline{B}_{d'}(x_k,a/2^{n - 1}) \setminus B_{d'}(x_k,a/2^n)$, $w \in \overline{B}_{d'}(x_{1 - k},a/2^{m - 1}) \setminus B_{d'}(x_{1 - k},a/2^m)$, $k \in \{0,1\}$, $n \geq 2$, $m \geq n + 2$.
 \begin{align*}
  \frac{\sum_{i = 1}^\infty \rho_i(z,w)/2^i}{d_Z(z,w)} &= \frac{\sum_{i = 1}^{n - 2} 4b/2^i + \rho_{n - 1}(z,w)/2^{n - 1} + \sum_{i = n}^{m - 2} 2b/2^i + \rho_{m - 1}(z,w)/2^{m - 1}}{d_Z(z,w)}\\
  &= \frac{4b(1 - 1/2^{n - 2})}{d'(z,w)} \frac{d'(z,w)}{d_Z(z,w)} + \frac{\rho_{n - 1}(z,w)/2^{n - 1}}{d_Z(z,w)}\\
  &\ \ \ \ \ \ \ \ + \frac{2b(1/2^{n - 1} - 1/2^{m - 2})}{d'(z,w)} \frac{d'(z,w)}{d_Z(z,w)} + \frac{\rho_{m - 1}(z,w)/2^{m - 1}}{d_Z(z,w)}\\
  &\leq \frac{4b(\lipc(d) + \lipc(e))(1 - 1/2^{n - 2})}{a(1 - 1/2^{n - 1} - 1/2^{m - 1})} + \frac{2^{n + 1}b(\lipc(d) + \lipc(e))/2^{n - 1}}{a}\\
  &\ \ \ \ \ \ \ \ + \frac{2b(\lipc(d) + \lipc(e))(1/2^{n - 1} - 1/2^{m - 2})}{a(1 - 1/2^{n - 1} - 1/2^{m - 1})} + \frac{2^{m + 1}b(\lipc(d) + \lipc(e))/2^{m - 1}}{a}\\
  &\leq \frac{12b(\lipc(d) + \lipc(e))}{a}.
 \end{align*}
\end{enumerate}
It follows that
 $$\lipc(\rho) \leq \lipc(e) + \lipc\Bigg(\sum_{n = 1}^\infty \frac{\rho_n}{2^n}\Bigg) \leq \lipc(e) + \frac{24b(\lipc(d) + \lipc(e))}{a} < \infty.$$
We complete the proof.
\end{proof}

Given a space $Z$, denote the hyperspace consisting of non-empty finite subsets of $Z$ with cardinality $\leq 2$ by $\fin_2(Z)$,
 and denote the subspace consisting of doubletons by $\doubl(Z)$,
 which are equipped with the Vietoris topology.
For each $\{x,y\} \in \fin_2(Z)$ and each $d \in \metr(Z)$, we define the following subsets in $\metr(Z)$ and $\fin_2(Z)$:
 $$\mathcal{P}(Z,\{x,y\}) = \{\rho \in \metr(Z) \mid \rho(x,y) = \|\rho\|\},$$
 $$\mathcal{F}(Z,d) = \{\{z,w\} \in \fin_2(Z) \mid d(z,w) = \|d\|\}.$$
In the rest of this section, we will assume that
 $$T({\rm P}(X) \cap {\rm M}(X)) = {\rm P}(Y) \cap {\rm M}(Y).$$

\begin{lemma}\label{f.i.p.}
For every $\{x,y\} \in \fin_2(X)$ and every $\{z,w\} \in \fin_2(Y)$, the both of families
 $$\{\mathcal{F}(Y,T(d)) \mid d \in \mathcal{P}(X,\{x,y\}) \cap {\rm P}(X)\} \text{ and } \{\mathcal{F}(X,T^{-1}(\rho)) \mid \rho \in \mathcal{P}(Y,\{z,w\}) \cap {\rm P}(Y)\}$$
 have the finite intersection property.
\end{lemma}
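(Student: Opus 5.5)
By symmetry (apply the same argument to $T^{-1}$, which satisfies the same hypotheses) it suffices to treat the first family. Fix $\{x,y\} \in \fin_2(X)$ and finitely many $d_1,\dots,d_n \in \mathcal{P}(X,\{x,y\}) \cap {\rm P}(X)$. We want to show $\bigcap_{i} \mathcal{F}(Y,T(d_i)) \neq \emptyset$. The natural candidate witnessing the intersection is the image of the sum. Put $d = \sum_{i=1}^n d_i$ in the cones $\upm$, $\lpm$, and $d = \frac{1}{n}\sum_{i=1}^n d_i$ in the convex set $\lpm_k$. In every case $d \in \metr(X)$. Since each $d_i$ attains its norm at $(x,y)$, we get $d(x,y) = \sum_i \|d_i\|$ (resp. divided by $n$). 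This equals the upper bound for $\|d\|$, so $d \in \mathcal{P}(X,\{x,y\})$ and $\|d\| = \sum_i\|d_i\|$ (resp. the average).

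The first step is to check $d \in {\rm P}(X)$, so that $T(d) \in {\rm P}(Y)$ by the standing hypothesis. If $d(z,w) = \|d\|$, then $d_i(z,w) = \|d_i\|$ for every $i$, since each summand is at most its norm. So the norm-attaining set of $d$ lies inside that of $d_1$.
\begin{itemize}
\item For ${\rm Pp}$: uniqueness for $d_1$ forces $\{z,w\} = \{x,y\}$, so $d \in {\rm Pp}(X)$.
\item For ${\rm Pc}$: the compact set $K$ for $d_1$ works for $d$, with doubleton $\{x,y\}$.
\end{itemize}
Hence $T(d) \in {\rm P}(Y) \cap \metr(Y)$, and in particular there is $\{z,w\} \in \fin_2(Y)$ with $T(d)(z,w) = \|T(d)\|$. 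If $\metr(Y)$ contains only pseudometrics whose maximum is attained somewhere, we may also use the ${\rm Pc}$/${\rm Pp}$ definitions directly; either way a maximizing pair exists. (Note that ${\rm P}$ requires a doubleton, so $z \neq w$ when the norm is positive. The case $\|d\| = 0$ is trivial: then all $d_i = \0$, and by Proposition~\ref{norm} $T(d_i) = \0$, whose $\mathcal{F}$ is all of $\fin_2(Y)$.)

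The key step is to show $T(d_i)(z,w) = \|T(d_i)\|$ for each $i$. Here I would use the isometry property together with Propositions~\ref{norm} and~\ref{scalar}. Write $s_i = \|d_i\|$ and $s = \|d\| = \sum_j s_j$ (the case of averages is analogous after rescaling by $1/n$, using Proposition~\ref{scalar}). Since $d - d_i = \sum_{j\neq i} d_j$ is pointwise nonnegative and attains its norm $s - s_i$ at $(x,y)$, we get $\|d - d_i\| = s - s_i$. Hence
\[
\|T(d) - T(d_i)\| = s - s_i .
\]
Then, using Proposition~\ref{norm} for $\|T(d)\| = s$ and $\|T(d_i)\| = s_i$,
\[
T(d_i)(z,w) \geq T(d)(z,w) - (s - s_i) = s - s + s_i = s_i = \|T(d_i)\|.
\]
Since the left side is always at most $\|T(d_i)\|$, we conclude $T(d_i)(z,w) = \|T(d_i)\|$. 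Thus $\{z,w\} \in \bigcap_i \mathcal{F}(Y,T(d_i))$.

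In the $\lpm_k$ case I would rescale the averaged version accordingly: $\|d - \frac1n d_i\|$ is handled by noting $\frac{1}{n} d_i \in \lpm_k$ and $T(\frac1n d_i) = \frac1n T(d_i)$ by Proposition~\ref{scalar}.

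The main obstacle I expect is purely bookkeeping: confirming that $d$ indeed lies in ${\rm P}(X) \cap \metr(X)$, especially the Lipschitz bound in $\lpm_k$, where averaging keeps $\lipc < k$ by convexity. I would also check that a maximizing doubleton for $T(d)$ exists, which is supplied precisely by $T(d) \in {\rm P}(Y)$.
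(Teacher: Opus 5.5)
Your proposal is correct and follows essentially the same route as the paper. You form the sum (the average in the $\lpm_k$ case) $d$, take a norm-attaining doubleton for $T(d)$, and compare $T(d)$ with $T(d_i)$ (resp.\ with $T(d_i/n) = T(d_i)/n$ via Proposition~\ref{scalar}) using the isometry property and Proposition~\ref{norm}. Your explicit check that $d \in {\rm P}(X)$, so that $T(d)$ actually attains its norm, and your separate treatment of the case $\|d\| = 0$ fill in details the paper leaves implicit in its appeal to ``the assumption''.
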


\begin{proof}
It is sufficient to show that for any $n \in \N$ and any $d_i \in \mathcal{P}(X,\{x,y\}) \cap {\rm P}(X)$, $1 \leq i \leq n$,
 $$\bigcap_{i = 1}^n \mathcal{F}(Y,T(d_i)) \neq \emptyset.$$
In the case where $\metr(X) = \lpm_k(X,d_X)$ and $\metr(Y) = \lpm_k(Y,d_Y)$, let $d = \sum_{i = 1}^n d_i/n$,
 so due to the same argument as Lemma~3.3 of \cite{Kos29}, $d \in \mathcal{P}(X,\{x,y\})$,
 and moreover by Proposition~\ref{norm} and the assumption, there is $\{z,w\} \in \fin_2(Y)$ such that
 $$T(d)(z,w) = \|T(d)\| = \|d\| = d(x,y).$$
We will prove that $\{z,w\} \in \mathcal{F}(Y,T(d_i))$ for each $i \in \{1, \ldots, n\}$.
Put $\rho_i = d - d_i/n$,
 so it also satisfies that $\rho_i(x,y) = \|\rho_i\|$ like the above.
Since $T$ is isometric,
 $$T(d)(z,w) - T\bigg(\frac{d_i}{n}\bigg)(z,w) \leq \bigg\|T(d) - T\bigg(\frac{d_i}{n}\bigg)\bigg\| = \bigg\|d - \frac{d_i}{n}\bigg\| = \|\rho_i\| = \rho_i(x,y).$$
Using Proposition~\ref{norm}, we get that
\begin{align*}
 \bigg\|T\bigg(\frac{d_i}{n}\bigg)\bigg\| &= \bigg\|\frac{d_i}{n}\bigg\| = \frac{\|d_i\|}{n} = \frac{d_i(x,y)}{n} = d(x,y) - \rho_i(x,y) = T(d)(z,w) - \rho_i(x,y)\\
 &\leq T\bigg(\frac{d_i}{n}\bigg)(z,w) \leq \bigg\|T\bigg(\frac{d_i}{n}\bigg)\bigg\|.
\end{align*}
Due to Proposition~\ref{scalar}, $T(d_i/n) = T(d_i)/n$,
 and hence $T(d_i)(z,w) = \|T(d_i)\|$.
Consequently, $\{z,w\} \in \mathcal{F}(Y,T(d_i))$.
\end{proof}

There exists a bijection between $\doubl(Y)$ and $\doubl(X)$,
 that is compatible with the isometry $T$.

\begin{proposition}\label{eq.}
There is a bijection $\Phi : \doubl(Y) \to \doubl(X)$ such that for all $d \in \metr(X)$ and all $\{x,y\} \in \doubl(Y)$,
 $$T(d)(x,y) = d(\Phi(\{x,y\})),$$
 where if $\Phi(\{x,y\}) = \{z,w\} \in \doubl(X)$,
 then $d(\Phi\{x,y\}) = d(z,w)$.
\end{proposition}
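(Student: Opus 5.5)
We first build, for each doubleton $\{x,y\}\in\doubl(X)$, a doubleton of $Y$ attached to it, and then show that this assignment is a bijection compatible with $T$.

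\emph{Step 1: the doubleton attached to $\{x,y\}$.} Each $d\in\mathcal{P}(X,\{x,y\})\cap{\rm P}(X)$ satisfies $T(d)\in{\rm P}(Y)$ by the assumption. The sets $\mathcal{F}(Y,T(d))$ are therefore non-empty sets of doubletons, and each is contained in a compact part of $Y$, or is a single doubleton when ${\rm P}={\rm Pp}$. By Lemma~\ref{f.i.p.}, this family has the finite intersection property.

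We regard $\mathcal{F}(Y,T(d))$ as closed subsets of the compact hyperspace $\fin_2(K)$, where $K$ is the compact set supplied by the definition of ${\rm Pc}$ for one fixed member $T(d_0)$. Compactness then gives a common element $\{z,w\}$ of all these sets. We must check that $\{z,w\}$ is a genuine doubleton. If some $T(d)$ with $d\neq\0$ attains its norm on $\{z,w\}$, then $\|T(d)\|=T(d)(z,w)>0$, which forces $z\neq w$.

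\emph{Step 2: uniqueness and the definition of $\Phi$.} We next show that this common doubleton is unique. This is where Lemma~\ref{transl.} is used. Apply it to $d=\0$ and the pair $(x,y)$; this yields $\rho\in\metr(X)$ whose norm is attained exactly at $\{x,y\}$, so $\rho\in{\rm Pp}(X)\subset{\rm P}(X)$. For the $\lpm_k$ case, first rescale $\rho$ by Proposition~\ref{scalar} so that it lies in $\lpm_k$.

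Suppose $\{z,w\}$ and $\{z',w'\}$ are two common doubletons. We aim to produce, symmetrically on the $Y$ side, a peaking metric at $\{z,w\}$ whose preimage $T^{-1}(\cdot)$ peaks at $\{x,y\}$. Combining it with $T(\rho)$ through the argument of Lemma~\ref{f.i.p.} (averaging and using norm additivity at common peak points) should force $\{z',w'\}=\{z,w\}$. Define $\Psi(\{x,y\})=\{z,w\}$, and construct the analogous map in the reverse direction using $T^{-1}$. Composing the two via the second family in Lemma~\ref{f.i.p.} shows they are mutually inverse. We set $\Phi=\Psi^{-1}$.

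\emph{Step 3: the identity $T(d)(x,y)=d(\Phi(\{x,y\}))$.} Let $\Phi(\{x,y\})=\{z,w\}$ and take an arbitrary $d\in\metr(X)$. Apply Lemma~\ref{transl.} to $d$ and $(z,w)$ to obtain $\rho$ such that $d+\rho$ and $\rho$ both peak uniquely at $\{z,w\}$, with $\rho(z,w)=\|\rho\|$. In the $\lpm_k$ case, first scale $d$ and $\rho$ down by a factor $t$ so that $t(d+\rho)\in\lpm_k$; Proposition~\ref{scalar} lets us undo the scaling afterwards.

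By the construction of $\Phi$, both $T(\rho)$ and $T(d+\rho)$ attain their norms at $\{x,y\}$. Hence
\[
T(d+\rho)(x,y)=\|d+\rho\|=d(z,w)+\rho(z,w)\quad\text{and}\quad T(\rho)(x,y)=\rho(z,w).
\]
The isometry gives
\[
T(d+\rho)(x,y)-T(\rho)(x,y)\le\|d\|,
\]
which is not yet enough, so we need a second estimate. We use the isometry of $T$ on the pair $d+\rho$ and $\rho$, together with the analogue from \cite{Kos29}. Consider the pair $T(d)$ and $T(d+\rho)$: the distance $\|\rho\|=\rho(z,w)$ bounds
\[
T(d+\rho)(x,y)-T(d)(x,y)\le\|\rho\|,
\]
which gives $T(d)(x,y)\ge d(z,w)$. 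For the reverse inequality we apply the same reasoning with the roles swapped, using a peaking $\sigma$ on the $Y$ side at $\{x,y\}$ relative to $T(d)$ and pulling back with $T^{-1}$.

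The main obstacle is Step 2, proving that the common doubleton is unique and that the two assignments are inverse to each other. The inequality manipulation in Step 3 is routine once the peaking metrics are available. In the $\lpm_k$ case, the extra difficulty is the scaling needed to keep every auxiliary metric inside $\lpm_k$.
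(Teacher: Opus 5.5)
Your proposal is correct and follows the paper's route. $\Phi$ comes from the common peak doubletons of the families in Lemma~\ref{f.i.p.}; the paper only cites \cite{Kos29} for this, and your averaging idea in Step~2 does close the uniqueness. Indeed, $\tau=\frac{1}{2}(\sigma+T(\rho))$, with $\sigma$ uniquely peaking at $\{z,w\}$, peaks uniquely there. Then $\|T^{-1}(\tau)-\rho/2\|=\|\sigma\|/2$ together with $\|T^{-1}(\tau)\|=\|\tau\|$ forces every peak of $T^{-1}(\tau)$ to be a peak of $\rho$, hence $\{x,y\}$. Your Step~3 is the paper's peaking estimate, run on the $X$ side for $\geq$ instead of on the $Y$ side through $T^{-1}$ for $\leq$.

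One small correction: Lemma~\ref{transl.} only gives $\rho(z,w)=\|\rho\|$, not a unique peak for $\rho$. So $\rho\in{\rm P}(X)$, and hence $T(\rho)(x,y)=\rho(z,w)$, is not guaranteed, but your inequality never uses it.
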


\begin{proof}
According to the same argument as \cite[Lemmas~3.6 and 3.7, and Proposition~3.8]{Kos29}, applying Theorem~\ref{ext.}, Lemmas~\ref{f.i.p.} and \ref{transl.}, for each $\{x,y\} \in \doubl(Y)$, we can define $\Phi$ by
 $$\{\Phi(\{x,y\})\} = \bigcap_{\rho \in \mathcal{P}(Y,\{x,y\}) \cap {\rm P}(Y)} \mathcal{F}(X,T^{-1}(\rho)),$$
 which is the desired bijection.
Then for all $\{z,w\} \in \doubl(X)$ and all $\{x,y\} \in \doubl(Y)$,
 $$T(\mathcal{P}(X,\{z,w\}) \cap {\rm P}(X)) = \mathcal{P}(Y,\Phi^{-1}(\{z,w\})) \cap {\rm P}(Y),$$
 $$T^{-1}(\mathcal{P}(Y,\{x,y\}) \cap {\rm P}(Y)) = \mathcal{P}(X,\Phi(\{x,y\})) \cap {\rm P}(X).$$

We shall show the equality in the case that $\metr(X) = \lpm_k(X,d_X)$ and $\metr(Y) = \lpm_k(Y,d_Y)$.
To prove that $T(d)(x,y) \leq d(\Phi(\{x,y\}))$, choose $\rho \in \lpm(Y,d_Y)$ as in Lemma~\ref{transl.} so that $\rho(x,y) = \|\rho\|$ and
 $$\frac{1}{\lambda}(T(d) + \rho) \in \mathcal{P}(Y,\{x,y\}) \cap {\rm P}(Y)$$
 for some $\lambda \geq 1$.
Since
 $$T^{-1}(\mathcal{P}(Y,\{x,y\}) \cap {\rm P}(Y)) = \mathcal{P}(X,\Phi(\{x,y\})) \cap {\rm P}(X),$$
 we get that
 $$T^{-1}\bigg(\frac{1}{\lambda}(T(d) + \rho)\bigg) \in \mathcal{P}(X,\Phi(\{x,y\})).$$
Observe that by Proposition~\ref{norm},
\begin{align*}
 \frac{1}{\lambda}(T(d)(x,y) + \rho(x,y)) &= \frac{1}{\lambda}(T(d) + \rho)(x,y) = \bigg\|\frac{1}{\lambda}(T(d) + \rho)\bigg\| = \bigg\|T^{-1}\bigg(\frac{1}{\lambda}(T(d) + \rho)\bigg)\bigg\|\\
 &= T^{-1}\bigg(\frac{1}{\lambda}(T(d) + \rho)\bigg)(\Phi(\{x,y\})).
\end{align*}
Since $T$ is an isometry and scalar-preserving due to Proposition~\ref{scalar},
 \begin{align*}
  &\ \ \ \ \frac{1}{\lambda}(T(d)(x,y) + \rho(x,y)) - \frac{1}{\lambda} d(\Phi(\{x,y\}))\\
  &= T^{-1}\bigg(\frac{1}{\lambda}(T(d) + \rho)\bigg)(\Phi(\{x,y\})) - \frac{1}{\lambda} d(\Phi(\{x,y\}))\\
  &\leq \bigg\|T^{-1}\bigg(\frac{1}{\lambda}(T(d) + \rho)\bigg) - \frac{1}{\lambda} d\bigg\| = \bigg\|T^{-1}\bigg(\frac{1}{\lambda}(T(d) + \rho)\bigg) - T^{-1}\bigg(T\bigg(\frac{1}{\lambda} d\bigg)\bigg)\bigg\|\\
  &= \bigg\|\frac{1}{\lambda}(T(d) + \rho) - T\bigg(\frac{1}{\lambda} d\bigg)\bigg\|= \bigg\|\frac{1}{\lambda}(T(d) + \rho) - \frac{1}{\lambda} T(d)\bigg\| = \frac{1}{\lambda}\|\rho\| = \frac{1}{\lambda}\rho(x,y).
 \end{align*}
Hence $T(d)(x,y) \leq d(\Phi(\{x,y\}))$.
The converse inequality follows from the same way as the above.
Consequently, $T(d)(x,y) = d(\Phi(\{x,y\}))$.
\end{proof}

We are now in a position to prove that the isometry $T$ is a composition operator by some homeomorphism from $Y$ to $X$.

\begin{proposition}\label{corresp.}
There exists a homeomorphism $\phi : Y \to X$ such that
 $$T(d)(x,y) = d(\phi(x),\phi(y))$$
 holds for any $d \in \metr(X)$ for any $\{x,y\} \in \doubl(Y)$.
Except for the case where the cardinality of $X$ or $Y$ is equal to $2$, the homeomorphism $\phi$ can be chosen uniquely.
\end{proposition}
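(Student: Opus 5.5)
Starting from the bijection $\Phi : \doubl(Y) \to \doubl(X)$ of Proposition~\ref{eq.}, the plan is to show that $\Phi$ carries doubletons sharing a point to doubletons sharing a point. This lets us read off a point map $\phi$. We then check that $\phi$ is a homeomorphism.

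\emph{Combinatorial step.} Assume first that $Y$ (equivalently $X$) has at least three points; the case of cardinality $2$ is trivial, since any bijection of a two-point set induces the unique doubleton map. For distinct $x,y,z \in Y$ we claim that $\Phi(\{x,y\}) \cap \Phi(\{x,z\}) \neq \emptyset$. Suppose instead that $\Phi(\{x,y\}) = \{a,b\}$ and $\Phi(\{x,z\}) = \{c,e\}$ are disjoint. Using Theorem~\ref{ext.} on the finite closed set $\{a,b,c,e\}$, we build $d \in \metr(X)$ with $d(a,b)$ and $d(c,e)$ chosen so that the triangle inequality for $T(d)$ at the triple $x,y,z$ fails. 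For instance, take $d(a,b)$ large, $d(c,e)$ small, and $d(\Phi(\{y,z\}))$ small. Since $T(d)(x,y) = d(\Phi(\{x,y\}))$, this contradicts the fact that $T(d)$ is a pseudometric. The main obstacle is arranging these prescribed values simultaneously when $\Phi(\{y,z\})$ may overlap $\{a,b\}$ or $\{c,e\}$. In that case we use Theorem~\ref{ext.} together with an extra small uniformly continuous (or lipschitz, scaled so that $\lipc < k$) metric to keep the other values positive but small. Alternatively, we follow \cite[Proposition~4.1]{Kos29} verbatim, with Theorem~\ref{ext.} replacing the Hausdorff extension theorem. Applying the same argument to $T^{-1}$ and $\Phi^{-1}$ shows that doubletons meeting in $X$ come from doubletons meeting in $Y$.

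\emph{Defining $\phi$.} With at least three points, we show that for each $x \in Y$ the doubletons $\Phi(\{x,y\})$, $y \neq x$, share a common point. Any two of them intersect. If three of them, $\{p,q\}$, $\{p,r\}$, $\{q,r\}$, formed a triangle with no common point, apply $\Phi^{-1}$. The images are the doubletons $\{x,y_1\}$, $\{x,y_2\}$, $\{x,y_3\}$, which pairwise meet only at $x$. But $\{p,q\}$ and $\{q,r\}$ share $q$, while $\{p,q\} \cap \{p,r\} = \{p\}$. We then compare $\Phi(\{y_1,y_2\})$, which must meet both $\{p,q\}$ and $\{p,r\}$ by the combinatorial step, with the analogous doubletons, to get the contradiction, as in \cite{Kos29}. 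Let $\phi(x)$ be that common point. Injectivity and surjectivity follow from the same argument applied to $\Phi^{-1}$. We also get $\Phi(\{x,y\}) = \{\phi(x),\phi(y)\}$, so $T(d)(x,y) = d(\phi(x),\phi(y))$. Uniqueness holds because $\phi$ is determined by $\Phi$ via these common points, and $\Phi$ is determined by $T$: using Lemma~\ref{transl.}, doubletons are separated by elements of $\metr(X)$.

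\emph{Continuity.} If $y_\lambda \to y$ in $Y$ but $\phi(y_\lambda) \not\to \phi(y)$, pass to a subnet staying outside a neighbourhood $U$ of $\phi(y)$. Choose $d \in \metr(X)$ with $d(\phi(y),w) \geq 1$ for all $w \notin U$. This is possible by Theorem~\ref{ext.} applied to $\{\phi(y)\} \cup (X \setminus V)$ for a smaller uniform neighbourhood $V$; in the lipschitz case we use $\min\{d_X(\phi(y),\cdot)\text{-type metric},1\}$ scaled below $k$. Then $T(d)(y_\lambda,y) \geq 1$, contradicting the continuity of $T(d) \in \pseudo(Y)$. Symmetrically, $\phi^{-1}$ is continuous using $T^{-1}$, so $\phi$ is a homeomorphism.
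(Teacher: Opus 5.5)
Your outline follows the paper's proof. The shared steps are: a triangle-inequality contradiction forcing $\Phi(\{x,y\})\cap\Phi(\{x,z\})\neq\emptyset$, a common point of the star at each point defining $\phi$, a pseudometric separating $\phi(y)$ from $X\setminus U$ for continuity, and separation of doubletons for uniqueness. However, your step excluding a triangle does not work as described.

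Your combinatorial step only records \emph{whether} two image doubletons meet, not where. So the observation that $\{p,q\},\{p,r\},\{q,r\}$ meet at different points while $\{x,y_i\}$ all meet at $x$ is not a contradiction. A triangle can occur only when $Y$ has exactly four points: with a fifth point, $\Phi(\{x,y_4\})$ would have to meet all three triangle edges and hence equal one of them, contradicting injectivity. In that four-point case, intersection bookkeeping can never succeed. Consider the assignment $\{x,y_1\}\mapsto\{p,q\}$, $\{x,y_2\}\mapsto\{p,r\}$, $\{x,y_3\}\mapsto\{q,r\}$, $\{y_1,y_2\}\mapsto\{p,s\}$, $\{y_1,y_3\}\mapsto\{q,s\}$, $\{y_2,y_3\}\mapsto\{r,s\}$. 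It preserves and reflects intersection in both directions; it is an automorphism of the line graph of $K_4$ that exchanges stars and triangles and is not induced by any point bijection. Comparing $\Phi(\{y_1,y_2\})$ with the analogous doubletons, as you propose, just reproduces this consistent configuration.

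What is missing is a second use of the triangle inequality for $T(d)$; this is what the paper's ``Similarly, \dots\ we can deduce a contradiction'' refers to. By your combinatorial step, $\Phi(\{y_1,y_2\})=\{p,s\}$ with $s\notin\{p,q,r\}$. Let $d$ be the metric on $\{p,q,r,s\}$ equal to $1$ on pairs from $\{p,q,r\}$ and $3$ on pairs containing $s$. Rescale by $1/t$ and extend by Theorem~\ref{ext.}. Then $T(d/t)(y_1,y_2)=3/t>2/t=T(d/t)(y_1,x)+T(d/t)(x,y_2)$, which is impossible. With this inserted, the rest of your argument goes through: the definition and bijectivity of $\phi$, continuity via nets, and uniqueness via separation of doubletons.

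Incidentally, the ``main obstacle'' you flag in the first step is not a real obstacle. Because $\{a,b\}\cap\{c,e\}=\emptyset$ and $\Phi(\{y,z\})\neq\{a,b\}$, the two pairs you want small can never chain $a$ to $b$. So the prescribed values are always realised by a path metric on the finite set, and no auxiliary metric is needed.
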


\begin{proof}
We will prove the case that $\metr(X) = \lpm_k(X,d_X)$ and $\metr(Y) = \lpm_k(Y,d_Y)$.
Let $\Phi : \doubl(Y) \to \doubl(X)$ be a bijection as in Proposition~\ref{eq.}.
It remains to show that there is a homeomorphism $\phi : Y \to X$ such that $\Phi(\{x,y\}) = \{\phi(x),\phi(y)\}$ for every doubleton $\{x,y\} \in \doubl(Y)$ and $\Phi^{-1}(\{z,w\}) = \{\phi^{-1}(z),\phi^{-1}(w)\}$ for every doubleton $\{z,w\} \in \doubl(X)$.
Remark that the cardinality of $X$ is coincident with the one of $Y$,
 and that the case where their cardinality is equal to $2$ is obviously valid.
Assume that the cardinality of $X$ and $Y$ is greater than $2$.
For each $y \in Y$, we will verify that the intersection $\bigcap_{z \in Y \setminus \{y\}} \Phi(\{y,z\})$ is a singleton.
Fixing distinct points $z_1$ and $z_2$ in $Y \setminus \{y\}$ arbitrarily, since $\Phi : \doubl(Y) \to \doubl(X)$ is injective,
 we have that $\Phi(\{y,z_1\}) \cap \Phi(\{y,z_2\})$ is empty or degenerate.
Supposing that $\Phi(\{y,z_1\})$ does not meet $\Phi(\{y,z_2\})$, we can find a pseudometric $d$ on the finite subset
 $$A = \Phi(\{y,z_1\}) \cup \Phi(\{y,z_2\}) \cup \Phi(\{z_1,z_2\})$$
 such that $d(\Phi(\{y,z_1\})) = d(\Phi(\{y,z_2\})) = 1$ and $d(\Phi(\{z_1,z_2\})) = 3$.
Take a sufficient large number $t \geq 1$ so that $d/t \in \lpm_k(A,d_X|_{A^2})$,
 which can be extended to $\lpm_k(X,d_X)$ by Theorem~\ref{ext.}.
According to Proposition~\ref{eq.},
\begin{align*}
 \frac{3}{t} &= \frac{d(\Phi(\{z_1,z_2\}))}{t} = T\bigg(\frac{d}{t}\bigg)(z_1,z_2)\\
 &\leq T\bigg(\frac{d}{t}\bigg)(y,z_1) + T\bigg(\frac{d}{t}\bigg)(y,z_2) = \frac{d(\Phi(\{y,z_1\}))}{t} + \frac{d(\Phi(\{y,z_2\}))}{t} = \frac{2}{t},
\end{align*}
 which is a contradiction.
Therefore $\Phi(\{y,z_1\}) \cap \Phi(\{y,z_2\}) = \{x\}$ for some point $x \in X$.
Similarly, suppose that there exists $z \in Y \setminus \{y\}$ such that $\Phi(\{y,z\})$ does not contain the point $x$,
 so we can deduce a contradiction.
Thus
 $$\{x\} = \bigcap_{z \in Y \setminus \{y\}} \Phi(\{y,z\}),$$
 so we can define a map $\phi : Y \to X$ by $\phi(y) = x$.
Similarly, we can obtain a map $\psi : X \to Y$ for the inverse $\Phi^{-1}$ satisfying that
 $$\{\psi(x)\} = \bigcap_{z \in X \setminus \{x\}} \Phi^{-1}(\{x,z\}).$$
As is easily observed,
 $\phi$ is a bijection and $\psi = \phi^{-1}$,
 and moreover, $\Phi(\{x,y\}) = \{\phi(x),\phi(y)\}$ and $\Phi^{-1}(\{z,w\}) = \{\phi^{-1}(z),\phi^{-1}(w)\}$.

We shall verify that $\phi$ is a homeomorphism.
For any point $y \in Y$ and any open neighborhood $U$ of $\phi(y)$ in $X$, there is a pseudometric $d \in \pseudo(\{\phi(y)\} \cup (X \setminus U))$ such that $d(\phi(y),x) = 2$ if $x \in X \setminus U$,
 and $d(x,x') = 0$ if $x, x' \in X \setminus U$.
Observe that $d$ is lipschitz with respect to $d_X|_{(\{\phi(y)\} \cup (X \setminus U))^2}$ such that
 $$\lipc(d) = \frac{2}{d_X(\phi(y),X \setminus U)} < \infty.$$
By Theorem~\ref{ext.}, $d$ can be extended over $X^2$ and $d/t \in \lpm_k(X,d_X)$ for some $t \geq 1$.
Due to Proposition~\ref{eq.}, for each point $z \in B_{T(d/t)}(y,1/t) \setminus \{y\}$,
 $$\frac{d(\phi(y),\phi(z))}{t} = \frac{d(\Phi(\{y,z\}))}{t} = T\bigg(\frac{d}{t}\bigg)(y,z) < \frac{1}{t},$$
 which means that $\phi(z) \in U$.
Hence $\phi$ is continuous.
Similarly, the inverse $\phi^{-1}$ is also continuous.

It remains to prove the uniqueness of $\phi$.
Take any homeomorphism $\psi : Y \to X$ such that for any $d \in \pseudo(X)$ and for any $x, y \in Y$, $T(d)(x,y) = d(\psi(x),\psi(y))$.
Assume that there is a point $x \in Y$ such that $\phi(x) \neq \psi(x)$.
For a finite set
 $$\{\phi(x),\phi(y),\psi(x),\psi(y)\} \subset X,$$
 where $y \in Y \setminus \{x,\phi^{-1}(\psi(x))\}$,
 choose a pseudometric $\rho$ on it such that $\rho(\phi(x),\phi(y)) = 0$ and $\rho(\psi(x),\psi(y)) = 1$.
Using Theorem~\ref{ext.}, we can extend $\rho$ to a lipschitz continuous pseudometric on the whole space $X$.
Then for sufficient large $t \geq 1$, $\rho/t \in \lpm_k(X,d_X)$,
 and
 $$0 = \frac{\rho(\phi(x),\phi(y))}{t} = T\bigg(\frac{\rho}{t}\bigg)(x,y) = \frac{\rho(\psi(x),\psi(y))}{t} = \frac{1}{t},$$
 which is a contradiction.
We conclude that $\phi = \psi$.
\end{proof}

\section{Proof of Theorems A, B and C}

In this section, we provide proofs of our Theorems A, B and C.

\begin{proposition}\label{oper.}
Let $\psi : Y \to X$ be a homeomorphism,
 and let $S : \pseudo(X) \to \pseudo(Y)$ be a map defined by for all $d \in \pseudo(X)$ and for all $x, y \in Y$,
 $$S(d)(x,y) = d(\psi(x),\psi(y)).$$
Then $S$ is an isometry satisfying that $S(\adm(X)) = \adm(Y)$ and $S({\rm P}(X)) = {\rm P}(Y)$.
Additionally, the following holds:
\begin{enumerate}
 \item If $\psi : (Y,\mathcal{U}_Y) \to (X,\mathcal{U}_X)$ is a uniformly homeomorphism,
 then $S(\upm(X,\mathcal{U}_X)) = \upm(Y,\mathcal{U}_Y)$ and $S({\rm U}(X,\mathcal{U}_X)) = {\rm U}(Y,\mathcal{U}_Y)$.
\end{enumerate}
Moreover, when $X = (X,d_X)$ and $Y = (Y,d_Y)$ are metric spaces,
 the following are valid:
\begin{enumerate}
\setcounter{enumi}{1}
 \item If $\psi : (Y,d_Y) \to (X,d_X)$ is bi-lipschitz,
 then $S(\lpm(X,d_X)) = \lpm(Y,d_Y)$.
 \item If $\psi : (Y,d_Y) \to (X,d_X)$ is an isometry and $k > 0$,
 then $S(\overline{\lpm}_k(X,d_X)) = \overline{\lpm}_k(Y,d_Y)$ and $S(\lpm_k(X,d_X)) = \lpm_k(Y,d_Y)$.
\end{enumerate}
\end{proposition}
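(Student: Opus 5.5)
The plan is to verify everything directly from the definitions, using that $S$ is induced by the homeomorphism $\psi$. Its inverse is the analogous map $S'$ induced by $\psi^{-1}$, given by $S'(\rho)(z,w) = \rho(\psi^{-1}(z),\psi^{-1}(w))$.

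First, $S(d)$ is a pseudometric on $Y$. It is bounded and continuous on $Y^2$, being the composition of $d$ with the continuous map $\psi \times \psi$. Since $\psi \times \psi : Y^2 \to X^2$ is a bijection, we get $\|S(d) - S(d')\| = \sup_{Y^2}|d(\psi\times\psi) - d'(\psi\times\psi)| = \|d - d'\|$. Also $S \circ S' = \mathrm{id}$ and $S' \circ S = \mathrm{id}$, so $S$ is a surjective isometry.

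For $\adm$, note that $S(d)$ is a metric exactly when $d$ is, by injectivity of $\psi$. Moreover, $\psi$ is an isometry from $(Y,S(d))$ onto $(X,d)$, so $S(d)$ generates the topology of $Y$ iff $d$ generates that of $X$; hence $S(\adm(X)) \subset \adm(Y)$, and applying the same argument to $S'$ gives equality. For ${\rm P}$, the key identity is $\mathcal{F}(Y,S(d)) = \{\psi^{-1}(A) \mid A \in \mathcal{F}(X,d)\}$ for doubletons, together with $\|S(d)\| = \|d\|$. Uniqueness of the maximizing doubleton therefore transfers, which handles ${\rm Pp}$. For ${\rm Pc}$, if $K \subset X$ is a compact set containing all maximizing points, then $\psi^{-1}(K)$ is compact and works for $S(d)$. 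Symmetry via $S'$ again yields equality in both cases.

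For (1), given $\epsilon > 0$ choose $U \in \mathcal{U}_X$ witnessing uniform continuity of $d$. Then $V = (\psi\times\psi)^{-1}(U) \in \mathcal{U}_Y$ by uniform continuity of $\psi$, and $(x,y) \in V$ implies $S(d)(x,y) < \epsilon$. For ${\rm U}$, I use Proposition~\ref{unif.}: uniform homeomorphisms preserve $\uc$ by composition, so $\psi$ is also a uniform homeomorphism with respect to the coarsest uniformities $(\mathcal{U}_Y)_0$ and $(\mathcal{U}_X)_0$ (via the base of Proposition~\ref{base}, as $f \mapsto f\circ\psi$ maps $\uc(X,\mathcal{U}_X)$ onto $\uc(Y,\mathcal{U}_Y)$). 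The previous argument then applies to these uniformities. Alternatively, one can work directly with the definition of ${\rm U}$, replacing the finite set $F$ by $\{f\circ\psi \mid f \in F\}$.

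For (2), if $d_X(\psi(x),\psi(y)) \le L d_Y(x,y)$, then $S(d)(x,y) \le \lipc(d) L d_Y(x,y)$, so $\lipc(S(d)) \le L\lipc(d)$. For (3), with $L = 1$ and $\psi$ an isometry, the ratios $S(d)(x,y)/d_Y(x,y)$ and $d(\psi(x),\psi(y))/d_X(\psi(x),\psi(y))$ coincide over all distinct pairs, so $\lipc(S(d)) = \lipc(d)$. This gives both the strict and the non-strict $k$-classes. In every item, the reverse inclusions follow by applying the same reasoning to $S'$ and $\psi^{-1}$. The only step requiring slight care is the ${\rm U}$ part of (1), namely that $\psi$ respects the coarsest uniformities; the rest is routine.
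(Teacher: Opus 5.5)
Your proposal is correct and essentially matches the paper's proof. For the ${\rm U}$ part of (1), the paper likewise uses the base $\mathcal{B}$ of Proposition~\ref{base} and the sets $F\circ\psi$ to show that $\psi$ is uniformly continuous for $(\mathcal{U}_Y)_0$ and $(\mathcal{U}_X)_0$, then applies Proposition~\ref{unif.}; (2) and (3) use the same Lipschitz-constant estimates, and the only difference is that the paper cites \cite[Lemma~2.4]{IK2} and \cite{Kos29} for the isometry, $\adm$ and ${\rm P}$ claims, which you verify directly and correctly.
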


\begin{proof}
The former holds by \cite[Lemma~2.4]{IK2} and the proof of Main Theorem of \cite{Kos29}.
First, we show (1) of the latter.
It follows from the uniform continuities $\psi$ and $\psi^{-1}$ that $S(\upm(X,\mathcal{U}_X)) = \upm(Y,\mathcal{U}_Y)$.
We will investigate that $S({\rm U}(X,\mathcal{U}_X)) = {\rm U}(Y,\mathcal{U}_Y)$.
Recall that $\psi$ is uniformly continuous with respect to $(\mathcal{U}_Y)_0$ and $(\mathcal{U}_X)_0$.
Indeed, by Proposition~\ref{base}, for each $U \in (\mathcal{U}_X)_0$, there are a finite set $F$ in $\uc(X,\mathcal{U}_X)$ and a positive integer $i \in \N$ such that if $d(F)(x,y) < 2^{-i}$,
 then $(x,y) \in U$.
Let
 $$F \circ \psi = \{f \circ \psi \mid f \in F\} \subset \uc(Y,\mathcal{U}_Y) \text{ and } B = \{(z,w) \in Y^2 \mid d(F \circ \psi)(z,w) < 2^{-i}\}.$$
Observe that $B \in \mathcal{B}_Y \subset (\mathcal{U}_Y)_0$ due to Proposition~\ref{base},
 and that for any $(z,w) \in B$,
 $$d(F)(\psi(z),\psi(w)) = \max\{|f(\psi(z)) - f(\psi(w))| \mid f \in F\} = d(F \circ \psi)(z,w) < 2^{-i},$$
 which implies that $(\psi(z),\psi(w)) \in U$.
Therefore $\psi$ is uniformly continuous with respect to $(\mathcal{U}_Y)_0$ and $(\mathcal{U}_X)_0$.
Fix every $d \in {\rm U}(X,\mathcal{U}_X)$.
Since $d$ is uniformly continuous with respect to $(\mathcal{U}_X)_0$ by Proposition~\ref{unif.} and $\psi$ is uniformly continuous with respect to $(\mathcal{U}_Y)_0$ and $(\mathcal{U}_X)_0$,
 $S(d)$ is uniformly continuous with respect to $(\mathcal{U}_Y)_0$ according to Proposition~8.1.22 of \cite{En},
 that is $S(d) \in {\rm U}(Y,\mathcal{U}_Y)$ by Proposition~\ref{unif.}.
We conclude that $S({\rm U}(X,\mathcal{U}_X)) \subset {\rm U}(Y,\mathcal{U}_Y)$.
By the same way, $S^{-1}({\rm U}(Y,\mathcal{U}_Y)) \subset {\rm U}(X,\mathcal{U}_X)$.

Next, we verify that $S(\lpm(X,d_X)) = \lpm(Y,d_Y)$ in (2).
Since $\psi$ is bi-lipschitz continuous with respect to $d_Y$ and $d_X$,
 there exists $\lambda \geq 1$ such that for all points $x, y \in Y$,
 $$\frac{1}{\lambda} d_Y(x,y) \leq d_X(\psi(x),\psi(y)) \leq \lambda d_Y(x,y).$$
Then for any $d \in \lpm(X,d_X)$ and any $x, y \in Y$ with $x \neq y$,
 $$\frac{S(d)(x,y)}{d_Y(x,y)} = \frac{d(\psi(x),\psi(y))}{d_Y(x,y)} \leq \frac{\lambda d(\psi(x),\psi(y))}{d_X(\psi(x),\psi(y))} \leq \lambda\lipc(d) < \infty.$$
Hence $S(\lpm(X,d_X)) \subset \lpm(Y,d_Y)$.
It follows from the same argument that $S^{-1}(\lpm(Y,d_Y)) \subset \lpm(X,d_X)$.

Third, in the case of (3), since $\psi$ is an isometry,
 for each metric $d \in \overline{\lpm}_k(X,d_X)$,
 \begin{align*}
  \lipc(S(d)) &= \sup_{x, y \in Y \text{ with } x \neq y} \frac{S(d)(x,y)}{d_Y(x,y)} = \sup_{x, y \in Y \text{ with } x \neq y} \frac{d(\psi(x),\psi(y))}{d_Y(x,y)}\\
  &= \sup_{x, y \in Y \text{ with } x \neq y} \frac{d(\psi(x),\psi(y))}{d_X(\psi(x),\psi(y))} = \lipc(d).
 \end{align*}
As a consequence, $S(\overline{\lpm}_k(X,d_X)) = \overline{\lpm}_k(Y,d_Y)$ and $S(\lpm_k(X,d_X)) = \lpm_k(Y,d_Y)$.
\end{proof}

Now we shall show Theorems~A, B and C.

\begin{proof}[Proof of Theorem~A]
First, the implication (1) $\Rightarrow$ (2) follows from (1) of Proposition~\ref{oper.}.
We will show the implication (2) $\Rightarrow$ (1).
It is sufficient to prove that a homeomorphism $\phi : Y \to X$ as in Proposition~\ref{corresp.} is uniformly homeomorphic.
For each $U \in \mathcal{U}_X$, there exists $d \in \upm(X,\mathcal{U}_X)$ such that if $d(x,y) < 1$,
 then $(x,y) \in U$ due to \cite[Corollary~8.1.11]{En}.
Then $T(d) \in \upm(Y,\mathcal{U}_Y)$ and we can choose $V \in \mathcal{U}_Y$ so that for any $(z,w) \in V$, $T(d)(z,w) < 1$,
 and hence by Proposition~\ref{corresp.},
 $$d(\phi(z),\phi(w)) = T(d)(z,w) < 1.$$
Therefore $(\phi(z),\phi(w)) \in U$,
 so $\phi$ is uniformly continuous.
Similarly, $\phi^{-1}$ is also uniformly continuous.

In the latter case, (1) $\Rightarrow$ (3) also follows from (1) of Proposition~\ref{oper.}.
To show the implication (3) $\Rightarrow$ (1), take the Smirnov compactifications $uX = (\overline{X},\overline{(\mathcal{U}_{d_X})_0})$ and $uY = (\overline{Y},\overline{(\mathcal{U}_{d_Y})_0})$ of $X$ and $Y$, respectively.
By virtue of \cite[Lemma~1.4]{Isb1} (cf.~Problem~8.5.6(a) of \cite{En}), we can obtain the extension $e : {\rm U}(X,d_X) \to \upm(\overline{X},\overline{(\mathcal{U}_{d_X})_0})$.
Since $X$ is dense in $uX$,
 the map $e$ is an isometric embedding and preserves the cone structure.
Moreover, for each $d \in \upm(\overline{X},\overline{(\mathcal{U}_{d_X})_0})$, the restriction $d|_{X^2} \in {\rm U}(X,d_X)$ by Proposition~\ref{unif.},
 and hence we can define the inverse $e^{-1}$ of $e$ by $e^{-1}(d) = d|_{X^2}$.
In conclusion, we can identify
 $${\rm U}(X,d_X) = \upm(\overline{X},\overline{(\mathcal{U}_{d_X})_0}) = \pseudo(uX) \text{ and } {\rm U}(Y,d_Y) = \upm(\overline{Y},\overline{(\mathcal{U}_{d_Y})_0}) = \pseudo(uY).\footnote{On the other hand, it is known that for every metric space $Z = (Z,d_Z)$, $\uc(Z,d_Z) = \conti(uZ)$, see \cite[Theorem~2.5]{Woo}.}$$
Since $T$ is identified with the isometry from $\pseudo(uX)$ to $\pseudo(uY)$ and the both of $uX$ and $uY$ are compact,
 we can obtain the uniform homeomorphism $\phi : uY \to uX$ as in Proposition~\ref{corresp.} by the same way as the above implication (2) $\Rightarrow$ (1).
Following the same argument as \cite{Her} (cf.~\cite{Ef} and \cite[Problems~8.5.19.(a) and 8.5.20]{En}), we have that the restriction $\phi|_{Y} : (Y,d_Y) \to (X,d_X)$ is a uniformly homeomorphism.
Furthermore, the canonical formula of $T$ and the uniqueness of $\phi$ hold by Proposition~\ref{corresp.}.
\end{proof}

\begin{proof}[Proof of Theorem~B]
From (2) of Proposition~\ref{oper.}, (1) $\Rightarrow$ (2) holds.
To prove (2) $\Rightarrow$ (1), letting $\phi : Y \to X$ be a homeomorphism as in Proposition~\ref{corresp.} and
 $$\lambda = \max\{\lipc(T(d_X)),\lipc(T^{-1}(d_Y))\},$$
 we have that for any $z, w \in Y$ with $z \neq w$,
 $$\frac{d_X(\phi(z),\phi(w))}{d_Y(z,w)} = \frac{T(d_X)(z,w)}{d_Y(z,w)} \leq \lipc(T(d_X)) \leq \lambda,$$
 and that for any $x, y \in X$ with $x \neq y$,
 $$\frac{d_Y(\phi^{-1}(x),\phi^{-1}(y))}{d_X(x,y)} = \frac{T^{-1}(d_Y)(x,y)}{d_X(x,y)} \leq \lipc(T^{-1}(d_Y)) \leq \lambda.$$
Therefore for every pair $(x,y) \in Y^2$,
 $$\frac{1}{\lambda} d_Y(x,y) \leq d_X(\phi(x),\phi(y)) \leq \lambda d_Y(x,y),$$
 which means that $\phi$ is bi-lipschitz.
By virtue of Proposition~\ref{corresp.}, the canonical formula of $T$ and the uniqueness of $\phi$ are valid.
\end{proof}

\begin{proof}[Proof of Theorem~C]
The implication (1) $\Rightarrow$ (2) follows from (3) of Proposition~\ref{oper.},
 and the implication (2) $\Rightarrow$ (3) is obvious.
To show (3) $\Rightarrow$ (1), suppose that $\phi : Y \to X$ is a homeomorphism as in Proposition~\ref{corresp.}.
Then since $k'd_X$ and $k'd_Y$ are $k'$-lipschitz for each positive number $k' < k$,
 observe that for all $z, w \in Y$ with $z \neq w$,
 $$\frac{k'd_X(\phi(z),\phi(w))}{d_Y(z,w)} = \frac{T(k'd_X)(z,w)}{d_Y(z,w)} \leq \lipc(T(k'd_X)) < k,$$
 and that for all $x, y \in X$ with $x \neq y$,
 $$\frac{k'd_Y(\phi^{-1}(x),\phi^{-1}(y))}{d_X(x,y)} = \frac{T^{-1}(k'd_Y)(x,y)}{d_X(x,y)} \leq \lipc(T^{-1}(k'd_Y)) < k.$$
By the arbitrariness of $k'$, for all points $x, y \in Y$,
 $$d_Y(x,y) \leq d_X(\phi(x),\phi(y)) \leq d_Y(x,y),$$
 so $d_X(\phi(x),\phi(y)) = d_Y(x,y)$.
Consequently, $\phi$ is an isometry.
The canonical formula of $T$ and the uniqueness of $\phi$ follow from Proposition~\ref{corresp.} immediately.
\end{proof}

\section{Spaces of admissible metrics}

This section is devoted to consider Banach-Stone type theorems on spaces of admissible metrics.
Similar to (3) and (4) of Theorem~\ref{isometry}, if $X = (X,d_X)$ and $Y = (Y,d_Y)$ are metric spaces,
 then the following are equivalent to the conditions in Theorem~A:
 \begin{itemize}
  \item[($\star$)] there exists an isometry $T : \upm(X,d_X) \cap \adm(X) \to \upm(Y,d_Y) \cap \adm(Y)$ with $T({\rm P}(X) \cap \upm(X,d_X) \cap \adm(X)) = {\rm P}(Y) \cap \upm(Y,d_X) \cap \adm(Y)$.
  \item[($\star\star$)] there is an isometry $T : {\rm P}(X) \cap \upm(X,d_X) \cap \adm(X) \to {\rm P}(Y) \cap \upm(Y,d_X) \cap \adm(Y)$.
 \end{itemize}
In fact, by virtue of Proposition~\ref{oper.}, we can show that if $X = (X,d_X)$ and $Y = (Y,d_Y)$ are uniformly isomorphic,
 then ($\star$) and ($\star\star$) hold.

When $Z = (Z,d_Z)$ is a metric space,
 the metric $e = \min\{d_Z/d_Z(x,y),1\}$ in Lemma~\ref{transl.} is admissible,
 and hence so are the sums $\rho = e + \sum_{n = 1}^\infty \rho_n/2^n$ and $d + \rho$ by \cite[Lemma~2.1]{IK2}.
This guarantees that the argument in Section~\ref{peak} is valid if ${\rm P}(X)$ and ${\rm P}(Y)$ are replaced with ${\rm P}(X) \cap \adm(X)$ and ${\rm P}(Y) \cap \adm(Y)$, respectively.
Therefore the following is equivalent to the conditions in Theorem~A:
 \begin{itemize}
  \item[(2)'] there is an isometry $T : \upm(X,d_X) \to \upm(Y,d_Y)$ with $T({\rm P}(X) \cap \upm(X,d_X) \cap \adm(X)) = {\rm P}(Y) \cap \upm(Y,d_X) \cap \adm(Y)$.
 \end{itemize}

For every metric space $Z = (Z,d_Z)$, $\adm(Z)$ is a dense subset of $\pseudo(Z)$ (see \cite[Proposition~5]{Kos20}).
Similar to \cite[Proposition~2.4]{Kos29}, we can prove that the subset ${\rm P}(Z) \cap \upm(Z,d_Z) \cap \adm(Z)$ is dense in the complete metric space $\upm(Z,d_Z)$.
Hence the isometries $T : \upm(X,d_X) \cap \adm(X) \to \upm(Y,d_Y) \cap \adm(Y)$ with $T({\rm P}(X) \cap \upm(X,d_X) \cap \adm(X)) = {\rm P}(Y) \cap \upm(Y,d_X) \cap \adm(Y)$ in ($\star$) and $T : {\rm P}(X) \cap \upm(X,d_X) \cap \adm(X) \to {\rm P}(Y) \cap \upm(Y,d_X) \cap \adm(Y)$ in ($\star\star$) are extendable to the isometry $T$ of (2)'.
Consequently, the both of implications ($\star$) $\Rightarrow$ (2)' and ($\star\star$) $\Rightarrow$ (2)' hold.

Furthermore, the following is equivalent to the conditions in Theorem~C:
 \begin{itemize}
  \item[($\star\star\star$)] there exists an isometry $T : \lpm_k(X,d_X) \cap \adm(X) \to \lpm_k(Y,d_Y) \cap \adm(Y)$ such that $T({\rm P}(X) \cap \lpm_k(X,d_X) \cap \adm(X)) = {\rm P}(Y) \cap \lpm_k(Y,d_X) \cap \adm(Y)$,
  and for each $d \in \lpm_k(X,d_X) \cap \adm(X)$ and each $t \in (0,1)$, $T(td) = tT(d)$.
 \end{itemize}
Indeed, it follows from Proposition~\ref{oper.} that the implication (1) $\Rightarrow$ ($\star\star\star$) holds.
Recall that since the isometry as in Proposition~\ref{oper.} is a composition operator by a homeomorphism,
 it is scalar-preserving.
By the same argument as the above, the following is equivalent to the conditions in Theorem~C:
 \begin{itemize}
  \item[(2)''] there exists an isometry $T : \overline{\lpm}_k(X,d_X) \to \overline{\lpm}_k(Y,d_Y)$ with $T(\lpm_k(X,d_X)) = \lpm_k(Y,d_Y)$ and $T({\rm P}(X) \cap \lpm_k(X,d_X) \cap \adm(X)) = {\rm P}(Y) \cap \lpm_k(Y,d_Y) \cap \adm(Y)$.
 \end{itemize}

For every metric space $Z = (Z,d_Z)$, we will verify that $\lpm_k(Z,d_Z) \cap \adm(Z)$ is dense in $\overline{\lpm}_k(Z,d_Z)$.
As is observed in Section~\ref{unif.metr.},
 $\lpm_k(Z,d_Z) $ is a dense set in $\overline{\lpm}_k(Z,d_Z)$.
Thus it remains to check that $\lpm_k(Z,d_Z) \cap \adm(Z)$ is dense in $\lpm_k(Z,d_Z)$.
To prove it, fix any $d \in \lpm_k(Z,d_Z)$ and any $\epsilon > 0$.
Let
 $$\delta = \frac{\min\{\epsilon/\|d_Z\|,k - \lipc(d)\}}{2},$$
 so due to \cite[Lemma~2.1]{IK2}, $d + \delta d_Z \in \adm(Z)$.
For all distinct points $x, y \in Z$,
 $$\frac{d(x,y) + \delta d_Z(x,y)}{d_Z(x,y)} = \frac{d(x,y)}{d_Z(x,y)} + \delta \leq \lipc(d) + \frac{k - \lipc(d)}{2} = \frac{k + \lipc(d)}{2} < k,$$
 which implies that $d + \delta d_Z \in \lpm_k(Z,d_Z)$.
Moreover,
 $$\|d - (d + \delta d_Z)\| = \|\delta d_Z\| = \delta\|d_Z\| \leq \frac{\epsilon}{\|d_Z\|} \cdot \|d_Z\| = \epsilon.$$
It follows that $\lpm_k(Z,d_Z) \cap \adm(Z)$ is dense in a complete metric space $\overline{\lpm}_k(Z,d_Z)$.

Hence we can extend the isometry $T : \lpm_k(X,d_X) \cap \adm(X) \to \lpm_k(Y,d_Y) \cap \adm(Y)$ in ($\star\star\star$) to an isometry $\overline{T} : \overline{\lpm}_k(X,d_X) \to \overline{\lpm}_k(Y,d_Y)$.
Then from the density of $\lpm_k(X,d_X) \cap \adm(X) \subset \overline{\lpm}_k(X,d_X)$, we also deduce that for every $d \in \overline{\lpm}_k(X,d_X)$ and every $0 < t < 1$, $\overline{T}(td) = t\overline{T}(d)$.
Moreover, when $t \geq 1$ and $td \in \overline{\lpm}_k(X,d_X)$,
 $$\frac{1}{t}\overline{T}(td) = \overline{T}(\frac{1}{t} \cdot td) = \overline{T}(d),$$
 that is $\overline{T}(td) = t\overline{T}(d)$.
We shall show that $\overline{T}(\lpm_k(X,d_X)) = \lpm_k(Y,d_Y)$.
For any pseudometric $d \in \lpm_k(X,d_X)$, we can choose a positive number $t > 1$ so that $t\lipc(d) \leq k$.
Then $td \in \overline{\lpm}_k(X,d_X)$ and $\overline{T}(td) \in \overline{\lpm}_k(Y,d_Y)$.
Observe that
 $$t\lipc(\overline{T}(d)) = \lipc(t\overline{T}(d)) = \lipc(\overline{T}(td)) \leq k,$$
 so $\lipc(\overline{T}(d)) \leq k/t < k$.
Therefore $\overline{T}(\lpm_k(X,d_X)) \subset \lpm_k(Y,d_Y)$.
Similarly, $\overline{T}(\lpm_k(X,d_X)) \supset \lpm_k(Y,d_Y)$.
We conclude that ($\star\star\star$) $\Rightarrow$ (2)'' is valid.

\section{Problem}

Given a metric space $Z = (Z,d_Z)$, we consider the space $\lip(Z,d_Z)$ of real-valued bounded lipschitz continuous functions on $Z$ endowed with the following norm:
 $$\|f\|_{lip} = \max\{\|f\|,\lipc(f)\},$$
 where the value
 $$\lipc(f) = \sup_{x, y \in Z \text{ with } x \neq y} \frac{|f(x) - f(y)|}{d_Z(x,y)}$$
 is the lipschitz constant.
N.~Weaver \cite{Wea} (cf.~\cite{Ro,Vas}) established the following Banach-Stone type theorem on it.

\begin{theorem}
Assume that $X = (X,d_X)$ and $Y = (Y,d_Y)$ are $1$-connected\footnote{A metric space $Z = (Z,d_Z)$ is \textit{$1$-connected} provided that it can not be decomposed into two disjoint subsets whose distance is greater than or equal to $1$.} complete metric spaces of diameters less than or equal to $2$.
The following are equivalent:
\begin{enumerate}
 \item $X$ and $Y$ are isometric;
 \item there exists a linear isometry $T : \lip(X,d_X) \to \lip(Y,d_Y)$.
\end{enumerate}
In this case, for each isometry $T : \lip(X,d_X) \to \lip(Y,d_Y)$, there exists an isometry $\phi : Y \to X$ and $\alpha \in \conti(Y)$ with $\alpha(Y) \subset \{1,-1\}$ such that for any $f \in \lip(X,d_X)$ and for any $y \in Y$,
 $$T(f)(y) = \alpha(y)f(\phi(y)).$$
\end{theorem}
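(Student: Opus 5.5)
The plan follows the classical scheme: identify $\lip(Z,d_Z)$ with a space of continuous functions via an isometric embedding, then apply a peaking-function argument to the extreme points of the dual ball.

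\emph{Implication (1)$\Rightarrow$(2).} This is routine. Given an isometry $\phi:Y\to X$, the composition operator $T(f)=f\circ\phi$ preserves both $\|f\|$ and $\lipc(f)$. Hence it is a linear isometry onto $\lip(Y,d_Y)$.

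\emph{Implication (2)$\Rightarrow$(1) and the representation.}

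1. Set $\widetilde Z=Z\sqcup\{(x,y)\in Z^2\mid x\neq y\}$, regarded as a locally compact set, possibly after passing to a compactification such as $\beta\widetilde Z$. Embed $\lip(Z,d_Z)$ linearly and isometrically into $\conti(\widetilde Z)$ by
\[
f\mapsto \tilde f,\qquad \tilde f(z)=f(z),\quad \tilde f(x,y)=\frac{f(x)-f(y)}{d_Z(x,y)}.
\]
Then $\|\tilde f\|=\|f\|_{lip}$.

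2. Conjugate the linear isometry $T$ to a map between these subspaces. The adjoint $T^\ast$ is a bijection between the sets of extreme points of the dual unit balls. By the Arens--Kelley description, these extreme points are of the form $\pm\delta_p$ with $p$ lying in the Choquet boundary of the embedded subspace.

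3. Show that, under the hypotheses, the Choquet boundary contains every point evaluation $\delta_z$ with $z\in Z$. This is where the assumption $\operatorname{diam}\le 2$ enters. Consider the peaking function
\[
f=\max\{0,1-d_Z(z,\cdot)\},
\]
modified if necessary. It satisfies $\|f\|_{lip}=1$ and $f(z)=1$. The point is that $f$ peaks at $z$ in the sup part, and one must check that it does not also peak on the difference-quotient part.

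4. Show that $T^\ast$ maps point evaluations to $\pm$ point evaluations, rather than to $\pm$ difference quotients $\delta_{(x,y)}$. This is the main obstacle. I would try to separate the two kinds of extreme functionals by an intrinsic, isometry-invariant property, for instance via the structure of faces or the sets $\{f\mid \langle f,\mu\rangle=\|f\|_{lip}=1\}$. Here the $1$-connectedness should exclude degenerate situations: if $Z$ split into two pieces at distance $\ge1$, a function equal to $\pm1$ on the two pieces would have Lipschitz constant $\le1$, and the two types of functionals could then be exchanged. I would first try to prove that $\delta_{(x,y)}$ lies in the weak$^\ast$ closure of a face of a specific shape that point evaluations do not have, using Weaver's arguments as a model.

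5. Having obtained $T^\ast\delta_y=\alpha(y)\delta_{\phi(y)}$, read off
\[
T(f)(y)=\alpha(y)f(\phi(y)).
\]
Continuity of $\alpha$ and $\phi$ follows from weak$^\ast$ continuity of $T^\ast$. Since $\alpha(y)=\pm1$ and $\alpha$ is continuous, $\alpha$ is locally constant. By $1$-connectedness, $\phi$ preserves Lipschitz constants, and testing on the functions
\[
f=\min\{d_X(\phi(y_0),\cdot),2\},
\]
which have Lipschitz constant $1$ and norm $\le 2$, gives $d_X(\phi(y),\phi(y'))=d_Y(y,y')$.

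6. Surjectivity of $\phi$ follows by applying the same argument to $T^{-1}$.
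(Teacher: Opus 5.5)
The paper gives no proof of this statement. It is Weaver's theorem \cite{Wea}, quoted in the closing Problem section only as motivation, so your outline has to stand on its own. It has a genuine gap at the heart of the theorem: Step~4, that $T^\ast\delta_y=\pm\delta_x$ with $x\in X$, is not proved at all; you only describe what you would try.

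Moreover, the dichotomy you set up there (point evaluations versus difference quotients $\delta_{(x,y)}$) is the wrong one. Because you must pass to $\beta\widetilde X$, Arens--Kelley only tells you that extreme points of the dual ball are among the functionals $\pm\delta_p$, $p\in\beta\widetilde X$, and many of them live in the remainder:
\begin{itemize}
\item for noncompact $X$ there are extreme evaluations at points of $\beta X\setminus X$;
\item on the difference-quotient side, the extreme points can all be derivation-type limits with $d_X(x,y)\to0$. For $X=[0,1]$ no $\delta_{(x,y)}$ is extreme, since $\delta_{(x,z)}$ is a proper convex combination of $\delta_{(x,y)}$ and $\delta_{(y,z)}$ whenever $y$ lies strictly between $x$ and $z$.
\end{itemize}
A clear symptom is that completeness of $X$ and $Y$ is never used, although it is indispensable. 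Extension to the completion is a linear isometry of $\lip([0,1))$ onto $\lip([0,1])$, while $[0,1)$ and $[0,1]$ are not isometric. So a valid Step~4 must exclude $T^\ast\delta_y$ being an evaluation at a point of the remainder, and must use completeness to do so.

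Step~3 is also incorrect as written. For $f=\max\{0,1-d_Z(z,\cdot)\}$ one has $\tilde f(z,w)=1$ whenever $0<d_Z(z,w)\le1$, so $f$ does not peak at $z$. The claim is nonetheless true, for instance via $1-\tfrac12 d_Z(z,\cdot)$. This function takes values in $[0,1]$ because $\operatorname{diam}Z\le2$, has all difference quotients at most $\tfrac12$, and its extension to $\beta\widetilde Z$ equals $1$ only at $z$.

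Given this, one way to close Step~4 is as follows.
\begin{enumerate}
\item Suppose $\|f\|_{lip}=1$ and $\mu(f)\in\{0,\pm1\}$ for every extreme point $\mu$ of the dual ball. Then $f$ is $\{0,\pm1\}$-valued with $\lipc(f)\le1$, so its level sets are at mutual distance $\ge1$, and $1$-connectedness forces $f=\pm1$. Conversely, $\pm1$ have this property, since the extension of $\tilde 1$ is $\{0,1\}$-valued.
\item Your heuristic needs correcting here. If $Z=A\cup B$ with $d_Z(A,B)\ge1$, it is the indicator of $A$, not a $\pm1$-valued function, that has norm $1$. For $Z=\{a,b\}$ with $d_Z(a,b)=1$, the map $f\mapsto(f(a)-f(b),f(a))$ is a linear isometry whose adjoint sends $\delta_a$ to $\delta_{(a,b)}$.
\item Since $T^\ast$ maps extreme points bijectively onto extreme points, the property in (1) is $T$-invariant, so $T(1)=\pm1$. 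Hence $T^\ast$ carries the extreme points $\mu$ with $|\mu(1)|=1$ onto the corresponding set for $X$. These are the extreme functionals $\pm\delta_p$ with $p$ in the closure of $Y$ in $\beta\widetilde Y$, so this removes every difference-quotient type at once.
\item Among these, the $\delta_x$ with $x\in X$ are exactly those at which the weak$^\ast$ and norm topologies coincide on this set.
  \begin{itemize}
  \item For $x\in X$, use the weak$^\ast$ neighbourhood $\{\nu\mid\nu(f)>\tfrac12\}$ with $f=\max\{0,1-d_X(x,\cdot)/\epsilon\}$.
  \item Conversely, such a $\delta_p$ is a norm limit of some $\delta_{z_n}$ with $z_n\in X$, by weak$^\ast$ density of these evaluations. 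Then $(z_n)$ is Cauchy, and completeness gives $\delta_p=\delta_x$ for $x=\lim z_n$.
  \end{itemize}
  This property is preserved by $T^\ast$, which is a weak$^\ast$-homeomorphism and a norm isometry.
\item Finally, $\|\delta_y-\delta_{y'}\|=\min\{d_Y(y,y'),2\}=d_Y(y,y')$ shows directly that $\phi$ is an isometry.
\end{enumerate}
Your test function in Step~5 has norm up to $2$ and only yields $d_X(\phi(y),\phi(y'))\le2d_Y(y,y')$. By contrast, $1-d_X(\phi(y_0),\cdot)$ has norm $1$ and gives the sharp inequality.
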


As an analogue of the above theorem, can we obtain Banach-Stone type theorems on the space $\lpm(Z,d_Z) = (\lpm(Z,d_Z),\|\cdot\|_{lip})$ and its subspaces?


\begin{thebibliography}{99}
 \bibitem{AF} J.~Araujo and J.J,~Font, \textit{Linear isometries on subalgebras of uniformly continuous functions}, Proc. Edinburgh Math. Soc. (2) \textbf{43} (2000), no.~1, 139--147.
 \bibitem{Banac} S.~Banach, \textit{Th\'{e}orie des op\'{e}rations lin\'{e}aires}, Chelsea, Warsaw, (1932).
 \bibitem{BBST} T.~Banakh, N.~Brodskiy, I.~Stasyuk and E.D.~Tymchatyn, \textit{On continuous extension of uniformly continuous functions and metrics}, Colloq. Math. \textbf{116} (2009), no.~2, 191--202.
 \bibitem{CJV} M.G.~Cabrera-Padilla, A.~Jim\'{e}nez-Vargas and M.~Villegas-Vallecillos, \textit{A survey on isometries between Lipschitz spaces}, Associative and non-associative algebras and applications, Springer Proc. Math. Stat., \textbf{311}, Springer, Cham, (2020), 51--88.
 \bibitem{Ef} V.A.~Efremovi\v{c}, \textit{The geometry of proximity. I}, Mat. Sbornik N.S. \textbf{31/73} (1952), 189--200.
 \bibitem{En} R.~Engelking, General Topology, Revised and Complete Edition, Sigma Ser. in Pure Math., \textbf{6}, Heldermann Verlag, Berlin, (1989).
 \bibitem{FJ} R.J.~Fleming and J.E.~Jamison, Isometries on Banach spaces: function spaces, Chapman \& Hall/CRC Monogr. Surv. Pure Appl. Math., \textbf{129}, Boca Raton, FL, (2003).
 \bibitem{GJ1} M.I.~Garrido and J.~Jaramillo, \textit{Variations on the Banach-Stone theorem}, Extracta Math. \textbf{17} (2002), no.~3, 351--383.
 \bibitem{Hau} F.~Hausdorff, \textit{Erweiterung einer Hom\"{o}omorphie}, Fund. Math. \textbf{16} (1930), 353--360.
 \bibitem{Her} S.~Hern\'{a}ndez, \textit{Uniformly continuous mappings defined by isometries of spaces of bounded uniformly continuous functions}, Houston J. Math. \textbf{29} (2003), no.~1, 149--155.
 \bibitem{HMM} D.~Hirota, I.~Matsuzaki and T.~Miura, \textit{Phase-isometries between the positive cones of the Banach spaces of continuous real-valued functions}, Ann. Funct. Anal. \textbf{15} (2024), no.~4, Paper No.~77, 11pp.
 \bibitem{IK2} Y.~Ishiki and K.~Koshino, \textit{On isometric universality of spaces of metrics}, Topology Appl. \textbf{369} (2025), Paper No.~109394, 15 pp.
 \bibitem{Isb1} J.R.~Isbell, \textit{On finite-dimensional uniform spaces}, Pacific J. Math. \textbf{9} (1959), 107--121.
 \bibitem{Kos20} K.~Koshino, \textit{Recognizing the topologies of spaces of metrics with the topology of uniform convergence}, Bull. Pol. Acad. Sci. Math. \textbf{70} (2022), 165--171.
 \bibitem{Kos29} K.~Koshino, \textit{Isometries between spaces of metrics}, Adv. Oper. Theory \textbf{10} (2025), 83, 16pp.
 \bibitem{MU} S.~Mazur and S. Ulam, \textit{Sur les transformation d'spaces vectoriels norm\'{e}}, C.R. Acad. Sci. Paris \textbf{194} (1932), 946--948.
 \bibitem{Ro} A.K.~Roy, \textit{Extreme points and linear isometries of the Banach space of Lipschitz functions}, Canadian J. Math. \textbf{20} (1968), 1150--1164.
 \bibitem{Sh} M.E.~Shanks, \textit{The space of metrics on a compact metrizable space}, Amer. J. Math. \textbf{66} (1944), 461--469.
 \bibitem{MHSto} M.H.~Stone, \textit{Applications of the theory of Boolean rings to general topology}, Trans. Amer. Math. Soc. \textbf{41} (1937), no.~3, 375--481.
 \bibitem{Vas} M.H.~Vasavada, \textit{Closed Ideals and Linear Isometries of Certain Function Spaces},
Thesis (Ph.D.) The University of Wisconsin - Madison, ProQuest LLC, Ann Arbor, MI, (1969), 92 pp.
 \bibitem{Wea} N.~Weaver, \textit{Isometries of noncompact Lipschitz spaces}, Canad. Math. Bull. \textbf{38} (1995), no.~2, 242--249.
 \bibitem{Woo} R.G.~Woods, \textit{The minimum uniform compactification of a metric space}, Fund. Math. \textbf{147} (1995), no.~1, 39--59.
\end{thebibliography}
\end{document}